\newtheorem{theorem}{Theorem}[section]
\newtheorem{lemma}[theorem]{Lemma}
\newtheorem{problem}[theorem]{Problem}
\newtheorem{proposition}[theorem]{Proposition}
\newtheorem{corollary}[theorem]{Corollary}
\newtheorem{definition}[theorem]{Definition}
\definecolor{OliveGreen}{HTML}{3C8031}
\title{Detection-Recovery Gap for Planted Dense Cycles}
\author[1]{Cheng Mao\thanks{Email: \textit{cheng.mao@math.gatech.edu}.}}
\author[2]{Alexander S.\ Wein\thanks{Email: \textit{aswein@ucdavis.edu}.}}
\author[1]{Shenduo Zhang\thanks{Email: \textit{szhang705@gatech.edu}.}}
\affil[1]{School of Mathematics, Georgia Institute of Technology}
\affil[2]{Department of Mathematics, University of California, Davis}
\date{}
\begin{document}

\maketitle

\begin{abstract}
Planted dense cycles are a type of latent structure that appears in many applications, such as small-world networks in social sciences and sequence assembly in computational biology. We consider a model where a dense cycle with expected bandwidth $n \tau$ and edge density $p$ is planted in an Erd\H{o}s--R\'enyi graph $G(n,q)$. We characterize the computational thresholds for the associated detection and recovery problems for the class of low-degree polynomial algorithms. In particular, a gap exists between the two thresholds in a certain regime of parameters. For example, if $n^{-3/4} \ll \tau \ll n^{-1/2}$ and $p = C q = \Theta(1)$ for a constant $C>1$, the detection problem is computationally easy while the recovery problem is hard for low-degree algorithms.
\end{abstract}

\tableofcontents

\section{Introduction}
\label{sec:intro}

Recovering latent structures in networks is a broad class of problems that are essential both in theory and for applications in the social and biological sciences \cite{watts2004new,barabasi2012science}.
In this work, we study the detection and recovery of a hidden cyclic structure in an observed network, a type of structure found in many real-world applications. 
For example, the celebrated \emph{Watts--Strogatz small-world model} \cite{watts1998collective} assumes that $n$ nodes have latent positions on a circle, and they have stronger connections with their $k$-nearest neighbors and weaker connections with all other nodes. 
Observing such a small-world network, the problem of interest is to recover the relative positions of the nodes---which nodes are $k$-nearest neighbors of each other---and hence the overall structure of the network.
Since its proposal, the Watts--Strogatz model has been used extensively to study, for example, epidemic behavior \cite{moore2000epidemics}, collaboration networks \cite{uzzi2005collaboration}, and brain networks \cite{bassett2006small}. 
More generally, the problem of recovering a one-dimensional embedding of $n$ objects from pairwise similarities between them arises in a wider range of applications, including relative dating in archaeology \cite{robinson1951method}, \emph{de novo} genome assembly in computational biology \cite{lieberman2009comprehensive}, and angular synchronization in tomography \cite{singer2011angular}. 

Despite the vast literature on related models and algorithms, the statistical and computational limits of this problem are not yet well-established in a rigorous framework. 
The information-theoretic thresholds for the Watts--Strogatz model are studied in \cite{cai2017detection}, but the upper bounds achieved by computationally efficient algorithms are far from the information-theoretic thresholds. 
It is unknown whether these statistical-to-computational gaps are inherent, or whether they can be closed by other efficient algorithms.
In the case where the bandwidth $k$ is at most $n^{o(1)}$, sharp characterizations of recovery conditions are given in \cite{bagaria2020hidden,ding2020consistent} under a more general model. 
Moreover, several other algorithms and analyses have been introduced for related models from the perspective of graphon estimation by
\cite{janssen2022reconstruction,natik2021consistency,giraud2021localization}. 
However, none of the previous works have shown computational lower bounds against a class of efficient algorithms. 

Moreover, the Watts--Strogatz small-world model can be seen as modeling a one-dimensional, noisy \emph{random geometric graph} with latent locations on a circle.
Random geometric graphs have long been studied in a variety of scientific fields; see, e.g., \cite{penrose2003random} and a recent survey by \cite{duchemin2022random}.
In particular, detection or testing thresholds for high-dimensional, noiseless random geometric graphs were studied by \cite{bubeck2016testing} and improved by \cite{brennan2020phase,liu2022testing}.
Variants of the model with edge noise have also been studied recently by \cite{liu2021phase,liu2021probabilistic}. Recovery or reconstruction of the latent geometry from a random geometric graph has also been long studied in various models, especially using spectral techniques; see, e.g., \cite{sussman2013consistent,araya2019latent,eldan2022community}.
Despite the vast literature, the discrepancy between detection and recovery thresholds is yet to be understood in a single model.

In this work, we propose a variant of the Watts--Strogatz small-world model, which is a random graph with a \emph{planted dense cycle}, and study the computational complexities of the associated detection and recovery problems in the framework of \emph{low-degree polynomial algorithms}. 
This framework has proven to be successful at probing the computational complexity of detecting and estimating hidden structures in high-dimensional settings \cite{hopkins2017efficient,kunisky2019notes,schramm2020computational} and is closely related to the sum-of-squares hierarchy \cite{sos-hidden,sam-thesis}. 
For problems such as planted clique, community detection, and sparse PCA, the conjectured hard regime where no polynomial-time algorithms are known to exist coincides with the regime where low-degree polynomials fail to solve the problem. 
For the planted dense cycle problem, we identify the regimes where low-degree polynomial algorithms fail to detect and recover the hidden cycle respectively. 
In particular, we show that the threshold for detection is drastically different from that for recovery, so there is a \emph{detection-recovery gap} for this problem.

\paragraph{Notation}
Let $[n] := \{1,2,\dots,n\}$ and $\binom{[n]}{2} := \{(i,j) : i,j \in [n], \, i<j\}$.
We use the standard asymptotic notation $O(\cdot), o(\cdot), \Omega(\cdot), \omega(\cdot)$, $\Theta(\cdot)$ as $n \to \infty$, and a tilde is added if the asymptotic relation holds up to a polylogarithmic factor in $n$.

Any subset $\alpha \subseteq \binom{[n]}{2}$ can be identified with the graph on vertex set $[n]$ induced by edges in $\alpha$.
Therefore, we can say ``graph $\alpha$'' without ambiguity.
Then $|\alpha|$ denotes the number of edges in the graph $\alpha$.
Let $V(\alpha) \subseteq [n]$ denote the vertex set of $\alpha$, i.e., the set of vertices $v \in [n]$ that are non-isolated by the edges of $\alpha$.

\section{Models and main results}
\label{sec:result}

\subsection{Planted dense cycles}
We now formally introduce our models. 
For any $a,b \in [0,1]$, define 
$$
\dist(a,b) := \min\{|a-b|, 1-|a-b|\} .
$$
In other words, $\dist(a,b)$ is the distance between $a$ and $b$ on a circle of circumference $1$.
Throughout the paper, we consider the setting where the number of vertices $n$ grows, and other parameters $p$, $q$, and $\tau$ may depend on $n$.

\begin{definition}[Model $\cP$, Planted Dense Cycle] \label{def:mod-p}
Suppose that $0 \le q < p \le 1$ and $0 \le \tau \le 1/2$. 
Let $z \in [0,1]^n$ be a latent random vector whose entries $z_1, \dots, z_n$ are i.i.d.\ $\Unif([0,1])$ variables.  
We observe an undirected graph with adjacency matrix $A \in \R^{n \times n}$ whose edges, conditional on $z_1, \dots, z_n$, are independently sampled as follows: 
$A_{ij} \sim \Bern(p)$ if $\dist(z_i,z_j) \le \tau/2$ and $A_{ij} \sim \Bern(q)$ otherwise, where $(i,j) \in \binom{[n]}{2}$.
We write $A \sim \cP$.
\end{definition}

\noindent
In short, a graph $A$ from model~$\cP$ is a $G(n,q)$ \ER graph with a planted dense cycle that has edge density $p$ and expected bandwidth $n \tau$.
The location of the cycle is determined by the latent variable $z$.
For comparison, the Watts--Strogatz model plants a dense cycle of bandwidth \emph{exactly} $n \tau$; it also assumes that the average degree is matched to that in the noiseless case where $p = 1$ and $q = 0$, so $\tau = \tau p + (1-\tau) q$ in \cite{watts1998collective,cai2017detection}.
Moreover, the bandwidth $n \tau$ is typically much smaller than $n$ in small-world networks, so we may assume $\tau \le 1/2$ throughout the paper to ease the presentation.

In addition, we use $\cQ$ to denote an \ER graph model.

\begin{definition}[Model $\cQ$, \ER graph] \label{def:mod-q}
Suppose that $0 \le q < p \le 1$ and $0 \le \tau \le 1/2$. 
Let $r := \tau p + (1-\tau) q$.
We observe a $G(n,r)$ \ER graph with adjacency matrix $A \in \R^{n \times n}$.
We write $A \sim \cQ$.
\end{definition}

\noindent
Note that the condition $r = \tau p + (1-\tau) q$ is imposed so that the average degrees are matched in the two models $\cP$ and $\cQ$.

There are two problems associated with the model of planted dense cycle, detection and recovery.
Detection of the planted cycle is formulated as a statistical hypothesis testing problem.

\begin{problem}[Detection]
\label{prob:detect}
Observing the adjacency matrix $A \in \R^{n \times n}$ of a graph, we test $A \sim \cP$ against $A \sim \cQ$.
\end{problem}

Recovery of the planted cycle is formulated as determining whether vertices $i$ and $j$ are neighbors in the cycle for $(i,j) \in \binom{[n]}{2}$, i.e., whether $\dist(z_i,z_j) \le \tau/2$.
By symmetry, it suffices to consider the pair of vertices $(1,2)$ and estimate $\bbone\{ \dist(z_{1}, z_{2}) \le \tau/2 \}$. 

\begin{problem}[Recovery]
\label{prob:recover}
Observing the adjacency matrix $A \in \R^{n \times n}$ of a graph $A \sim \cP$ with a planted cycle, we aim to recover $\chi := \bbone\{\dist(z_1, z_2) \le \tau/2\}$. 
\end{problem}

\subsection{Overview of results}
Our results fall within the framework of low-degree polynomial algorithms (see \cite{kunisky2019notes}).
Let $\R[A]_{\le D}$ denote the set of multivariate polynomials in the entries of $A$ with degree at most $D$.
The scaling of $D = D_n$ will be made precise later, but in general, when we speak of a ``low-degree'' polynomial, its degree is at most $D = n^{o(1)}$. 

For the detection problem, we study the ability of such a polynomial to distinguish the two distributions $\cP$ and $\cQ$, in the following sense \cite[Definition~1.6]{fp}.

\begin{definition}[Strong separation]
\label{def:strong-separation}
A polynomial $f = f_n \in \RR[A]_{\le D}$ is said to \emph{strongly separate} $\cP$ and $\cQ$ over $A$ if
\[ \sqrt{\Var_\cP[f(A)] \vee \Var_\cQ[f(A)]} = o\left(\left|\E_\cP[f(A)] - \E_\cQ[f(A)]\right|\right) \]
as $n \to \infty$.
\end{definition}
\noindent
By Chebyshev's inequality, strong separation implies that, by thresholding the value $f(A)$, one can test between $A \sim \cP$ and $A \sim \cQ$ with both type I and type II errors of order $o(1)$.

\medskip

For the recovery problem, recall that we aim to estimate $\chi = \bbone\{\dist(z_1, z_2) \le \tau/2\}$.
The quantity of interest is the degree-$D$ minimum mean squared error (see \cite{schramm2020computational})
$$
\MMSE_{\le D} := \inf_{f \in \R[A]_{\le D}} \E_{\cP} \left[ (f(A) - \chi)^2 \right] . 
$$
It is equivalent to consider the degree-$D$ maximum correlation
\begin{equation}
\Corr_{\le D} 
:= \sup_{\substack{f \in \R[A]_{\le D}, \\ \E_{\cP}[f(A)^2] \ne 0}} \frac{\E_{\cP}[f(A) \cdot \chi]}{\sqrt{\E_{\cP}[f(A)^2]}} 
\label{eq:corr-d}
\end{equation}
because of the following relation \cite[Fact~1.1]{schramm2020computational}
$$
\MMSE_{\le D} = \E_{\cP}[\chi^2] - \Corr_{\le D}^2 .
$$
The trivial estimator $f(A) \equiv \E_{\cP}[\chi]$ of $\chi$ achieves a correlation
$$
\frac{\E_{\cP}[f(A) \cdot \chi]}{\sqrt{\E_{\cP}[f(A)^2]}} = \E_{\cP}[\chi] ,
$$
which motivates the following definition.

\begin{definition}[Weak recovery]
\label{def:weak-recovery}
A polynomial $f = f_n \in \RR[A]_{\le D}$ is said to \emph{weakly recover} an estimand $\chi$ given $A \sim \cP$ if
\[ \frac{\E_{\cP}[f(A) \cdot \chi]}{\sqrt{\E_{\cP}[f(A)^2]}} = \omega\left( \E_{\cP}[\chi] \right) \]
as $n \to \infty$.
\end{definition}
\noindent
Note that for the estimand $\chi = \bbone\{\dist(z_1, z_2) \le \tau/2\}$ in Problem~\ref{prob:recover}, we have $\E_{\cP}[\chi] = \tau$.

\medskip

For both the detection and the recovery problem, we establish low-degree upper and lower bounds that match up to an $n^\delta$ factor for an arbitrarily small constant $\delta > 0$. Our main results are summarized in the following theorem.

\begin{theorem}[Summary of the detection-recovery gap] \label{thm:main-informal}
Suppose that $C q \le p \le C' q$ for constants $C' > C > 1$.
Fix any constant $\delta \in (0,0.1)$.
Suppose that $2/\delta \le D \le o \Big( \big(\frac{\log n}{\log \log n}\big)^2 \Big)$ and $\tau \le (\log n)^{-8}$.
\begin{itemize}
\item
\textbf{(Detection)} Consider Problem~\ref{prob:detect} and Definition~\ref{def:strong-separation}.
If $n^3 p^3 \tau^4 \le n^{-\delta}$, then no polynomial in $\R[A]_{\le D}$ strongly separates $\cP$ and $\cQ$.
If $n^3 p^3 \tau^4 = \omega(1)$, then there is a polynomial in $\R[A]_{\le D}$ that strongly separates $\cP$ and $\cQ$.

\item
\textbf{(Recovery)} Consider Problem~\ref{prob:recover} and Definition~\ref{def:weak-recovery}. 
If $n p \tau^2 \le n^{-\delta}$, then no polynomial in $\R[A]_{\le D}$ weakly recovers $\chi$.
If $n p \tau^2 \ge n^{\delta}$, then there is a polynomial in $\R[A]_{\le D}$ that weakly recovers $\chi$.
\end{itemize}
\end{theorem}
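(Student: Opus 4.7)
The plan is to treat the four parts of Theorem~\ref{thm:main-informal} within the low-degree polynomial toolbox, with the main effort going into subgraph counting weighted by the circular latent geometry. For both lower bounds I would work in the Fourier basis $\{h_\alpha\}_{\alpha \subseteq \binom{[n]}{2}}$ of the product Bernoulli measure $\cQ$, in which the variables $Y_{ij}:=(A_{ij}-r)/\sqrt{r(1-r)}$ are i.i.d.\ and standardised. Detection is obstructed by an upper bound on the low-degree likelihood ratio norm
\[
\sum_{0<|\alpha|\le D}\bigl(\E_\cP[h_\alpha(A)]\bigr)^2,
\]
while recovery, following \cite{schramm2020computational}, is controlled by the conditional analogue $\sum_{|\alpha|\le D}\bigl(\E_\cP[h_\alpha(A)\,\chi]\bigr)^2$, which I must bound by $O(\E_\cP[\chi]^2)$.

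The key reduction is that, conditional on the latent vector $z$, the edges of $A$ are independent, so $\E_\cP[h_\alpha(A)]$ factorises as $\E_z\prod_{(i,j)\in\alpha}\mu_{ij}(z)$, where $\mu_{ij}(z)$ takes one of two explicit values depending on whether the pair $(i,j)$ sits inside the planted arc. Because $\E_{z_v}[\mu_{vw}(z)\mid z_{-v}]=0$, any vertex of $\alpha$ of $\alpha$-degree one annihilates the whole contribution, so only subgraphs with minimum degree at least two survive; the smallest such is the triangle. For each surviving $\alpha$ I would bound $|\E_\cP[h_\alpha(A)]|$ by $\bigl(C\sqrt{p/r}\bigr)^{|\alpha|}$ times a latent arc probability that factorises over connected components $C$ of $\alpha$ as $\prod_C \tau^{|V(C)|-1}$. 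Counting the $\le n^{|V(\alpha)|}$ labellings and summing over shapes, the triangle dominates with contribution $\asymp n^3 p^3 \tau^4$, while every additional edge or vertex costs a factor $np\tau^2$ or $p$; the hypotheses $p=\Theta(q)$, $\tau\le(\log n)^{-8}$, and $D\le o\bigl((\log n/\log\log n)^2\bigr)$ collapse the residual sum into a convergent geometric series. For the recovery lower bound the same decomposition applies to $\E_\cP[h_\alpha(A)\chi]$, except now $\chi$ forces $z_1$ and $z_2$ close; pendant vertices outside $\{1,2\}$ still annihilate, but vertices $1$ and $2$ are rescued by the $\chi$ factor, so the smallest contributing shape is the 2-path $1$--$k$--$2$, yielding the recovery threshold $np\tau^2$ and hence the detection--recovery gap.

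For the two upper bounds I would exhibit explicit low-degree polynomials whose mean gap dominates the standard deviation. For detection when $n^3 p^3 \tau^4=\omega(1)$, the natural choice is a centred triangle count $T(A)=\sum_{i<j<k}(A_{ij}-r)(A_{jk}-r)(A_{ik}-r)$, whose mean gap $\E_\cP T-\E_\cQ T\asymp n^3\tau^2(p-q)^3$ and variances of order $n^3 r^3$ under either model (using $p=\Theta(q)$) give SNR$^2\asymp n^3 p^3\tau^4$. For recovery when $np\tau^2\ge n^\delta$, I would use a centred signed-path count from $1$ to $2$ of some bounded length $L=L(\delta)$, exactly the shape singled out by the lower-bound analysis. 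A direct computation along the same arc-probability lines shows that its correlation with $\chi$ beats $\E_\cP[\chi]=\tau$ once $np\tau^2$ is a sufficiently large polynomial in $n$, which $np\tau^2\ge n^\delta$ supplies by choosing $L$ large in terms of $\delta$.

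The main obstacle I expect is the combinatorial bookkeeping for the lower bounds. One must simultaneously exploit the pendant-vanishing observation to restrict to minimum-degree-two subgraphs, produce arc-probability bounds that factorise cleanly over connected components of arbitrary $\alpha$, and sum over the super-exponentially many isomorphism classes of graphs of size $\le D$ without dissipating the $n^{o(1)}$ slack provided by $D\le o\bigl((\log n/\log\log n)^2\bigr)$. The first and third moves are by now standard in the low-degree planted-subgraph literature; the principal new feature is the second, since the circular latent geometry produces a component-wise arc factor $\tau^{|V(C)|-1}$ that must be tracked consistently through the entire sum.
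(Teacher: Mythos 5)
Your outline is on target for three of the four bounds. The detection lower bound (low-degree likelihood ratio norm $\sum_{0<|\alpha|\le D}(\E_\cP[h_\alpha])^2$, pendant-edge vanishing, the $\tau^{v-m}$ arc factor factorizing over connected components, triangle dominance, and the two-regime combinatorial count that handles $D$ up to $o((\log n/\log\log n)^2)$) is exactly the paper's argument. The detection upper bound via a centred/signed triangle count, and the recovery upper bound via a signed path count from $1$ to $2$ of length $O(1/\delta)$, also match, modulo the paper's additional requirement that the paths be self-avoiding (needed to keep the variance bound clean) and a careful variance computation conditional on $z_1,z_2$ handled through an Irwin--Hall analysis.

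The genuine gap is in the recovery lower bound. You propose to bound $\Corr_{\le D}^2$ by $\sum_{|\alpha|\le D}\bigl(\E_\cP[h_\alpha(A)\,\chi]\bigr)^2$, with $h_\alpha$ the $\cQ$-orthonormal basis. This quantity does \emph{not} upper bound the degree-$D$ correlation, because $\{h_\alpha\}$ is not orthonormal under $\cP$: $\Corr_{\le D}^2 = v^\top G^{-1} v$ with $v_\alpha = \E_\cP[h_\alpha\chi]$ and $G_{\alpha\beta}=\E_\cP[h_\alpha h_\beta]\ne\delta_{\alpha\beta}$, and there is no reason for $G\succeq I$. Worse, your sum includes contributions from disconnected $\alpha$ where one component contains $\{1,2\}$ and another component is, say, a triangle: these factor as $\E_\cP[h_{\alpha_1}\chi]\cdot\E_\cP[h_{\alpha_2}]$, and the sum over the free triangle component scales like the detection advantage $\Adv_{\le D}^2-1\asymp n^3p^3\tau^4$. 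Precisely in the interesting regime $n^3p^3\tau^4 = \omega(1)$, $np\tau^2 = o(1)$ (region B in the phase diagram), your proposed quantity blows up even though $\Corr_{\le D}^2$ stays $O(\tau^2)$. The framework of Schramm--Wein, as instantiated in Proposition~\ref{prop:lower-bound-estimation-general} of the paper, avoids this by first applying Jensen's inequality $\E_\cP[f^2]\ge\E[(\E[f\mid Z])^2]$ conditional on the Bernoulli noise $Z$, then performing a triangular change of basis $M$ and solving $w^\top M = v^\top$ recursively. This produces cumulant-like coefficients $\rho_\alpha$ (Equation~\ref{eq:rho-recursion}) that are shown in Lemma~\ref{lem:zero-cumulant} to \emph{vanish identically} whenever $\alpha$ is disconnected or fails to contain both vertices $1$ and $2$, killing exactly the spurious terms that would otherwise spoil the bound. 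Your intuition about the $2$-path $1$--$k$--$2$ being the dominant surviving shape and about the $np\tau^2$ scaling is correct, but it needs to be run through this cumulant recursion rather than through the naive inner-product sum.
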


\begin{proof}
The four bounds are established in Theorems~\ref{thm:detect-lower}, \ref{thm:detect-upper}, \ref{thm:recover-lower}, and~\ref{thm:recover-upper}, respectively.
It suffices to note that under the assumptions of the theorem, the conditions \eqref{eq:detect-lower-cond}, \eqref{eq:detect-upper-cond}, \eqref{eq:recover-lower-cond}, and \eqref{eq:recover-upper-cond} are all satisfied.
See also the discussion after each of the theorems.
\end{proof}

By the above theorem, there is a gap between the detection threshold and the recovery threshold for planted dense cycles if we focus on low-degree polynomials.
To better illustrate the detection-recovery gap, let us suppose $p = n^{-a}$ and $\tau = n^{-b}$ for constants $a, b \in (0,1)$.
Then the detection threshold is given by $3 - 3a - 4b = 0$, while the recovery threshold is given by $1- a - 2b = 0$.
We plot the phase diagram in Figure~\ref{fig:phase}.
In particular, in region $B$ of the figure, detection is easy while recovery is hard.


\begin{figure}[ht]
\centering

\begin{tikzpicture}[scale=0.4]

  \def\xtick#1#2{\draw[thick] (#1)++(0,.2) --++ (0,-.4) node[below=-.5pt,scale=0.7] {#2};}
  \def\ytick#1#2{\draw[thick] (#1)++(.2,0) --++ (-.4,0) node[left=-.5pt,scale=0.7] {#2};}
  
  \coordinate (O) at (0,0);
  \coordinate (NE) at (10,10);
  \coordinate (NW) at (0,10);
  \coordinate (SE) at (10,0);
  \coordinate (W1) at (0,5);
  \coordinate (W2) at (0,7.5);
  \coordinate (S1) at (5,0);
  
  \def\DE{(W2) to (SE)}
  \def\RE{(W1) to (SE)}
  \path[name path=DE] \DE;
  \path[name path=RE] \RE;
  
  \fill[mylightred] (W2) -- (SE) -- (NE) -- (NW) -- cycle;
  \fill[blue!5] (W1) -- (SE) -- (W2) -- cycle;
  \fill[mylightblue] (O) -- (SE) -- (W1) -- cycle;

  \node at (6,7) {A};
  \node at (2,5) {B};
  \node at (3,2) {C};
  
  \draw[semithick] \DE;
  \draw[semithick] \RE;
  
  \draw[semithick] (O) rectangle (NE);
  \node[left=25pt] at (W1) {$b$};
  \node[below=10pt] at (S1) {$a$};
  \xtick{O}{0}
  \xtick{SE}{1}
  \ytick{O}{0}
  \ytick{NW}{1}
  \ytick{W1}{0.5}
  \ytick{W2}{0.75}
  
\end{tikzpicture}

\caption{The detection-recovery gap for planted dense cycles with $p = n^{-a}$ and $\tau = n^{-b}$. Detection is hard in region A, and easy in regions B and C. Recovery is hard in regions A and B, and easy in region C.}
\label{fig:phase}
\end{figure}

In Theorem~\ref{thm:main-informal}, we have assumed that $p$ and $q$ are of the same order, which is a standard simplification in the literature for related problems (see, e.g., \cite{hajek2015computational}).
In fact, for three of the four bounds (Theorems~\ref{thm:detect-lower}, \ref{thm:detect-upper}, and~\ref{thm:recover-lower}, except the recovery upper bound), the edge density $p$ in the cycle can be much higher than the edge density $q$ outside the cycle; for the detection and recovery lower bounds (Theorems~\ref{thm:detect-lower} and~\ref{thm:recover-lower}), $p$ can also approach $q$ in the sense that $p-q$ is of a smaller order than $p$ or $q$.
The latter regime is also addressed in a related but different context of computational lower bounds by \cite{brennan2019reducibility}.

For the recovery upper bound, our proposed statistic and analysis yield stronger results than weak recovery if we consider efficient algorithms beyond low-degree polynomials.
Namely, we produce an estimator $\hat \chi \in \{0,1\}$ that recovers $\chi = \1\{\dist(z_1,z_2) \le \tau/2\}$ with high probability, and also a consistent estimator of the underlying random geometric graph.
See Theorem~\ref{thm:exact-recovery} and Corollary~\ref{cor:rgg}.

\paragraph{Technical contributions}
It is worth noting that none of the four bounds follow trivially from existing work.
For the detection lower bound, while the framework of low-degree polynomials is well-understood (see, e.g., \cite{sam-thesis,fp}), we provide a new application to random geometric graphs.
The analysis also prompts us to study the signed triangle count proposed by \cite{bubeck2016testing} in the noisy case, proving the detection upper bound.
For the recovery lower bound, we generalize the technique developed by \cite{schramm2020computational} for the planted clique problem to a general binary observation model, and then apply it to our problem.
Finally, the most technical part of this work is the recovery upper bound, where we provide a delicate analysis of self-avoiding walks between two vertices in the observed graph. 
The same statistic has been used by \cite{hopkins2017efficient} for community detection, but we perform a new analysis of certain probabilistic and combinatorial properties of self-avoiding walks in random geometric graphs.

\paragraph{Open problems}
While we have characterized the detection and recovery thresholds for low-degree polynomial algorithms, the information-theoretic thresholds for both problems remain largely open.
Most existing results in the literature of small-world graphs or random geometric graphs focus on different regimes and are not comparable to our results.
For example, \cite{ding2020consistent} consider the regime $\tau n = n^{o(1)}$ and \cite{liu2021phase} assume a constant $p$.
One possible exception is the work by \cite{cai2017detection}, which assumes a bandwidth exactly $\tau n$ instead of $\tau n$ in expectation.
Ignoring this difference, their results can be compared to ours for $p$, $q$, and $\tau$ all of the order $n^{-a}$, i.e., on the diagonal $a=b$ in Figure~\ref{fig:phase}.
One of their results states that detection is information-theoretically possible if $a < 1/2$.
Consequently, the information-theoretic threshold for detection would be inside region A in Figure~\ref{fig:phase}, and there would be a statistical-to-computational gap for the detection problem.
However, since the comparison between \cite{cai2017detection} and our work is not fully rigorous, we leave the study of information-theoretic thresholds to future work.

Another interesting question left open by our work is what the detection and recovery thresholds are for higher-dimensional geometry.
For example, the latent locations $z_1, \dots, z_n$ may be distributed on the unit sphere $\mathcal{S}^{d-1}$ in $\R^d$ for $d \ge 3$, rather than on a circle.
We believe many of the results in this work extend to the case of a fixed $d$, but if $d$ grows with $n$, then the problem becomes significantly more difficult and novel ideas are required.

\medskip

In the sequel, we present low-degree lower bounds before upper bounds for both the detection and the recovery problem.
The rationale behind this nonstandard order of presentation is in fact an important advantage of the low-degree framework: The proof of a low-degree lower bound will naturally suggest an efficient algorithm that potentially achieves the matching upper bound.

\section{The detection problem}
\label{sec:detect}

As discussed in the introduction, the planted dense cycle model is a one-dimensional random geometric graph model.
Detection of geometry in random graphs has been studied, and a canonical algorithm for this task is counting \emph{signed triangles} proposed by \cite{bubeck2016testing}.
On the other hand, computational lower bounds for random geometric graphs are not well-understood even in the one-dimensional case.
We first present the low-degree lower bounds, whose proof suggests that the statistic of signed triangles has the best distinguishing power.
Then we give a self-contained analysis of signed triangles in our case.

\subsection{Lower bound}
The standard procedure for proving low-degree lower bounds consists in analyzing the distinguishing power of an orthonormal basis of functions of the observations under model~$\cQ$.
Towards this end, for $(i,j) \in \binom{[n]}{2}$, define
\begin{equation}
\bar A_{ij} := \frac{A_{ij} - r}{\sqrt{r(1-r)}} .
\label{eq:def-a-bar}
\end{equation}
For $\alpha \subseteq \binom{[n]}{2}$, define
\begin{equation*}
\phi_\alpha(A) := \prod_{(i,j) \in \alpha} \bar{A}_{ij} ,
\end{equation*}
and let $\phi_\varnothing(A) \equiv 1$. Then $\{\phi_\alpha\}_{\alpha \subseteq \binom{[n]}{2}}$ is an orthonormal basis for functions on the hypercube $\{0,1\}^{\binom{[n]}{2}}$ under $\cQ$.
Moreover, since $r$ is the average edge density in both models $\cP$ and $\cQ$, the larger $p$ is compared to $r$, the larger signal we have at each edge.
Hence we define a quantity
\begin{equation}
\mu := \frac{p-r}{\sqrt{r(1-r)}} 
\label{eq:def-mu}
\end{equation}
that can be understood as the signal-to-noise ratio of model $\cP$.
We have the following theorem.

\begin{theorem}[Detection lower bound]
\label{thm:detect-lower}
Consider Problem~\ref{prob:detect}.
Fix any constant $\delta \in (0, 0.1)$.
No polynomial $f \in \R[A]_{\le D}$ strongly separates $\cP$ and $\cQ$ in the sense of Definition~\ref{def:strong-separation}, if
\begin{equation}
n^3 \tau^4 \mu^6 \le n^{-\delta}, \qquad
\mu = \tilde O(1), \qquad
D = o \bigg( \Big(\frac{\log n}{\log \log n}\Big)^2 \bigg).
\label{eq:detect-lower-cond}
\end{equation}
\end{theorem}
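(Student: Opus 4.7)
My plan is the standard low-degree route: it suffices to show $\|L^{\le D}-1\|_{\cQ}^2 = O(1)$, where $L := d\cP/d\cQ$ and $L^{\le D}$ is its orthogonal projection onto $\R[A]_{\le D}$. For any $f \in \R[A]_{\le D}$, Cauchy--Schwarz gives
\[
|\E_{\cP}[f]-\E_{\cQ}[f]| = \big|\langle L^{\le D}-1,\, f - \E_{\cQ}[f]\rangle_{\cQ}\big| \le \|L^{\le D}-1\|_{\cQ}\cdot \sqrt{\Var_{\cQ}[f]},
\]
so a bounded LDLR precludes $|\E_{\cP}[f]-\E_{\cQ}[f]| = \omega(\sqrt{\Var_{\cQ}[f]})$ and thus rules out strong separation; the companion estimate handling $\Var_{\cP}[f]$ follows along the lines of~\cite{fp}. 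Expanding in the orthonormal basis $\{\phi_\alpha\}$ of $L^2(\cQ)$ reduces the whole task to establishing
\[
\sum_{\varnothing \ne \alpha,\ |\alpha|\le D}\bigl(\E_{\cP}\phi_\alpha\bigr)^2 = O(1).
\]

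\textbf{Per-subgraph estimate and the pendant-edge observation.} Setting $B_{ij}:=\bbone\{\dist(z_i,z_j)\le \tau/2\}$, the identity $r = \tau p + (1-\tau)q$ gives $\E[A_{ij}-r\mid z] = (p-q)(B_{ij}-\tau)$. Conditioning on $z$ and using conditional independence of edges,
\[
\E_{\cP}\phi_\alpha = \Big(\frac{p-q}{\sqrt{r(1-r)}}\Big)^{|\alpha|}\,\E_z \!\!\prod_{(i,j)\in\alpha}\!(B_{ij}-\tau) \;=\; \Big(\frac{\mu}{1-\tau}\Big)^{|\alpha|} T_\alpha,
\]
where $T_\alpha := \E_z \prod_{(i,j)\in\alpha}(B_{ij}-\tau)$ carries all the geometry. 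The crucial structural fact is $\E_{z_j}[B_{ij}-\tau]=0$, so whenever $\alpha$ has a vertex $j$ of degree one, integrating out $z_j$ first forces $T_\alpha=0$; thus only $\alpha$ with minimum degree at least two contribute, and the smallest such $\alpha$ is a triangle. For such an $\alpha$ with $v$ vertices, $k\ge v$ edges, and $c$ connected components, a spanning-tree argument in each component (each tree edge constrains $\dist(z_i,z_j)\le \tau/2$ at a cost $\tau$) yields $|T_\alpha|\lesssim \tau^{v-c}$, hence $(\E_{\cP}\phi_\alpha)^2 \lesssim \mu^{2k}\tau^{2(v-c)}$ after absorbing the $1/(1-\tau)$ factor.

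\textbf{Enumeration and the main obstacle.} The triangle alone (with $v=k=3$, $c=1$) contributes $O(n^3\mu^6\tau^4)=O(n^{-\delta})$ by hypothesis, exactly matching the threshold in the theorem. For a general connected min-degree-$\ge 2$ shape $(v,k)$, the $O(n^v)$ labelings yield per isomorphism class a bound
\[
n^v\mu^{2k}\tau^{2(v-1)} \;=\; n\,(n\mu^2\tau^2)^{v-1}\mu^{2(k-v+1)};
\]
since $n^3\mu^6\tau^4\le n^{-\delta}$ and $\mu=\tilde O(1)$ jointly give $n\mu^2\tau^2 = o(1)$, each extra vertex beyond the triangle produces an $o(1)$ factor, providing geometric decay in $v$. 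Disconnected $\alpha$ factorize across components and are summed as products of these connected bounds. The main technical obstacle is the combinatorial bookkeeping: the number of connected min-degree-$\ge 2$ isomorphism classes with $k$ edges grows super-exponentially in $k$, and combined with the polylogarithmic overhead $\mu^{2(k-v+1)}$ from additional edges, this must be absorbed by the geometric gain coming from the extra $\tau$ factors; precisely this trade-off forces the hypothesis $D = o\!\big((\log n/\log\log n)^2\big)$, echoing similar sub-polylogarithmic degree thresholds in \cite{sam-thesis, hopkins2017efficient, fp}.
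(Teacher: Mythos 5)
Your proposal follows the same route as the paper: expand the low-degree likelihood ratio in the $\phi_\alpha$ basis, kill dangling edges via $\E_{z_j}[B_{ij}-\tau]=0$, bound the remaining Fourier coefficients by a combinatorial/geometric estimate, and observe that the triangle term $n^3\mu^6\tau^4$ is the bottleneck. Two things are worth noting. First, your per-subgraph estimate is cleaner than the paper's: the leaf-peeling/spanning-forest argument gives $|T_\alpha| \le (2\tau(1-\tau))^{v-c}$ directly, whereas the paper's Lemma~\ref{lem:connected-alpha-bd} goes through a block-partition of the latent points and picks up a $v^{2v}$ loss. (You write ``$\lesssim \tau^{v-c}$'' hiding the $2^{v-c}$ factor, which is harmless but should be kept since $v$ can grow with $D$.) Second — and this is the real gap — you stop exactly where the paper's actual work begins. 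Proposition~\ref{prop:adv-bound} is the content of the proof: one must sum $\mu^{2|\alpha|}\tau^{2(v-c)}$ against $n^v$ placements and the shape count $\binom{\binom{v}{2}}{|\alpha|}\wedge 2^{\binom{v}{2}}$, and the delicate regime is \emph{not} ``many extra vertices'' (which gives you geometric decay $n\mu^2\tau^2 \le n^{-\delta/3}$ per vertex, as you note) but rather \emph{many extra edges on few vertices}: when $v\le\sqrt D$ the number of edges $|\alpha|$ can be as large as $v^2/2$, and the term $R^{2|\alpha|}$ with $R:=\max\{2D\mu/(1-\tau),1\}$ is controlled only because $R^{\sqrt D}=n^{o(1)}$ under $\mu=\tilde O(1)$ and $D=o((\log n/\log\log n)^2)$. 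You flag that the $D$ condition comes from ``the trade-off'' but do not isolate this regime or carry out the two-case split ($v\le\sqrt D$ vs.\ $v>\sqrt D$) that makes the bound close; as written, the proposal asserts the conclusion of the enumeration rather than proving it.
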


To clarify, $\tilde{O}(1)$ in \eqref{eq:detect-lower-cond} does not stand for a specific bound but rather allows $\mu = \mu_n$ to be \emph{any} sequence that scales as $\tilde{O}(1)$, and similarly for the condition on $D$.
In addition, to ease the presentation, we have assumed the conditions in \eqref{eq:detect-lower-cond} that are stronger than what is required by the proof:
It suffices to assume $n^3 \tau^4 \mu^6 \le n^{-o(1)}$ for an appropriately defined $o(1)$ quantity, and the degree $D$ can be polylogarithmic in $n$ or even $n^{o(1)}$ if $\mu$ is sufficiently small.

The proof of the above theorem is deferred to Section~\ref{sec:detect-lower}.
We now provide a proof sketch.
To show that no polynomial of degree at most $D$ strongly separates $\cP$ and $\cQ$, it suffices to prove that the ``advantage''
$$
\Adv_{\le D} 
:= \sup_{\substack{f \in \R[A]_{\le D}, \\ \E_{\cQ}[f(A)^2] \ne 0}} \frac{\E_{\cP}[f(A)]}{\sqrt{\E_{\cQ}[f(A)^2]}} 
$$
is $O(1)$; see~\cite[Proposition~6.2]{fp}. 
Furthermore, it is known \cite[Section~2.3]{sam-thesis} that
\begin{equation}
\Adv_{\le D}^2 = \sum_{\alpha \subseteq \binom{[n]}{2} \,:\, |\alpha| \le D} \big( \E_{\cP} [ \phi_\alpha(A) ] \big)^2 .
\label{eq:adv-sq-sum}
\end{equation}
The rest of the proof consists in controlling all the summands in \eqref{eq:adv-sq-sum}, which is done in Section~\ref{sec:detect-lower}.
This eventually leads to Proposition~\ref{prop:adv-bound}, from which Theorem~\ref{thm:detect-lower} easily follows.

To further clarify the intuition behind the sum in \eqref{eq:adv-sq-sum}, for each subgraph $\alpha \subseteq \binom{[n]}{2}$, we can understand the quantity $\E_{\cP} [ \phi_\alpha(A) ]$ as the ``power'' of the statistic $\phi_\alpha(A)$ in distinguishing $\cP$ from $\cQ$.
The lower bound requires that the total distinguishing power, as a sum of $(\E_{\cP} [ \phi_\alpha(A) ])^2$ over all low-degree $\alpha$, is bounded.
On the other hand, if $\E_{\cP} [ \phi_\alpha(A) ]$ is large for a particular choice of $\alpha$, then the corresponding statistic $\phi_\alpha(A)$ can be used for testing between $\cP$ and $\cQ$.
A careful study of $\E[\phi_\alpha(A)]$ in Proposition~\ref{prop:adv-bound} suggests that the bottleneck case is when the graph $\alpha$ is a triangle.
Therefore, it is natural to consider signed triangles for the upper bound.

\subsection{Upper bound}
While signed triangles have been analyzed for random geometric graphs in previous works such as \cite{bubeck2016testing,brennan2020phase,liu2021phase,liu2022testing}, none of these results apply in our case.
For example, the setup closest to ours can be found in \cite{liu2021phase}, where high-dimensional random geometric graphs are studied but the probability $p$ has to be fixed.
Therefore, we present a self-contained analysis of the signed triangle statistic
\begin{equation}
S_3(A) := \sum_{H \in \binom{[n]}{3}} \prod_{(i,j) \in \binom{H}{2}} \bar{A}_{ij} .
\label{eq:signed-triangle}
\end{equation}
Note that if $\bar A_{ij}$ were replaced by $A_{ij}$ in the above definition, then $S_3(A)$ would be the number of triangles in the graph $A$. 
Hence $S_3(A)$ is a standardized version of triangle count. 

\begin{theorem}[Detection upper bound]
\label{thm:detect-upper}
Consider Problem~\ref{prob:detect}.
Suppose that $p \ge C q$ for a constant $C > 1$.
The degree-$3$ polynomial $S_3(A)$ defined in \eqref{eq:signed-triangle} strongly separates $\cP$ and $\cQ$ in the sense of Definition~\ref{def:strong-separation}, if 
\begin{equation}
n^3 \tau^4 p^6/r^3 = \omega(1) , \qquad 
n^3 \tau^2 p^3 = \omega(1) .
\label{eq:detect-upper-cond}
\end{equation}
\end{theorem}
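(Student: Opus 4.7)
The plan is to directly compute or estimate the means and variances of $S_3(A)$ under $\cP$ and $\cQ$, and then verify the strong-separation inequality of Definition~\ref{def:strong-separation} using the two hypotheses in \eqref{eq:detect-upper-cond}. Two structural facts drive the computation. First, under $\cQ$ the entries $\bar A_{ij}$ are independent with mean zero and unit variance, so $\E_\cQ[S_3] = 0$ and, since $\{\phi_H\}_{H \in \binom{[n]}{3}}$ sits inside the orthonormal basis $\{\phi_\alpha\}$ of $L^2(\cQ)$, one has $\Var_\cQ[S_3] = \binom{n}{3} = \Theta(n^3)$. Second, under $\cP$ the edges are conditionally independent given $z$, and with $\sigma_{ij} := \bbone\{\dist(z_i,z_j)\le\tau/2\}$, $\tilde\sigma_{ij} := \sigma_{ij}-\tau$, and $\nu := (p-q)/\sqrt{r(1-r)}$, one checks $\E_\cP[\bar A_{ij}\mid z] = \nu\,\tilde\sigma_{ij}$. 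Every mixed moment under $\cP$ thus reduces to a $z$-integral of a product of $\tilde\sigma$'s, times an extra factor $\E_\cP[\bar A_e^2\mid z] = O((p/r)\sigma_e + 1)$ at each edge of multiplicity two.

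These identities give $\E_\cP[S_3] = \binom{n}{3}\,\nu^3\,\E_z[\tilde\sigma_{12}\tilde\sigma_{23}\tilde\sigma_{13}]$. A short geometric computation using the fact that three i.i.d.\ uniform points on the circle of length $1$ are pairwise within $\tau/2$ with probability $\tfrac34\tau^2 + O(\tau^3)$ yields $\E_\cP[S_3] = \Theta(n^3\nu^3\tau^2)$. The assumption $p \ge Cq$ forces $\nu \asymp p/\sqrt{r}$, hence $\E_\cP[S_3]^2 \asymp n^6\tau^4 p^6/r^3$, and the first condition in \eqref{eq:detect-upper-cond} immediately gives $\Var_\cQ[S_3]^{1/2} = o(\E_\cP[S_3])$, settling the $\cQ$-side of strong separation.

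The heart of the proof is controlling $\Var_\cP[S_3] = \sum_{H,H'} \bigl(\E_\cP[\phi_H\phi_{H'}] - \E_\cP[\phi_H]\E_\cP[\phi_{H'}]\bigr)$, which I would decompose by $k := |V(H)\cap V(H')| \in \{0,1,2,3\}$. The $k=0$ pairs have independent $z$-marginals and contribute zero to the covariance exactly. For $k=1$, translation invariance of the uniform law on the circle makes $\E[\tilde\sigma_H\mid z_i]$ independent of the shared coordinate $z_i$, so those pairs also contribute zero exactly. For $k=2$ (about $n^4$ pairs of triangles sharing one edge $e$), the shared edge produces a $\bar A_e^2$ factor; isolating its conditional second moment and using the geometric identity that two radius-$\tau/2$ arcs at circular distance $d$ overlap in a segment of length $\max(\tau-d,0)$ gives per-pair covariance of order $\nu^4\tau^3 \cdot p/r$, hence total contribution $O(n^4 \nu^4\tau^3 p/r)$. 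Finally, the $k=3$ diagonal expands as $\sum_H \E_\cP[\prod_e \bar A_e^2] = O\bigl(n^3[1 + \tau(p/r) + \tau^2(p/r)^2 + \tau^2(p/r)^3]\bigr)$, each summand indexed by the number of in-cycle edges in the triangle.

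The main obstacle is verifying that every one of these pieces is $o(n^6\tau^4 p^6/r^3) = o(\E_\cP[S_3]^2)$. For each term the required inequality is a monomial in $n,\tau,p,r$, and a short interpolation shows that such an inequality follows from the two hypotheses $n^3\tau^4 p^6/r^3 = \omega(1)$ and $n^3\tau^2 p^3 = \omega(1)$; for example, $n^3\tau^3 p^5/r^2 \ge (n^3\tau^4 p^6/r^3)^{1/2}(n^3\tau^2 p^3)^{1/2}\cdot (p/r)^{1/2}$, which is $\omega(1)$ since $p \ge r$. Once these monomial checks are organised, Definition~\ref{def:strong-separation} yields strong separation of $\cP$ and $\cQ$ by $S_3(A)$.
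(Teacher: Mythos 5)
Your proposal is correct and follows essentially the same path as the paper: compute $\E_\cQ[S_3]=0$, $\Var_\cQ[S_3]=\binom{n}{3}$; lower-bound $\E_\cP[S_3]\asymp n^3\nu^3\tau^2$ via the conditional identity $\E_\cP[\bar A_{ij}\mid z]=\nu(\sigma_{ij}-\tau)$ (your $\nu$ equals the paper's $\mu/(1-\tau)$, a cosmetic reparameterization); and bound $\Var_\cP[S_3]$ by the same decomposition over $k=|V(H)\cap V(H')|\in\{0,1,2,3\}$, with $k\le 1$ vanishing by conditional independence and translation invariance and $k\in\{2,3\}$ handled by geometric estimates on how many of the six (or three) edges fall inside the cycle. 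Your $k=2$ bookkeeping via the explicit arc-overlap length $\max(\tau-d,0)$ is a touch cleaner and slightly tighter than the paper's union-bound decomposition in Proposition~\ref{prop:p-var}, but both are sufficient, and the final interpolation between the two hypotheses in \eqref{eq:detect-upper-cond} is the same monomial bookkeeping that closes the paper's proof.
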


The proof of the theorem is deferred to Section~\ref{sec:detect-upper}. 
In short, we control the two expectations $\E_\cP[f(A)]$, $\E_\cQ[f(A)]$ and the two variances $\Var_\cP[f(A)]$, $\Var_\cQ[f(A)]$ in Propositions~\ref{prop:q-exp-var}, \ref{prop:p-exp}, and~\ref{prop:p-var}, which together result in Theorem~\ref{thm:detect-upper}.
We have again chosen simplicity over generality for the statement of the above theorem by assuming $p \ge Cq$ for $C>1$.
A more general condition can be obtained from a refined comparison between the bounds in Propositions~\ref{prop:q-exp-var} and \ref{prop:p-var}.
The two conditions in \eqref{eq:detect-upper-cond} can be interpreted as follows.
First, as we see in the proof, $\mu = \Theta(p/r^{1/2})$, so the first condition in \eqref{eq:detect-upper-cond} matches the first condition in \eqref{eq:detect-lower-cond} up to an $n^{\delta}$ factor; they together give the detection threshold stated in Theorem~\ref{thm:main-informal}.
Next, for any three vertices, the probability that they are neighbors in the planted cycle and form a triangle in $A$ is $\Theta(\tau^2 p^3)$; as a result, there are $\Theta(n^3 \tau^2 p^3)$ triangles in the planted cycle on average.
Therefore, the second condition in \eqref{eq:detect-upper-cond} is a minimal condition guaranteeing the existence of triangles in the planted cycle in the first place.
Further, note that if $p$ and $q$ are of the same order, then $n^3 \tau^2 p^3 = \Omega( n^3 \tau^4 p^6 / r^3 )$, so the second condition in \eqref{eq:detect-upper-cond} is subsumed by the first condition.

\section{The recovery problem} 
\label{sec:recover}

Similar to the previous section, we start with the low-degree lower bound, whose proof suggests an optimal efficient algorithm.
Then we analyze the algorithm to establish the matching upper bound.
The optimal statistic for recovery turns out to be a signed count of self-avoiding walks between vertices $1$ and $2$, a statistic that has been used for related problems such as community detection in \cite{hopkins2017efficient} and spiked matrix models in \cite{ding2020estimating}.

\subsection{Lower bound}
A general strategy for proving low-degree lower bounds for estimation problems was proposed by \cite{schramm2020computational}.
We provide a lower bound in Proposition~\ref{prop:lower-bound-estimation-general} that extends the one in \cite[Section~3.5]{schramm2020computational} for the planted clique problem.
Let us start with a general recovery problem with binary observations.

\begin{definition}
\label{def:general-binary-model}
For an integer $N \ge 1$, let $B_1, \dots, B_N$ be i.i.d.\ $\Bern(q)$ variables. 
Consider a latent random subset $W \subseteq [N]$ from an arbitrary prior over subsets of $[N]$. 
Conditional on $W$, we define the observation $A \in \R^N$ as follows.
If $i \notin W$, then let $A_i := B_i$. 
If $i \in W$, then sample an independent $A_i \sim \Bern(p)$.
\end{definition}

Given $A$ from the above model, we aim to estimate $\chi := \bbone\{1 \in W\}$.
For a positive integer $D$, define
$$
\Corr_{\le D} 
:= \sup_{\substack{f \in \R[A]_{\le D}, \\ \E[f(A)^2] \ne 0}} \frac{\E[f(A) \cdot \chi]}{\sqrt{\E[f(A)^2]}} 
$$
as in \eqref{eq:corr-d}.
Let
\begin{equation}
\lambda := \frac{p-q}{\sqrt{q(1-q)}} ,
\label{eq:def-lambda}
\end{equation}
which is a signal-to-noise ratio analogous to $\mu$ in \eqref{eq:def-mu} for the detection problem (here in the recovery problem, model $\cQ$ is irrelevant, so $r$ is replaced by $q$ in the definition of $\lambda$).
The following result is proved in Section~\ref{sec:recover-lower}.

\begin{proposition}
\label{prop:lower-bound-estimation-general}
Assume the model in Definition~\ref{def:general-binary-model}. 
For $\beta \subseteq \alpha \subseteq [N]$, let 
$$
P_{\alpha \beta} := \p\{\alpha \setminus W = \beta\} .
$$
Suppose $P_{\alpha \alpha} > 0$ for all $\alpha \subseteq [N]$. 
Then we have 
$$
\Corr_{\le D}^2 \le \sum_{\alpha \subseteq [N] \,:\, |\alpha| \le D} \rho_\alpha^2 \, \lambda^{2 |\alpha|} ,
$$
where $\rho_\alpha$ is defined recursively by
$\rho_\varnothing := \p\{1 \in W\}$
and
\begin{align*}
\rho_\alpha 
:= \frac{1}{P_{\alpha \alpha}} \bigg( \p\{\alpha \cup \{1\} \subseteq W\} - \sum_{\beta \subsetneq \alpha} \rho_\beta \, P_{\alpha \beta} \bigg) .
\end{align*}
\end{proposition}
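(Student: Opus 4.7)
The plan is to adapt the low-degree estimation lower-bound template of \cite{schramm2020computational} (originally developed for planted clique) to the abstract binary observation model of Definition~\ref{def:general-binary-model}. The core idea is to expand any degree-$\le D$ polynomial in the Fourier basis orthonormal under the product $\Bern(q)$ measure, substitute the recursion defining $\rho_\alpha$, apply Cauchy--Schwarz to pull out the factor $\sum_\alpha \rho_\alpha^2 \lambda^{2|\alpha|}$, and then bound the resulting residual by $\E[f^2]$ via a direct decomposition of $f$ into signal and noise parts.

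First I would set $\bar A_i := (A_i - q)/\sqrt{q(1-q)}$ and $\phi_\alpha(A) := \prod_{i \in \alpha} \bar A_i$, so that any $f \in \R[A]_{\le D}$ expands as $f = \sum_{|\alpha| \le D} c_\alpha \phi_\alpha$. Conditioning on $W$ gives $\E[\phi_\alpha \mid W] = \lambda^{|\alpha|} \bbone\{\alpha \subseteq W\}$ and hence $\E[f\chi] = \sum_\alpha c_\alpha \lambda^{|\alpha|} \p\{\alpha \cup \{1\} \subseteq W\}$. Plugging in the identity $\p\{\alpha \cup \{1\} \subseteq W\} = \sum_{\beta \subseteq \alpha} \rho_\beta P_{\alpha\beta}$ (equivalent to the stated recursion) and swapping the order of summation yields $\E[f\chi] = \sum_\beta \rho_\beta \lambda^{|\beta|} B_\beta$ with $B_\beta := \sum_{\alpha \supseteq \beta} c_\alpha \lambda^{|\alpha|-|\beta|} P_{\alpha\beta}$. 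A single Cauchy--Schwarz in $\beta$ then gives
\[ \E[f\chi]^2 \;\le\; \Big(\sum_{|\beta| \le D} \rho_\beta^2 \lambda^{2|\beta|}\Big) \Big(\sum_{|\beta| \le D} B_\beta^2\Big), \]
reducing the proposition to the residual inequality $\sum_\beta B_\beta^2 \le \E[f^2]$.

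The hard part is this residual inequality, and in particular producing an expansion of $\E[f^2]$ that is comparable term-by-term to $\sum_\beta B_\beta^2$. My approach is to decompose $\bar A_i = \lambda \bbone\{i \in W\} + \xi_i$, where the $\xi_i$ are conditionally centered and independent given $W$, with $\E[\xi_i^2 \mid W] = 1 + (\kappa - 1)\bbone\{i \in W\}$ for $\kappa := p(1-p)/(q(1-q)) \ge 0$. Expanding and regrouping by the subset $T$ of ``pure noise'' coordinates gives $f(A) = \sum_T d_T(W) \prod_{i \in T} \xi_i$ with $d_T(W) := \sum_{\alpha \supseteq T} c_\alpha \lambda^{|\alpha|-|T|} \bbone\{\alpha \setminus T \subseteq W\}$. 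Conditional independence of the $\xi_i$'s together with the vanishing of odd moments leads to the clean identity
\[ \E[f^2] \;=\; \sum_T \E_W\!\Big[ d_T(W)^2 \prod_{i \in T}\!\big(1 + (\kappa - 1)\bbone\{i \in W\}\big) \Big]. \]

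To close the loop I would observe that, by unpacking $P_{\alpha\beta} = \p\{\beta \cap W = \varnothing,\ \alpha \setminus \beta \subseteq W\}$, one has $B_\beta = \E_W[\bbone\{\beta \cap W = \varnothing\} \cdot d_\beta(W)]$. Another Cauchy--Schwarz together with $P_{\beta\beta} \le 1$ gives $B_\beta^2 \le \E_W[\bbone\{\beta \cap W = \varnothing\} \, d_\beta(W)^2]$, and then the elementary pointwise inequality $\prod_{i \in \beta}\bigl(1 + (\kappa-1)\bbone\{i \in W\}\bigr) \ge \bbone\{\beta \cap W = \varnothing\}$ (each factor dominates $1 - \bbone\{i \in W\}$ since $\kappa \ge 0$) upgrades this to $B_\beta^2 \le \E_W\bigl[d_\beta^2 \prod_{i \in \beta}(1+(\kappa-1)\bbone\{i \in W\})\bigr]$. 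Summing over $\beta$ exactly recovers $\E[f^2]$, closing the inequality. The pointwise variance comparison, which hard-codes the binary structure of the observation model, is the delicate ingredient that makes the machine work; the remaining steps are expansion and bookkeeping.
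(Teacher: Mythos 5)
Your proof is correct and takes a genuinely different route from the paper's. Both approaches reduce to the same key inequality $\sum_{\beta} B_\beta^2 \le \E[f^2]$ (in your notation; the paper calls $B_\beta$ by the name $\hat g_\beta$), after the same algebra: expand $\E[f\chi]$ in the Fourier basis, fold in the $\rho$-recursion, and apply Cauchy--Schwarz, which is simply an unfolded version of the paper's $\|v^\top M^{-1}\|$ operator-norm bound. Where you diverge is in proving the residual inequality. The paper observes that $B_\beta$ is exactly the $\beta$-coefficient of $g(Z) := \E[f(Y) \mid Z]$ in the Walsh--Fourier basis $\{Z^\beta\}$ orthonormal under the pure-noise measure, so $\sum_\beta B_\beta^2 = \E[g(Z)^2] \le \E[f(Y)^2]$ by Jensen in two lines. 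You instead decompose $\bar A_i = \lambda \bbone\{i \in W\} + \xi_i$, regroup $f$ as $\sum_T d_T(W) \prod_{i \in T} \xi_i$, compute $\E[f^2]$ exactly via conditional independence and centering of the $\xi_i$, identify $B_\beta = \E_W[\bbone\{\beta \cap W = \varnothing\}\, d_\beta(W)]$, and close the loop with a conditional Cauchy--Schwarz plus the pointwise bound $\bbone\{\beta \cap W = \varnothing\} \le \prod_{i \in \beta}(1 + (\kappa - 1)\bbone\{i \in W\})$. Both are valid. The paper's Jensen route is shorter and carries over verbatim to other observation channels (Gaussian, Poisson, ...), whereas your argument is more hands-on and hard-codes the binary structure through the nonnegativity of $\kappa = p(1-p)/(q(1-q))$; on the other hand, yours makes entirely explicit how the variance inflation on planted coordinates enters the inequality, which can be useful intuition. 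One small nitpick: your phrase ``vanishing of odd moments'' overstates what is used---only $\E[\xi_i \mid W] = 0$ and conditional independence are needed for the diagonal identity $\E[f^2 \mid W] = \sum_T d_T(W)^2 \prod_{i \in T}\E[\xi_i^2 \mid W]$, since cross terms $T \ne T'$ always contain at least one unpaired $\xi_i$.
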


We now return to the problem of planted dense cycle and present the following result.

\begin{theorem}[Recovery lower bound]
\label{thm:recover-lower}
Consider Problem~\ref{prob:recover}.
Fix any constant $\delta \in (0, 0.1)$.
No polynomial $f \in \R[A]_{\le D}$ weakly recovers $\chi = \bbone\{\dist(z_1, z_2) \le \tau/2\}$ given $A \sim \cP$ in the sense of Definition~\ref{def:weak-recovery}, if
\begin{equation}
n \tau^2 \lambda^2 \le n^{-\delta}, \qquad
\lambda = O(1), \qquad
D = o \bigg( \Big(\frac{\log n}{\log \log n}\Big)^2 \bigg), \qquad
\tau D^4 \le 0.1.
\label{eq:recover-lower-cond}
\end{equation}
\end{theorem}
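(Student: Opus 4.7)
The plan is to invoke Proposition~\ref{prop:lower-bound-estimation-general}, identifying $[N]=\binom{[n]}{2}$ with the edge set of $K_n$, $W=\{(i,j)\in\binom{[n]}{2}:\dist(z_i,z_j)\le \tau/2\}$, and the distinguished index with the edge $(1,2)$. Since weak recovery requires $\Corr_{\le D}=\omega(\E_\cP[\chi])=\omega(\tau)$, the theorem reduces to proving
\[
\sum_{\alpha\subseteq\binom{[n]}{2}:\,|\alpha|\le D}\rho_\alpha^{2}\,\lambda^{2|\alpha|}=O(\tau^{2}),
\]
which yields $\Corr_{\le D}=O(\tau)$. The key probabilistic input is that the joint probabilities $\pi_\alpha:=\p\{\alpha\cup\{(1,2)\}\subseteq W\}$ that feed the recursion factor across the connected components of $\alpha\cup\{(1,2)\}$; for a tree component on $v$ vertices the factor is exactly $\tau^{v-1}$ (fix a root on the circle, then place each other vertex in an arc of length $\tau$), and for components with cycles the factor is at most $\tau^{v-1}$.

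The first and main step is a structural bound on $\rho_\alpha$, obtained by induction on $|\alpha|$ via peeling pendant vertices of $\alpha\cup\{(1,2)\}$ not in $\{1,2\}$. The recursion $\sum_{\beta\subseteq\alpha}\rho_\beta P_{\alpha\beta}=\pi_\alpha$ exhibits substantial cancellation: when $v\notin\{1,2\}$ is a pendant vertex in $\alpha\cup\{(1,2)\}$, the tree factor $\tau$ associated with $v$ appears both in $\pi_\alpha$ and in $\rho_\beta P_{\alpha\beta}$ for each $\beta$ that omits the pendant edge, and these contributions cancel. The resulting dichotomy I aim to prove is: $\rho_\alpha=0$ whenever $\alpha$ contains no self-avoiding walk (SAW) from $1$ to $2$; otherwise $|\rho_\alpha|\le c_{|\alpha|}\,\tau^{s(\alpha)+2e(\alpha)}$, where $s(\alpha)$ is the length of a shortest $1$-to-$2$ SAW in $\alpha$ (or $1$ if $(1,2)\in\alpha$), $e(\alpha)$ counts excess edges of $\alpha\cup\{(1,2)\}$ beyond a minimal $1$-to-$2$ cycle, and $c_{|\alpha|}$ is a constant of polynomial size in $|\alpha|$. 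Base cases such as $\alpha=\{(1,2)\},\{(1,3),(3,2)\},\{(1,3),(3,4),(4,2)\}$ yield $\rho_\alpha=\tau,\tfrac{3}{4}\tau^{2},\tfrac{2}{3}\tau^{3}$ respectively, in accord with the bound.

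Given this structural control, summing is combinatorial. SAWs of length $k$ from $1$ to $2$ in $K_n$ number at most $n^{k-1}$, so the total SAW contribution to the sum is
\[
\sum_{k\ge 1}c_{k}^{2}\,n^{k-1}\tau^{2k}\lambda^{2k}=\tau^{2}\lambda^{2}\sum_{k\ge 1}c_{k}^{2}\,(n\tau^{2}\lambda^{2})^{k-1}=O(\tau^{2}\lambda^{2})=O(\tau^{2}),
\]
where the geometric series converges under $n\tau^{2}\lambda^{2}\le n^{-\delta}$, $\lambda=O(1)$, and $D=o\big((\log n/\log\log n)^{2}\big)$. For $\alpha$'s with $e(\alpha)\ge 1$ (chords, additional internally disjoint paths, or attached trees), each excess edge multiplies $|\rho_\alpha|^{2}$ by a factor $\tau^{4}$ while contributing at most $n\cdot\mathrm{poly}(D)$ to the enumeration; the hypothesis $\tau D^{4}\le 0.1$ absorbs the $\mathrm{poly}(D)$ combinatorial overhead and keeps the per-excess-edge ratio bounded by $n^{-\delta/2}$, so summing over all graph shapes of $\alpha\cup\{(1,2)\}$ with $|\alpha|\le D$ preserves the total bound $O(\tau^{2})$.

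The main obstacle is the structural step --- proving the vanishing and the quantitative $\tau$-suppression uniformly in $\alpha$. The cancellations in the recursion are conceptually analogous to those in \cite{schramm2020computational} for planted clique, but there $\pi_\alpha$ depends only on $|V(\alpha)|$, whereas here $W$ is a random geometric graph, so $\pi_\alpha$ genuinely depends on the full graph shape of $\alpha\cup\{(1,2)\}$. I would handle this via a block-tree decomposition of $\alpha\cup\{(1,2)\}$ into its $2$-edge-connected core (which must contain $(1,2)$ whenever $\rho_\alpha\neq 0$) and attached trees, inducting first on the size of the tree decorations to extract the $\tau^{2e(\alpha)}$ suppression, then on the number of cycles in the core to obtain the SAW bound.
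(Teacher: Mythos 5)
Your overall framing is right: reduce to bounding $\sum_{\alpha}\rho_\alpha^2\lambda^{2|\alpha|}$ via Proposition~\ref{prop:lower-bound-estimation-general}, establish a pointwise structural bound on $\rho_\alpha$, show $\rho_\alpha$ vanishes unless $\alpha$ is connected and contains both vertices $1,2$ (equivalently, contains a $1$-to-$2$ SAW), and then sum. That is exactly the skeleton the paper uses, with Lemmas~\ref{lem:empty-intersection}, \ref{lem:zero-cumulant}, \ref{lem:kappa-bd} playing the role of your structural step and Proposition~\ref{prop:corr-bound} doing the summation. Your pendant-cancellation observation is also correct (attaching a pendant edge to a $1$-to-$2$ path does cost an extra $\tau^2$ in $\rho_\alpha$, as the recursion shows).

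The genuine gap is your claimed structural bound $|\rho_\alpha|\le c_{|\alpha|}\,\tau^{s(\alpha)+2e(\alpha)}$, which is false when the $2$-edge-connected core of $\alpha\cup\{(1,2)\}$ is not a single cycle. Take the four-cycle $\alpha=\{(1,3),(3,2),(2,4),(4,1)\}$: here $s(\alpha)=2$ and $e(\alpha)=2$, so you are claiming $\rho_\alpha=O(\tau^6)$. But a direct computation from the recursion \eqref{eq:rho-recursion} gives $\rho_\alpha=\Theta(\tau^3)$: one finds $\pi_\alpha=\tfrac{7}{12}\tau^3$, $\rho_\varnothing P_{\alpha\varnothing}=\tfrac23\tau^4$, and the two SAW subgraphs $\{(1,3),(3,2)\}$ and $\{(2,4),(4,1)\}$ each contribute $\rho_\beta P_{\alpha\beta}\approx\tfrac34\tau^4$, so nothing cancels the $\tau^3$ term coming from $\pi_\alpha$. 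Cycles in the core do \emph{not} produce the $\tau^2$-per-excess-edge suppression that pendant vertices do, so the block-tree decomposition you sketch cannot deliver the exponent $s+2e$. The correct and provable bound (Lemma~\ref{lem:kappa-bd}) is the weaker $|\rho_\alpha|\le(1+\tau|\alpha|^4)(|\alpha|+1)^{|\alpha|}(\tau|V(\alpha)|)^{|V(\alpha)|-1}$, i.e. $\tau^{|V(\alpha)|-1}$ up to polynomial-in-$D$ factors; for the four-cycle that exponent is $3$, matching reality. Note that $|V(\alpha)|-1$ can be strictly \emph{smaller} than $s(\alpha)+2e(\alpha)$, so your bound is not just unproven but genuinely too strong.

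Fortunately, the weaker bound is all you need: with $|\rho_\alpha|\lesssim\tau^{|V(\alpha)|-1}$ and $\lambda=O(1)$, the sum over connected $\alpha$ containing $1,2$ with $|V(\alpha)|=v$ and $|\alpha|=\ell$ contributes $n^{v-2}\cdot\mathrm{poly}(D)\cdot\tau^{2(v-1)}\lambda^{2\ell}=\tau^2\lambda^2(n\tau^2\lambda^2)^{v-2}\cdot\lambda^{2(\ell-v+1)}\cdot\mathrm{poly}(D)$, and the conditions $n\tau^2\lambda^2\le n^{-\delta}$, $\lambda=O(1)$, $D=o((\log n/\log\log n)^2)$, $\tau D^4\le 0.1$ control the geometric series and the combinatorial overhead exactly as you describe for the SAW case. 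So the fix is to replace your structural target by the paper's $\tau^{|V(\alpha)|-1}$ bound (which is provable by the same pendant-free induction you have in mind, simply without trying to extract extra $\tau$'s from cycles) and rerun the summation; everything else in your plan then goes through.
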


Let us discuss the conditions in \eqref{eq:recover-lower-cond}, which are analogous to those in \eqref{eq:detect-lower-cond}.
First, the main condition is $n \tau^2 \lambda^2 \le n^{-\delta}$, which can be weakened to $n \tau^2 \lambda^2 \le n^{-o(1)}$ for an appropriately defined $o(1)$ quantity by a closer inspection of the above proof.
Also, the degree $D$ can be polylogarithmic in $n$ or even $n^{o(1)}$ if $\lambda$ is sufficiently small.
Finally, the technical condition $\tau D^4 \le 0.1$ is inactive if $n \lambda^2 \ge 1$; even if it is active, the condition is mild because the interesting regime of small-world networks is where the bandwidth $n \tau$ is much smaller than the total number of vertices $n$.

Theorem~\ref{thm:recover-lower} is proved in Section~\ref{sec:recover-lower} and we now provide a sketch.
To apply Proposition~\ref{prop:lower-bound-estimation-general}, we note that model~$\cP$ in Definition~\ref{def:mod-p} is a special case of the model in Definition~\ref{def:general-binary-model}.
Namely, let $N = \binom{n}{2}$, use an index pair $(i,j) \in \binom{[n]}{2}$ instead of a single index, and let
\begin{equation}
W = \left\{ (i,j) \in \binom{[n]}{2} \,:\, \dist(z_i,z_j) \le \tau/2 \right\}. 
\label{eq:W-meaning}
\end{equation}
In addition, we have
$$
\chi = \bbone\{\dist(z_{1},z_{2}) \le \tau/2\} = \bbone\{(1, 2) \in W\} .
$$
Proposition~\ref{prop:lower-bound-estimation-general} then implies that 
\begin{equation}
\Corr_{\le D}^2 \le \sum_{\alpha \subseteq \binom{[n]}{2} \,:\, |\alpha| \le D} \rho_\alpha^2 \, \lambda^{2 |\alpha|} ,
\label{eq:corr-rho-bound}
\end{equation}
where $\rho_\alpha$ is defined recursively by
$\rho_\varnothing = \p\{\dist(z_{1},z_{2}) \le \tau/2\} = \tau$, 
and
\begin{equation}
\rho_\alpha  
= \frac{1}{P_{\alpha \alpha}} \bigg( \p\{\alpha \cup \{(1, 2)\} \subseteq W\} - \sum_{\beta \subsetneq \alpha} \rho_\beta \, P_{\alpha \beta} \bigg) .
\label{eq:rho-recursion}
\end{equation}
Then the bulk of the proof consists in bounding $\rho_\alpha^2$ for each $\alpha$ using the above recursion.
This is done in Section~\ref{sec:recover-lower}, eventually leading to the bounds on $\Corr_{\le D}^2$ in Proposition~\ref{prop:corr-bound}.
Theorem~\ref{thm:recover-lower} then follows as a consequence.

The recursive definition \eqref{eq:rho-recursion} is similar to that for joint cumulants of the random variables $\chi$ and $(A_{ij})_{(i,j) \in \alpha}$ (see \cite{schramm2020computational}).
Intuitively, for each $\alpha \subseteq \binom{[n]}{2}$, the cumulant-like quantity $\rho_\alpha$ measures the amount of ``information'' $(A_{ij})_{(i,j) \in \alpha}$ contains about the estimand $\chi$.
The above lower bound controls the total amount of information that all subgraphs with at most $D$ edges have about $\chi$.
On the other hand, if $\rho_\alpha$ is large for a particular choice of $\alpha$, then the corresponding subgraph $(A_{ij})_{(i,j) \in \alpha}$ may be useful for recovering $\chi$.
The analysis of $\rho_\alpha$ in Proposition~\ref{prop:corr-bound} turns out to suggest that we should consider self-avoiding walks between vertices $1$ and $2$, which we study in the next subsection for the upper bound.

\subsection{Upper bound}

Similar to $\bar{A}$ in \eqref{eq:def-a-bar}, we consider a standardized version $\tilde A$ of the observed graph, defined by
\begin{equation}
\tilde A_{ij} := \frac{A_{ij} - q}{\sqrt{q(1-q)}} 
\label{eq:def-a-tilde}
\end{equation}
for $(i,j) \in \binom{[n]}{2}$.
Compared to \eqref{eq:def-a-bar}, the parameter $r$ is replaced by $q$ in \eqref{eq:def-a-tilde} because model $\cQ$ is irrelevant for the recovery problem.
Moreover, for $\alpha \subset \binom{[n]}{2}$, define
\begin{equation}
\tilde A_\alpha := \prod_{(i,j) \in \alpha} \tilde A_{ij} .
\label{eq:def-a-alpha}
\end{equation}

As discussed above, the proof of the recovery lower bound suggests that self-avoiding walks between vertices $1$ and $2$ are informative about $\chi = \bbone\{\dist(z_1,z_2) \le \tau/2\}$, which motivates us to consider the following.
Fix an integer $\ell \ge 1$.
Let $\SAW$ be the set of all length-$(\ell+1)$ self-avoiding walks from vertex $1$ and to vertex $2$, i.e.,
\begin{equation}
\SAW := \Big\{ \{(1,i_1), (i_1,i_2), (i_2, i_3), \ldots , (i_{\ell-1}, i_\ell), (i_\ell, 2)\} : i_1, \dots, i_\ell, 1, 2 \text{ are all distinct} \Big\} .
\label{eq:def-saw}
\end{equation}
Define the signed count of $\SAW$ in the observed graph $A$ as
\begin{equation}
T(A) := \sum_{\alpha\in\SAW}\tilde{A}_\alpha = \sum_{i_1\neq \cdots \neq i_\ell \ne 1 \ne 2} \tilde{A}_{1i_1}\tilde{A}_{i_1i_2} \tilde A_{i_2 i_3} \cdots \tilde{A}_{i_{\ell-1} i_\ell} \tilde{A}_{i_\ell 2}.
\label{eq:def-stat-t}
\end{equation}
As discussed above, this statistic has appeared in, e.g., \cite{hopkins2017efficient} for community detection.
The following theorem shows that the statistic $T(A)$ achieves weak recovery of $\chi$, and its proof can be found in Section~\ref{sec:recover-upper-proof}.

\begin{theorem}[Recovery upper bound]
\label{thm:recover-upper}
Consider Problem~\ref{prob:recover}.
Suppose that $C q \le p \le C' q$ for constants $C' > C > 1$.
For any constant $\delta \in (0,0.1)$, fix an integer $\ell > 1/\delta$.
The degree-$(\ell+1)$ polynomial $T(A)$ defined in \eqref{eq:def-stat-t} weakly recovers $\chi = \bbone\{\dist(z_1,z_2) \le \tau/2\}$ given $A \sim \cP$ in the sense of Definition~\ref{def:weak-recovery}, if 
\begin{equation}
n \tau^2 p \ge n^{\delta} , \qquad 
\tau = o(1) .
\label{eq:recover-upper-cond}
\end{equation}
\end{theorem}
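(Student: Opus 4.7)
The plan is to prove weak recovery by a direct first-and-second moment calculation for $T(A)$. The key reduction is that, conditional on $z=(z_1,\dots,z_n)$, the edges $A_{ij}$ are independent Bernoullis, so for any edge set $\alpha$,
\[
\E_\cP[\tilde A_\alpha\mid z]=\lambda^{|\alpha|}\prod_{(i,j)\in\alpha}\1\{\dist(z_i,z_j)\le\tau/2\},
\qquad
\E_\cP[\tilde A_{ij}^2\mid z]=1+(u-1)\1\{\dist(z_i,z_j)\le\tau/2\},
\]
where $u:=\lambda^2+p(1-p)/(q(1-q))=\Theta(1)$ under $p\asymp q$. A useful basic fact, proved by iterated conditioning along the walk, is that for any subset $S$ of the edges of a self-avoiding walk, $\p_z\{\text{all edges of }S\text{ are in-cycle}\}=\tau^{|S|}$ (the constraints decouple into $|S|$ independent Bernoulli$(\tau)$ steps on the circle).

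\textbf{First moment.} For each $\alpha\in\SAW$ with $\alpha=\{(1,i_1),(i_1,i_2),\dots,(i_\ell,2)\}$,
\[
\E_\cP[\tilde A_\alpha\,\chi]=\lambda^{\ell+1}\,\E_z\Bigl[\chi\prod_{e\in\alpha}\1\{\dist(z_e)\le\tau/2\}\Bigr].
\]
Parametrizing by the increments $\Delta_k:=z_{i_k}-z_{i_{k-1}}$ on the circle (with $z_{i_0}=z_1$, $z_{i_{\ell+1}}=z_2$) converts the joint event into $\{(\Delta_1,\dots,\Delta_{\ell+1})\in[-\tau/2,\tau/2]^{\ell+1}:|\sum_k\Delta_k|\le\tau/2\}$, whose Lebesgue measure equals $c_\ell\,\tau^{\ell+1}$ for an explicit positive constant $c_\ell$ depending only on $\ell$ (here $\tau=o(1)$ rules out wraparound contributions). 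Summing over the $(n-2)(n-3)\cdots(n-\ell-1)=\Theta(n^\ell)$ SAWs gives $\E_\cP[T(A)\,\chi]=\Theta(n^\ell\lambda^{\ell+1}\tau^{\ell+1})$.

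\textbf{Second moment and main obstacle.} Expanding $\E_\cP[T(A)^2]=\sum_{\alpha,\alpha'}\E_\cP[\tilde A_\alpha\tilde A_{\alpha'}]$ and writing $\gamma=\alpha\cap\alpha'$, $\beta=\alpha\triangle\alpha'$,
\[
\E_\cP[\tilde A_\alpha\tilde A_{\alpha'}]=\lambda^{|\beta|}\sum_{S\subseteq\gamma}(u-1)^{|S|}\,\p_z\bigl\{\text{all edges of }\beta\cup S\text{ in-cycle}\bigr\}.
\]
The diagonal $\alpha=\alpha'$ contributes $|\SAW|\cdot(1+(u-1)\tau)^{\ell+1}=\Theta(n^\ell)$. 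For the off-diagonal, I would classify each pair $(\alpha,\alpha')$ by the isomorphism type of the union graph $H=\alpha\cup\alpha'$, recording the number of shared edges $k=|\gamma|$, the number of shared vertices, and the way shared edges partition into maximal paths inside each walk. For a shape with $v$ vertices, there are at most $O(n^{v-2})$ realizations (since $1,2$ are fixed), and the in-cycle probability of any spanning subgraph is at most $\tau^{v-1}$ (the rank of a spanning tree; cycle-closing edges neither help nor hurt the bound once $\tau=o(1)$). The main obstacle is to prove that after summing these contributions over all intersection patterns of two length-$(\ell+1)$ SAWs one has
\[
\E_\cP[T(A)^2]=O\!\bigl(n^\ell\bigr)+O\!\bigl((\E_\cP[T(A)])^2\bigr).
\]
I would execute this by decomposing $H$ into the shared sub-forest together with pairs of disjoint geometric ``excursions'' connecting consecutive shared vertices; each excursion carries a factor $(n\lambda^2\tau^2)^{\text{length}}$, and the product collapses to the two terms above because $\ell$ is a fixed integer (so only boundedly many shape types occur). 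This parallels the SAW analysis in \cite{hopkins2017efficient}, but with the crucial twist that the in-cycle probabilities for each shape are computed from the volume of a slab in the increment variables rather than from independent Bernoulli factors, which is the source of the technical delicacy.

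\textbf{Conclusion.} Using $\sqrt{\E_\cP[T(A)^2]}\lesssim\sqrt{n^\ell}+|\E_\cP[T(A)]|$ together with $\lambda^2\asymp p$,
\[
\frac{\E_\cP[T(A)\,\chi]}{\sqrt{\E_\cP[T(A)^2]}}\gtrsim\min\!\Bigl(c_\ell,\;n^{\ell/2}\lambda^{\ell+1}\tau^{\ell+1}\Bigr)\gtrsim\min\!\Bigl(1,\;\sqrt{p}\,(np\tau^2)^{\ell/2}\,\tau\Bigr).
\]
Under $np\tau^2\ge n^\delta$ with $\ell>1/\delta$, one has $\sqrt{p}\,(np\tau^2)^{\ell/2}\ge n^{(\delta(\ell+1)-1)/2}\to\infty$, so the ratio is $\omega(\tau)=\omega(\E_\cP[\chi])$, which is weak recovery in the sense of Definition~\ref{def:weak-recovery}.
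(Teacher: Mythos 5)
Your overall strategy (first-moment lower bound, second-moment upper bound on the signed SAW count $T$) is the same as the paper's, and your first-moment computation---parametrizing by increments $\Delta_k$, using the slab of volume $c_\ell\tau^{\ell+1}$, and concluding $\E_\cP[T\chi]\asymp n^\ell\lambda^{\ell+1}\tau^{\ell+1}$---agrees with what Proposition~\ref{prop:recovery-expectation} gives. However, the second-moment bound you set out to prove, $\E_\cP[T^2]=O(n^\ell)+O\bigl((\E_\cP[T])^2\bigr)$, is false throughout the parameter range of the theorem, and this gap is fatal to the proposal as written.

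The simplest counterexample is already visible in the ``vertex-disjoint'' off-diagonal pairs $\alpha\ne\alpha'$ with $V(\alpha)\cap V(\alpha')=\{1,2\}$. For such a pair, $\alpha\cup\alpha'$ is a $(2\ell+2)$-cycle through $1$ and $2$; conditional on the full vector $z$, $\tilde A_\alpha$ and $\tilde A_{\alpha'}$ are independent, but they are positively correlated through $(z_1,z_2)$, and $\E_\cP[\tilde A_\alpha\tilde A_{\alpha'}]=\lambda^{2\ell+2}\,\p_z\{\alpha\cup\alpha'\subseteq W\}=\Theta(\lambda^{2\ell+2}\tau^{2\ell+1})$, via a spanning path of $2\ell+1$ edges plus a $\Theta(1)$ cycle-closing factor. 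Summing over the $\Theta(n^{2\ell})$ such pairs gives $\Theta\bigl(n^{2\ell}\lambda^{2\ell+2}\tau^{2\ell+1}\bigr)=\Theta\bigl(\tau^{-1}(\E_\cP[T])^2\bigr)$, which diverges relative to $(\E_\cP[T])^2$ since $\tau=o(1)$, and one can check using $(n\lambda^2\tau^2)^\ell\ge n^{\delta\ell}>n$ and $\lambda^2\tau\gtrsim n^{\delta-1}$ that it also exceeds $n^\ell$ under \eqref{eq:recover-upper-cond} with $\ell>1/\delta$. Since every summand $\E_\cP[\tilde A_\alpha\tilde A_{\alpha'}]$ is nonnegative (as $\lambda>0$), no cancellation can rescue the claim. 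Conceptually, $\E_\cP[T]$ is artificially small because $\E_\cP[T\mid z_1,z_2]$ vanishes for typical $(z_1,z_2)$ and is $\Theta(n^\ell\tau^\ell\lambda^{\ell+1})$ on an event of measure $\Theta(\tau)$; any second-moment bound must pay the Jensen penalty $\E\bigl[(\E_\cP[T\mid z_1,z_2])^2\bigr]\asymp\tau^{-1}(\E_\cP[T])^2$, which your excursion heuristic $(n\lambda^2\tau^2)^{\mathrm{length}}$ does not see. The pairs sharing exactly one boundary edge give a further $\Theta(n^{2\ell-1}\lambda^{2\ell}\tau^{2\ell-1})$ term with the same issue.

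The paper avoids the trap by conditioning on $(z_1,z_2)$ \emph{before} the SAW-pair enumeration: it bounds $\Var(T\mid z_1,z_2)$ via Proposition~\ref{prop:recovery-variance} (built on Lemmas~\ref{lem:variance summand bound}--\ref{lem:vce-excess}) and bounds $\E\bigl[(\E[T\mid z_1,z_2])^2\bigr]$ separately from Proposition~\ref{prop:recovery-expectation}, then adds. The resulting correlation lower bound is $\Omega\bigl(\min\{\tau^{1/2},\ n^{\ell/2}\tau^{\ell+1}\lambda^{\ell+1},\ n^{1/2}\tau^2\lambda,\ n^{\ell/2-1/4}\tau^{\ell+1/2}\lambda^{\ell-1/2}\}\bigr)$, which is genuinely weaker than your claimed $\min(c_\ell,\cdot)$---the $\tau^{1/2}$ term shows the hypothesis $\tau=o(1)$ is load-bearing---but each term is still $\omega(\tau)$ under \eqref{eq:recover-upper-cond}. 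To repair your approach you must enumerate and include these additional second-moment terms, at which point your proof essentially coincides with Section~\ref{sec:recover-upper-proof}.
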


If $p$ and $q$ are of the same order, then we have $\lambda = \frac{p-q}{\sqrt{q(1-q)}} = \Theta(p^{1/2})$.
Therefore, the main condition $n \tau^2 p \ge n^{\delta}$ in \eqref{eq:recover-upper-cond} matches the first condition in \eqref{eq:recover-lower-cond} up to an $n^{\delta}$ factor; they together give the recovery threshold stated in Theorem~\ref{thm:main-informal}.
We can also obtain a more general condition for the upper bound using Propositions~\ref{prop:recovery-expectation} and~\ref{prop:recovery-variance}, but the condition is not tight in the regime where $p/q \ge n^c$ for a constant $c>0$.
Proving a tight condition requires more technical work beyond the scope of this paper.
Moreover, as we have explained, the condition $\tau = o(1)$ in \eqref{eq:recover-upper-cond} is natural because the bandwidth is usually much smaller than the total number of vertices.

\medskip

We have focused on weak recovery in the sense of Definition~\ref{def:weak-recovery} and established the detection-recovery gap in the framework of low-degree polynomials.
Let us now consider the more practical problem of exactly recovering the indicator $\chi = \bbone\{\dist(z_1, z_2) \le \tau/2\}$ with high probability using a polynomial-time algorithm.
Towards this end, fix a quantity $\epsilon \in (0, \tau/2)$ and define
\begin{equation}
\kappa = \kappa(\eps) := \frac{\E\kr{T\cond \dfrak(z_1,z_2)=\frac{\tau}{2}} + \E\kr{T\cond \dfrak(z_1,z_2)=\frac{\tau}{2}+\epsilon}}{2}.
\label{eq:def-kappa}
\end{equation}
By Proposition~\ref{prop:recovery-expectation}, the quantity $\kappa$ can be computed explicitly.
We then threshold the statistic $T(A)$ in \eqref{eq:def-stat-t} at $\kappa$ to obtain the estimator 
$$
\hat \chi := \bbone\{T(A) \ge \kappa\}.
$$
The following result is a consequence of our analysis of the statistic $T(A)$, and its proof is deferred to the end of Section~\ref{sec:recover-upper-proof}.

\begin{theorem}
\label{thm:exact-recovery}
In the setting of Theorem~\ref{thm:recover-upper}, we additionally assume $\ell > 3/\delta$ and set $\eps := \tau n^{-\delta/4}$.
Then the estimator $\hat \chi = \bbone\{T(A) \ge \kappa\}$ of $\chi = \bbone\{\dist(z_1,z_2) \le \tau/2\}$ satisfies
$$
\E\kr{(\hat \chi - \chi)^2} = \Pb\br{\hat\chi \neq \chi} \le C_\ell \, \tau n^{-\delta/2}
$$
for a constant $C_\ell > 0$ depending only on $\ell$.
\end{theorem}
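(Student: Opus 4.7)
The plan is to condition on the latent distance $d := \mathrm{dist}(z_1, z_2)$, write $g(d) := \mathbb{E}[T(A) \mid \mathrm{dist}(z_1, z_2) = d]$, and apply Chebyshev's inequality around $g(d)$ using the explicit conditional-expectation formula from Proposition~\ref{prop:recovery-expectation}. A preliminary step is to verify from that formula that $g$ is monotonically decreasing in $d$ on $[0, 1/2]$: intuitively, as $z_1, z_2$ move apart, fewer length-$(\ell+1)$ self-avoiding walks from $1$ to $2$ can route through the dense arc joining them. Monotonicity gives $g(d) \ge g(\tau/2)$ for all $d \le \tau/2$ and $g(d) \le g(\tau/2 + \epsilon)$ for all $d \ge \tau/2 + \epsilon$, so by the definition of $\kappa$ the margin
\[
M \;:=\; \tfrac{1}{2}\bigl(g(\tau/2) - g(\tau/2 + \epsilon)\bigr)
\]
satisfies $|g(d) - \kappa| \ge M$ whenever $d \notin (\tau/2, \tau/2 + \epsilon]$.

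I would then decompose
\[
\mathbb{P}(\hat\chi \ne \chi) \;\le\; \mathbb{P}\bigl(d \le \tau/2,\, T(A) - g(d) \le -M\bigr) \;+\; \mathbb{P}\bigl(d \in (\tau/2, \tau/2 + \epsilon]\bigr) \;+\; \mathbb{P}\bigl(d > \tau/2 + \epsilon,\, T(A) - g(d) \ge M\bigr).
\]
The middle ``boundary'' term equals $2\epsilon$ by the uniform density of $d$ on $[0, 1/2]$. The two outer terms are controlled by the conditional Chebyshev bound $\mathbb{P}(|T(A) - g(d)| \ge M \mid d) \le \mathrm{Var}[T(A) \mid d]/M^2$, followed by integration of the density of $d$ on the respective intervals.

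The quantitative core is to show that $\mathrm{Var}[T(A) \mid d]/M^2$ is small enough that the Chebyshev contributions are subsumed by the claimed rate. A linearization of the formula in Proposition~\ref{prop:recovery-expectation} near $d = \tau/2$ should give $M$ of order $(\epsilon/\tau) = n^{-\delta/4}$ times the full planted signal, while Proposition~\ref{prop:recovery-variance} supplies a uniform-in-$d$ bound on the conditional variance comparable to the unconditional variance already analyzed in the weak-recovery proof. The signal-to-noise ratio there grows polynomially in $n$ with $\ell$, and the stronger assumption $\ell > 3/\delta$ is precisely what allows this ratio to absorb the $n^{-\delta/4}$ loss from $\epsilon/\tau$ and still leave enough reserve for the Chebyshev tails to be at most of order $\tau n^{-\delta/2}$; combining with the boundary piece yields the theorem.

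The main obstacles are (i) establishing monotonicity of $g$ globally on $[0, 1/2]$, not just locally near $d = \tau/2$, and (ii) sharply lower-bounding the slope of $g$ at $d = \tau/2$ so that the $\epsilon/\tau$ scaling of $M$ is correct. Both rely on fine-grained combinatorial control of how self-avoiding walks from $1$ to $2$ traverse the dense arc through intermediate vertices inside and outside $[z_1, z_2]$, which should be extractable from the same expansion already carried out in the proof of Proposition~\ref{prop:recovery-expectation}.
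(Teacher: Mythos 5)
Your decomposition into the three regimes $d\le \tau/2$, $d\in(\tau/2,\tau/2+\epsilon]$, and $d>\tau/2+\epsilon$, with the middle piece charged $2\epsilon$, is exactly how the paper proceeds, and the Chebyshev-around-$g(d)$ argument with margin $M=\Delta(\epsilon)/2$ does handle the two ``near-boundary'' pieces (after noting that the monotonicity of $g$ on $[0,(\ell+1)\tau/2]$ is already established in Proposition~\ref{prop:recovery-expectation}, and that the slope bound you need is exactly Lemma~\ref{lem:separation lemma}). However, there is a real gap in the treatment of the outer region $d>\tau/2+\epsilon$. You propose to use a \emph{uniform-in-$d$} variance bound (the first bound in Proposition~\ref{prop:recovery-variance}) together with the same margin $M$. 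In the subregion $d>\frac{(\ell+1)\tau}{2}$ — which has Lebesgue measure $\Theta(1)$, not $\Theta(\tau)$ — this gives $\Var[T\mid d]/M^2$ with a dominant term of order $n^{\delta/2}/(n\tau^2\lambda^2) = \Theta(n^{-\delta/2})$, which after integrating over measure $\Theta(1)$ yields only $O(n^{-\delta/2})$, not $O(\tau n^{-\delta/2})$. Your suggestion that $\ell>3/\delta$ supplies the slack does not help here: that middle term in the variance-to-margin ratio is independent of $\ell$, so it cannot be driven down by increasing $\ell$, and in general $n^{-\delta/2} \gg \tau n^{-\delta/2}$.

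The paper closes this gap with two devices that your proposal omits, both specific to $d>\frac{(\ell+1)\tau}{2}$: (a) Proposition~\ref{prop:recovery-expectation} shows $\E[T\mid d]=0$ there, so the Chebyshev threshold can be taken to be $\kappa$ itself rather than the much smaller $\Delta(\epsilon)/2$ (a gain of $n^{\delta/2}$ in the squared margin); and (b) the \emph{second}, sharper bound in Proposition~\ref{prop:recovery-variance}, proved via Lemma~\ref{lem:variance summand bound ver 2}, shows that the conditional variance picks up an extra factor of $\tau$ when $\dist(z_1,z_2)>\frac{(\ell+1)\tau}{2}$ (because vertices $1$ and $2$ must then lie in distinct connected components of $\alpha\triangle\beta$, so one can anchor the probability computation at both of them). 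Both (a) and (b) are needed: with the uniform variance bound even the improved $\kappa$ margin only gives $O(n^{-\delta})$ per point, which can still exceed $\tau n^{-\delta/2}$ since $\tau$ may be as small as $\Theta(n^{(\delta-1)/2})$. Without these two refinements the argument as you have sketched it establishes $O(n^{-\delta/2})$ total error, which is strictly weaker than the claimed $O(\tau n^{-\delta/2})$ and in the relevant regime can even exceed the error $\tau$ of the trivial estimator $\hat\chi\equiv 0$.
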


\noindent
Since $\dist(z_1,z_2) \le \tau/2$ with probability $\tau$, a trivial estimator $\tilde \chi \equiv 0$ makes an error with probability $\tau$.
Therefore, the above error probability $O(\tau n^{-\delta/2})$ is small.

An immediate consequence of the above result is that we can estimate the underlying random geometric graph consistently.
To be more precise, we denote the adjacency matrix of the geometric graph by $X \in \{0,1\}^{n \times n}$, which is defined by $X_{ij} := \bbone\{\dist(z_i,z_j) \le \tau/2\}$.

\begin{corollary}
\label{cor:rgg}
In the setting of Theorem~\ref{thm:exact-recovery}, there is an estimator $\hat X \in \{0,1\}^{n \times n}$ of the random geometric graph $X \in \{0,1\}^{n \times n}$ such that
$$
\E[\|\hat X - X\|_F^2] \le C_\ell \, \tau n^{2 - \delta/2}
$$
for a constant $C_\ell > 0$ depending only on $\ell$.
\end{corollary}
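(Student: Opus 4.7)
The plan is to apply Theorem~\ref{thm:exact-recovery} pair by pair and then sum the entrywise error using symmetry. For each $(i,j) \in \binom{[n]}{2}$, let $T_{ij}(A)$ be the statistic in \eqref{eq:def-stat-t} with endpoints $i$ and $j$ in place of $1$ and $2$, i.e., the signed count of length-$(\ell+1)$ self-avoiding walks between $i$ and $j$. Define $\hat X_{ij} := \bbone\{T_{ij}(A) \ge \kappa\}$, set $\hat X_{ji} := \hat X_{ij}$, and put $\hat X_{ii} := 0$ along the diagonal.

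The key observation is that model $\cP$ is permutation-invariant: the latent coordinates $z_1, \dots, z_n$ are i.i.d., so for any pair $(i,j)$ the joint distribution of $(X_{ij}, A)$ equals (after relabeling vertices) the joint distribution of $(X_{12}, A)$. The statistic $T_{ij}(A)$ is defined by the same formula as $T(A)$ after the same relabeling, and the threshold $\kappa$ depends only on the distribution, not on the particular vertex pair. Consequently, $\Pb[\hat X_{ij} \ne X_{ij}] = \Pb[\hat \chi \ne \chi]$ for every $(i,j) \in \binom{[n]}{2}$, and Theorem~\ref{thm:exact-recovery} bounds this common value by $C_\ell \tau n^{-\delta/2}$.

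Using $|\hat X_{ij} - X_{ij}|^2 = \bbone\{\hat X_{ij} \ne X_{ij}\}$ and linearity of expectation,
\begin{align*}
\E\bigl[\|\hat X - X\|_F^2\bigr]
&= \sum_{i \ne j} \Pb\bigl[\hat X_{ij} \ne X_{ij}\bigr]
= 2 \sum_{(i,j) \in \binom{[n]}{2}} \Pb\bigl[\hat \chi \ne \chi\bigr] \\
&\le 2 \binom{n}{2} \cdot C_\ell \, \tau \, n^{-\delta/2}
\le C_\ell' \, \tau \, n^{2 - \delta/2},
\end{align*}
which is the desired bound after relabeling the constant.

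There is essentially no obstacle here beyond a careful appeal to symmetry; the content is entirely in Theorem~\ref{thm:exact-recovery}. The only minor point worth remarking on is that the estimator $\hat X$ is polynomial-time computable because each $T_{ij}(A)$ is a degree-$(\ell+1)$ polynomial in $A$ with $\ell = O(1/\delta)$, so computing all $\binom{n}{2}$ entries takes $n^{O(1/\delta)}$ time, which keeps the whole procedure efficient.
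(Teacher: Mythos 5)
Your argument is correct and is exactly the paper's argument: define $\hat X_{ij}$ by the same thresholded self-avoiding-walk statistic for each pair, invoke exchangeability of the model to transfer the per-pair error bound from Theorem~\ref{thm:exact-recovery}, and sum over the $n(n-1) \le n^2$ off-diagonal entries. The only thing you add beyond the paper's one-line justification is the explicit bookkeeping, which is fine.
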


\noindent
This result follows immediately from Theorem~\ref{thm:exact-recovery}, because by symmetry, it suffices to estimate each edge $X_{ij} = \bbone\{\dist(z_i,z_j) \le \tau/2\}$ in the same way as we did for $(i,j) = (1,2)$.

\section{Additional proofs}

\subsection{Detection lower bound}
\label{sec:detect-lower}

In this subsection, we establish Proposition~\ref{prop:adv-bound} which leads to Theorem~\ref{thm:detect-lower}.
Recall the models $\cP$ and $\cQ$ in Definitions~\ref{def:mod-p} and~\ref{def:mod-q} respectively.
For $\alpha \subseteq \binom{[n]}{2}$, define
$$
\eta(z; \alpha) := \big| \big\{(i,j) \in \alpha : \dist(z_i,z_j) \le \tau/2 \big\} \big| .
$$

\begin{lemma}
\label{lem:connected-alpha-bd}
For $\alpha \subseteq \binom{[n]}{2}$, let $v := |V(\alpha)|$ and suppose that the graph $\alpha$ is connected. Then we have 
$$
\E_z \Big[ \frac{1}{\tau^{\eta(z; \alpha)}} \Big] \le v^{2v} \tau^{v-|\alpha|-1} .
$$
\end{lemma}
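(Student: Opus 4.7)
The plan is to reduce the expectation to a product over a spanning tree of $\alpha$, exploiting that the short-edge indicators along a tree are mutually independent. For each $e = (i,j) \in \alpha$, let $X_e := \bbone\{\dist(z_i,z_j) \le \tau/2\}$, so that $\eta(z;\alpha) = \sum_{e \in \alpha} X_e$ and $\tau^{-\eta(z;\alpha)} = \prod_{e \in \alpha} \tau^{-X_e}$. Since $\alpha$ is connected, I would pick any spanning tree $T \subseteq \alpha$, with $|T| = v-1$ tree edges and $|\alpha \setminus T| = |\alpha| - v + 1$ cycle-creating edges.

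For the cycle-creating edges I would use only the crude pointwise bound $\tau^{-X_e} \le \tau^{-1}$, giving
\[
\tau^{-\eta(z;\alpha)} \le \tau^{-(|\alpha|-v+1)} \prod_{e \in T} \tau^{-X_e},
\]
which reduces the task to estimating $\E \prod_{e \in T} \tau^{-X_e}$.

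The key step, and the only nontrivial one, is the claim that $(X_e)_{e \in T}$ are mutually independent $\Bern(\tau)$ random variables. I would prove this by induction on $v$. In the inductive step, pick a leaf $\ell$ of $T$ with incident edge $e_\ell = (\ell, u)$; the remaining indicators $(X_e)_{e \in T \setminus \{e_\ell\}}$ are measurable with respect to $(z_i)_{i \ne \ell}$ and so are independent by the inductive hypothesis. Since $z_\ell \sim \Unif([0,1])$ is independent of everything else and $\p(\dist(z_\ell, z_u) \le \tau/2 \mid z_u) = \tau$ for every value of $z_u$, the variable $X_{e_\ell}$ is an independent $\Bern(\tau)$ random variable, completing the induction.

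Given independence, $\E[\tau^{-X_e}] = \tau \cdot \tau^{-1} + (1-\tau) = 2 - \tau$, and hence $\E \prod_{e \in T} \tau^{-X_e} = (2-\tau)^{v-1} \le 2^{v-1}$. Combining with the pointwise bound yields
\[
\E_z\!\left[\tau^{-\eta(z;\alpha)}\right] \le 2^{v-1} \, \tau^{v-|\alpha|-1} \le v^{2v} \, \tau^{v-|\alpha|-1},
\]
where the final inequality is the deliberately loose estimate $2^{v-1} \le v^{2v}$ valid for all $v \ge 2$ (and $v \ge 2$ whenever $\alpha$ has at least one edge). The only subtle point in the whole argument is the tree-indicator independence; the rest is a decomposition along a spanning tree followed by elementary algebra, with significant slack in the final constant.
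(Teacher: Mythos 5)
Your proof is correct and takes a genuinely different route from the paper's. The paper's argument works with the realized latent vector $z$: it partitions $V(\alpha)$ into ``close'' blocks, bounds the probability of each block structure by $(v\tau)^{v-m}$, uses connectivity to show $\eta(z;\alpha) \le |\alpha| - (m-1)$, and then sums over all $\le v^v$ partitions, which is where both factors of $v^v$ come from. You instead isolate the clean structural fact that, for a tree $T$ on the latent points, the short-edge indicators $(X_e)_{e\in T}$ are exactly i.i.d.\ $\Bern(\tau)$ --- a consequence (as your leaf-peeling induction shows) of the conditional law of $X_{e_\ell}$ given $z_u$ being $\Bern(\tau)$ independently of $z_u$, which in turn relies on the uniform measure on the circle being translation-invariant. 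Handling the $|\alpha|-v+1$ non-tree edges with the crude pointwise bound $\tau^{-X_e} \le \tau^{-1}$ then gives $\E_z[\tau^{-\eta(z;\alpha)}] \le (2-\tau)^{v-1}\tau^{v-|\alpha|-1} \le 2^{v-1}\tau^{v-|\alpha|-1}$, which is strictly stronger than the paper's $v^{2v}\tau^{v-|\alpha|-1}$ and certainly suffices. The one place where you should be a touch more explicit is the independence step: it is worth stating that $(X_e)_{e \ne e_\ell}$ is measurable with respect to $\sigma\bigl((z_i)_{i\ne\ell}\bigr)$ and that $X_{e_\ell}$ is independent of that whole $\sigma$-algebra (not merely of $z_u$), which is exactly what the constant conditional probability buys you. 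With that spelled out, the argument is complete, shorter than the paper's, and gives a better constant; the trade-off is that it leans on the exact translation-invariance of the uniform distribution on the circle, whereas the paper's block argument is more robust to perturbing that structure.
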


\begin{proof}
Suppose that $V(\alpha) = \{i_1, \dots, i_v\}$. 
For any realization of $z$, there is a unique partition $B_1 \sqcup \cdots \sqcup B_m$ of $\{i_1, \dots, i_v\}$ such that the following two conditions hold: 
\begin{enumerate}
\item \label{cond:1}
For any distinct $j, j' \in [m]$ and any $\ell \in B_j$ and $\ell' \in B_{j'}$, we have $\dist(z_\ell,z_{\ell'}) > \tau/2$;

\item \label{cond:2}
For any $j \in [m]$, $B_j$ cannot be partitioned into two sub-blocks satisfying Condition 1. 
\end{enumerate}
In other words, we partition $z_{i_1}, \dots, z_{i_v}$ into blocks so that the distance between two consecutive points in the same block is at most $\tau/2$.

Now fix a partition $\{i_1, \dots, i_v\} = B_1 \sqcup \cdots \sqcup B_m$. 
We claim that 
\begin{align}
\p_z \big\{ \text{Conditions~\ref{cond:1} and~\ref{cond:2} are satisfied for } B_1, \dots, B_m \big\} \le (v \tau)^{v - m}.
\label{eq:block-bd}
\end{align}
To prove \eqref{eq:block-bd}, it suffices to use Condition~\ref{cond:2}.
Fix $\ell_j \in B_j$ for $j \in [m]$. 
By Condition~\ref{cond:2}, for any $j \in [m]$ and $\ell \in B_j$, we have $\dist(z_\ell,z_{\ell_j}) \le |B_j| \, \tau/2 \le v \tau/2$. 
For any realization of $z_{\ell_1}, \dots, z_{\ell_m}$, it holds that
\begin{align*}
\p_z \big\{ \dist(z_\ell,z_{\ell_j}) \le v \tau/2 \text{ for all } \ell \in B_j \text{ and all } j \in [m] \mid z_{\ell_1}, \dots, z_{\ell_m} \big\}
\le \prod_{j=1}^m (v \tau)^{|B_j|-1} 
= (v \tau)^{v - m}.
\end{align*}
Then \eqref{eq:block-bd} follows. 

Since the graph $\alpha$ is connected, there are at least $m-1$ edges between vertices $\ell \in B_j$ and $\ell' \in B_{j'}$ for distinct $j, j' \in [m]$. 
If Condition~\ref{cond:1} is satisfied, then we have
\begin{align}
\eta(z; \alpha) 
= \big| \big\{(\ell,\ell') \in \alpha : \dist(z_\ell, z_{\ell'}) \le \tau/2 \big\} \big| 
\le |\alpha| - (m-1) .
\label{eq:cond-1-result}
\end{align}

Combining \eqref{eq:block-bd} and \eqref{eq:cond-1-result}, we obtain
\begin{align*}
\E_z \Big[ \frac{1}{\tau^{\eta(z; \alpha)}} \Big] 
\le \sum_{B_1 \sqcup \cdots \sqcup B_m = \{i_1, \dots, i_v\}} 
(v \tau)^{v-m} \cdot \frac{1}{\tau^{|\alpha| - (m-1)}} 
\le \sum_{B_1 \sqcup \cdots \sqcup B_m = \{i_1, \dots, i_v\}}  v^v \tau^{v-|\alpha|-1} .
\end{align*}
Finally, bound the number of partitions by $v^v$. 
\end{proof}

\begin{lemma}
\label{lem:general-alpha-bd}
For $\alpha \subseteq \binom{[n]}{2}$, let $v := |V(\alpha)|$, and let $m$ be the number of connected components of the graph $\alpha$.
Then we have 
$$
\E_z \Big[ \frac{1}{\tau^{\eta(z; \alpha)}} \Big] 
\le v^{2v} \tau^{v-|\alpha|-m} .
$$
\end{lemma}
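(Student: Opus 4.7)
The plan is to reduce to the connected case already handled in Lemma~\ref{lem:connected-alpha-bd}. Decompose the graph $\alpha$ into its connected components $\alpha_1, \dots, \alpha_m$, and let $v_i := |V(\alpha_i)|$ and $e_i := |\alpha_i|$. Since connected components are vertex-disjoint, we have $\sum_i v_i = v$ and $\sum_i e_i = |\alpha|$, and the coordinates of $z$ indexed by $V(\alpha_1), \dots, V(\alpha_m)$ are mutually independent because the entries of $z$ are i.i.d.

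Next, I would observe that the indicator count $\eta(z;\alpha)$ splits as $\eta(z;\alpha) = \sum_{i=1}^m \eta(z;\alpha_i)$, since each edge of $\alpha$ lies in exactly one component. Combined with the independence above, this gives the factorization
\[
\E_z\!\left[\frac{1}{\tau^{\eta(z;\alpha)}}\right]
= \prod_{i=1}^m \E_z\!\left[\frac{1}{\tau^{\eta(z;\alpha_i)}}\right].
\]
Applying Lemma~\ref{lem:connected-alpha-bd} to each connected component bounds the $i$-th factor by $v_i^{2 v_i}\, \tau^{v_i - e_i - 1}$.

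Finally, I would multiply out: the $\tau$-exponents add to $\sum_i (v_i - e_i - 1) = v - |\alpha| - m$, giving exactly the claimed power of $\tau$. For the combinatorial prefactor, since $v_i \le v$ for each $i$ and $\sum_i v_i = v$, we have $\prod_i v_i^{2 v_i} \le \prod_i v^{2 v_i} = v^{2v}$, which yields the desired bound $v^{2v}\, \tau^{v-|\alpha|-m}$. There is no genuine obstacle here — the only things to verify carefully are the additivity of $\eta$ over components and the independence of the relevant blocks of $z$, both of which are immediate from the definitions. The content of the lemma is essentially bookkeeping that upgrades the connected bound to a general one.
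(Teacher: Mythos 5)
Your proposal matches the paper's proof step for step: decompose $\alpha$ into vertex-disjoint connected components, use independence of the corresponding blocks of $z$ to factorize the expectation, apply Lemma~\ref{lem:connected-alpha-bd} componentwise, and combine using $v_i \le v$ and additivity of the exponents. Correct and identical in approach.
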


\begin{proof}
Let $\alpha^{(1)} \sqcup \cdots \sqcup \alpha^{(m)}$ denote the partition of $\alpha$ into connected components, and note that
$$
|\alpha| = \sum_{i=1}^m |\alpha^{(i)}|, \qquad
v = |V(\alpha)| = \sum_{i=1}^m |V(\alpha^{(i)})|, 
\qquad 
\eta(z; \alpha) = \sum_{i=1}^m \eta(z; \alpha^{(i)}).
$$
Lemma~\ref{lem:connected-alpha-bd} shows that for each connected component $\alpha^{(i)}$, we have
\begin{align*}
\E_z \Big[ \frac{1}{\tau^{\eta(z; \alpha^{(i)})}} \Big] 
\le |V(\alpha^{(i)})|^{2 |V(\alpha^{(i)})|} \, \tau^{|V(\alpha^{(i)})| - |\alpha^{(i)}| -1}
\le v^{2 |V(\alpha^{(i)})|} \, \tau^{|V(\alpha^{(i)})| - |\alpha^{(i)}| -1} .
\end{align*}
Crucially, the random variables $\eta(z; \alpha^{(1)}), \dots, \eta(z; \alpha^{(m)})$ are independent because the connected components have mutually disjoint vertex sets and thus involve independent collections of latent variables $z_j$.
We conclude that
\begin{align*}
\E_z \Big[ \frac{1}{\tau^{\eta(z; \alpha)}} \Big]
= \prod_{i=1}^m \E_z \Big[ \frac{1}{\tau^{\eta(z; \alpha^{(i)})}} \Big] 
\le \prod_{i=1}^m v^{2 |V(\alpha^{(i)})|} \, \tau^{|V(\alpha^{(i)})| - |\alpha^{(i)}| -1}
= v^{2v} \tau^{v-|\alpha|-m} .
\end{align*}
\end{proof}

\begin{lemma}
\label{lem:phi-expectation-bd}
For $\alpha \subseteq \binom{[n]}{2}$, let $v := |V(\alpha)|$, and let $m$ be the number of connected components of the graph $\alpha$.
Recall that $\mu = \frac{p-r}{\sqrt{r(1-r)}}$.
Then we have 
\begin{equation*}
\big| \E_{\cP} [ \phi_\alpha(A) ] \big| \le \Big(\frac{\mu}{1-\tau}\Big)^{|\alpha|} v^{2v} \tau^{v-m} .
\end{equation*}
\end{lemma}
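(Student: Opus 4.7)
The plan is to first do the edge-by-edge calculation of $\E_\cP[\phi_\alpha(A)\mid z]$ and then take the expectation over $z$ using Lemma~\ref{lem:general-alpha-bd}. Since conditional on $z$ the edges of $A$ are independent, we have $\E_\cP[\phi_\alpha(A)\mid z] = \prod_{(i,j)\in\alpha}\E_\cP[\bar A_{ij}\mid z]$. A short computation using $r = \tau p + (1-\tau)q$ gives
\[
\E_\cP[\bar A_{ij}\mid z] = \begin{cases} \mu & \text{if } \dist(z_i,z_j)\le \tau/2, \\[2pt] -\dfrac{\tau\mu}{1-\tau} & \text{otherwise},\end{cases}
\]
because $p-r = (1-\tau)(p-q)$ and $q-r = -\tau(p-q)$, so the ratio of the two conditional means is $-\tau/(1-\tau)$.

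Grouping factors according to whether each edge is ``planted'' (contributing to $\eta(z;\alpha)$) or not, I get
\[
\bigl|\E_\cP[\phi_\alpha(A)\mid z]\bigr| = |\mu|^{|\alpha|}\Big(\tfrac{\tau}{1-\tau}\Big)^{|\alpha|-\eta(z;\alpha)}.
\]
Using the trivial bound $(1-\tau)^{-(|\alpha|-\eta)}\le (1-\tau)^{-|\alpha|}$, which holds because $\eta\ge 0$ and $1-\tau\le 1$, this simplifies to
\[
\bigl|\E_\cP[\phi_\alpha(A)\mid z]\bigr| \le \Big(\tfrac{\mu}{1-\tau}\Big)^{|\alpha|}\,\tau^{|\alpha|-\eta(z;\alpha)} = \Big(\tfrac{\mu}{1-\tau}\Big)^{|\alpha|}\tau^{|\alpha|}\cdot\frac{1}{\tau^{\eta(z;\alpha)}}.
\]

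Finally, I take $\E_z$ of both sides, pull the deterministic factor out of the expectation, and invoke Lemma~\ref{lem:general-alpha-bd} to bound $\E_z[\tau^{-\eta(z;\alpha)}]\le v^{2v}\tau^{v-|\alpha|-m}$. The two powers of $\tau$ collapse to $\tau^{v-m}$, which produces exactly
\[
\bigl|\E_\cP[\phi_\alpha(A)]\bigr|\le \Big(\tfrac{\mu}{1-\tau}\Big)^{|\alpha|} v^{2v}\tau^{v-m},
\]
as claimed. There is no real obstacle here: all the combinatorial work has been front-loaded into Lemma~\ref{lem:general-alpha-bd}, and the only thing to be careful about is correctly identifying the conditional expectation of $\bar A_{ij}$ in the ``non-planted'' case and justifying the slightly lossy step $(1-\tau)^{-(|\alpha|-\eta)}\le (1-\tau)^{-|\alpha|}$, which is what produces the factor $(1-\tau)^{-|\alpha|}$ in the final bound.
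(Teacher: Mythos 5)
Your proof is correct and follows essentially the same approach as the paper: compute the conditional expectation of $\phi_\alpha(A)$ given $z$ (yielding $\mu$ on planted edges and $-\tau\mu/(1-\tau)$ on non-planted ones, which is exactly the ratio $\frac{p-r}{q-r}=\frac{\tau-1}{\tau}$ the paper exploits), bound $(1-\tau)^{-(|\alpha|-\eta)}\le(1-\tau)^{-|\alpha|}$, and then invoke Lemma~\ref{lem:general-alpha-bd} on $\E_z[\tau^{-\eta(z;\alpha)}]$. The only cosmetic difference is that you organize the algebra edge-by-edge in the normalized $\bar A$ coordinates, while the paper first pulls out the $(r(1-r))^{-|\alpha|/2}$ normalization and works with $(p-r)^{\eta}(q-r)^{|\alpha|-\eta}$; the two are identical in substance.
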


\begin{proof}
We have 
\begin{align*}
\E_{\cP} [ \phi_\alpha(A) ] 
&= \frac{1}{(r(1-r))^{|\alpha|/2}} \E_{\cP} \bigg[ \prod_{(i,j) \in \alpha} (A_{ij} - r) \bigg] \\
&= \frac{1}{(r(1-r))^{|\alpha|/2}} \E_z \bigg[ \prod_{(i,j) \in \alpha} \E[A_{ij} - r \mid z] \bigg] \\
&= \frac{1}{(r(1-r))^{|\alpha|/2}} \E_z \Big[ (p-r)^{\eta(z; \alpha)} (q-r)^{|\alpha| - \eta(z; \alpha)} \Big] .
\end{align*}
Recall that $r = \tau p + (1-\tau) q$ so that $\frac{p-r}{q-r} = \frac{\tau-1}{\tau}$, and $\mu = \frac{p-r}{\sqrt{r(1-r)}}$.
It follows that
$$
\E_{\cP} [ \phi_\alpha(A) ] 
= \bigg(\frac{\mu \tau}{\tau-1}\bigg)^{|\alpha|} \E_z \bigg[ \Big(\frac{\tau-1}{\tau}\Big)^{\eta(z; \alpha)} \bigg] .
$$
By Lemma~\ref{lem:general-alpha-bd}, we obtain
\begin{align*}
\big| \E_{\cP} [ \phi_\alpha(A) ] \big| 
\le \bigg(\frac{\mu \tau}{1-\tau}\bigg)^{|\alpha|} \, \E_z \Big[ \frac{1}{\tau^{\eta(z; \alpha)}} \Big] 
\le \bigg(\frac{\mu \tau}{1-\tau}\bigg)^{|\alpha|} \, v^{2v} \tau^{v-|\alpha|-m} ,
\end{align*}
finishing the proof.
\end{proof}

\begin{lemma}
\label{lem:phi-zero}
If the graph $\alpha$ has a dangling edge, i.e., an edge $(i,j)$ where $i$ is connected only to $j$ in $\alpha$, 
then $\E_{\cP}[\phi_\alpha(A)] = 0$.
\end{lemma}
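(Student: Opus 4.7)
The plan is to exploit the fact that the vertex $i$ in the dangling edge $(i,j)$ influences $\phi_\alpha(A)$ only through the single factor $\bar A_{ij}$, and then to integrate out $z_i$ to obtain a zero contribution from the matching condition $r = \tau p + (1-\tau) q$.

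First I would apply the tower property by conditioning on the latent vector $z$. Under $\cP$, the edges of $A$ are conditionally independent given $z$, so
\begin{equation*}
\E_{\cP}[\phi_\alpha(A)]
= \E_z\!\left[ \prod_{(k,l) \in \alpha} \E_{\cP}[\bar A_{kl} \mid z] \right]
= \E_z\!\left[ \E_{\cP}[\bar A_{ij} \mid z] \cdot \prod_{(k,l) \in \alpha \setminus \{(i,j)\}} \E_{\cP}[\bar A_{kl} \mid z] \right] .
\end{equation*}
The second factor depends on $z$ only through the coordinates $\{z_k : k \in V(\alpha) \setminus \{i\}\}$, since the remaining edges in $\alpha$ do not touch vertex $i$ (this is exactly the dangling-edge hypothesis). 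Let $z_{-i}$ denote all coordinates of $z$ other than $z_i$, and condition on $z_{-i}$ before integrating $z_i$:
\begin{equation*}
\E_{\cP}[\phi_\alpha(A)]
= \E_{z_{-i}}\!\left[ \E_{z_i}\!\bigl[\E_{\cP}[\bar A_{ij} \mid z] \bigr] \cdot \prod_{(k,l) \in \alpha \setminus \{(i,j)\}} \E_{\cP}[\bar A_{kl} \mid z_{-i}] \right] .
\end{equation*}

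Next I would evaluate the inner expectation over $z_i$. Because $z_j$ is fixed by the conditioning and $z_i \sim \Unif([0,1])$, we have $\p\{\dist(z_i, z_j) \le \tau/2 \mid z_{-i}\} = \tau$, so
\begin{equation*}
\E_{z_i}\!\bigl[\E_{\cP}[\bar A_{ij} \mid z] \bigr]
= \tau \cdot \frac{p - r}{\sqrt{r(1-r)}} + (1-\tau) \cdot \frac{q - r}{\sqrt{r(1-r)}}
= \frac{\tau p + (1-\tau) q - r}{\sqrt{r(1-r)}} = 0 ,
\end{equation*}
by the definition $r = \tau p + (1-\tau) q$ of $\cQ$. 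Plugging this back yields $\E_{\cP}[\phi_\alpha(A)] = 0$, as claimed.

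The argument is essentially routine once the tower property and independence given $z$ are in place; there is no real obstacle. The only point worth stating carefully is that the dangling-edge condition is exactly what ensures the remaining product does not involve $z_i$, so that the integration over $z_i$ can be performed independently and collapses the expression to zero.
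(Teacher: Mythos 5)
Your proof is correct and takes essentially the same approach as the paper: condition on the latent vector so the edge expectations factorize, isolate the factor corresponding to the dangling edge $(i,j)$, and integrate out $z_i$ to see that $\E[\bar A_{ij}\mid z_{-i}] = \tau(p-r)/\sqrt{r(1-r)} + (1-\tau)(q-r)/\sqrt{r(1-r)} = 0$ by the definition $r = \tau p + (1-\tau)q$. The only cosmetic difference is that you first condition on all of $z$ and then integrate out $z_i$, while the paper conditions directly on $z_{-i}$; the content is identical.
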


\begin{proof}
Let $z_{-i} = (z_1, \dots, z_{i-1}, z_{i+1}, \dots, z_n)$.
Since $i$ is connected only to $j$ in $\alpha$, conditional on $z_{-i}$, the edges $\left\{A_{i'j'} : (i',j') \in \alpha \setminus \{(i,j)\} \right\}$ are mutually independent; further, they are independent from $z_i$ and $A_{ij}$. 
It follows that 
\begin{align*}
\E_{\cP} [ \phi_\alpha(A) ]
= \E_{z_{-i}} \bigg[  \prod_{(i',j') \in \alpha} \E_{\cP} [ \bar A_{i'j'} \mid z_{-i} ] \bigg] = 0 ,
\end{align*}
because 
$$
\E_{\cP}[\bar A_{ij} \mid z_{-i}] 
= \frac{\E_{\cP}[A_{ij} - r \mid z_j]}{\sqrt{r(1-r)}} 
= \frac{\tau (p-r) + (1-\tau) (q-r)}{\sqrt{r(1-r)}} 
= 0
$$
by the definition $r := \tau p + (1-\tau) q$.
\end{proof}

\begin{proposition}
\label{prop:adv-bound}
Recall \eqref{eq:adv-sq-sum}.
We have $\Adv_{\le D}^2 \le 2$ in either of the following situations:
\begin{itemize}
\item
$2 n^3 \tau^4 (2D)^{13} R^{6 \sqrt{D}} \le 1/2$, where $R := \max \big\{ 2D \frac{\mu}{1-\tau}, 1 \big\}$;

\item
$L \big(2D \frac{\mu}{1-\tau}\big)^2 \le 1/2$
and
$n^3 \tau^4 (2D)^{19} \big(\frac{\mu}{1-\tau}\big)^6 \le 1/2$, 
where $L := \max\{ n \tau^2 (2D)^4, 1 \}$.
\end{itemize}
\end{proposition}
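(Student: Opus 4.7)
The starting point is the Parseval expansion~\eqref{eq:adv-sq-sum}, into which I would substitute the bounds from Lemmas~\ref{lem:phi-zero} and~\ref{lem:phi-expectation-bd}. Because every $\alpha$ containing a dangling edge contributes $\E_\cP[\phi_\alpha(A)] = 0$, the sum restricts to graphs in which each vertex has degree at least two, i.e.\ graphs whose every connected component contains a cycle. Writing $v = |V(\alpha)|$, $k = |\alpha|$, and $m$ for the number of components, such a restriction forces $k \ge v$ with equality iff every component is a single cycle, and $v \ge 3m$ since the smallest min-degree-two graph is a triangle. Lemma~\ref{lem:phi-expectation-bd} then bounds each surviving term by $(\mu/(1-\tau))^{2k} v^{4v} \tau^{2(v-m)}$, a quantity depending on $\alpha$ only through the triple $(v,k,m)$.

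Next, I would group the sum by $(v,k,m)$, using $\binom{n}{v}\binom{\binom{v}{2}}{k}$ to count the admissible subgraphs of $\binom{[n]}{2}$ with these parameters. The basic unit is the triangle ($v = k = 3$, $m = 1$), which produces a term of order $n^3 \tau^4 (\mu/(1-\tau))^6$ times a universal constant, matching the threshold appearing in the detection lower bound. Enlarging a union-of-triangles graph can happen in essentially two ways: adding an excess edge to an existing component (incrementing $k$ while holding $v$ and $m$ fixed) or adding a new triangle component (incrementing $v$, $k$, $m$ by $3$, $3$, and $1$ respectively). Each excess edge multiplies the per-type bound by at most a polynomial in $v$ times $(\mu/(1-\tau))^2$, while each new triangle multiplies it by at most $n^3 (\mu/(1-\tau))^6 \tau^4$ times a polynomial in $v$; more exotic structures (longer cycles, blocks of higher excess) only introduce further polynomial cost in $v$ or stronger powers of $\tau$ and are absorbed into the same estimates.

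Under the second hypothesis, the condition $L(2D\mu/(1-\tau))^2 \le 1/2$ is tailored so that the geometric series in the number of excess edges converges with ratio at most $1/2$, and $n^3 \tau^4 (2D)^{19} (\mu/(1-\tau))^6 \le 1/2$ does the same for the geometric series in the number of triangle components; the product of the two series is then at most $2$, which is the claim. Under the first hypothesis, where $\mu$ may be larger, the uniform factor $R^{6\sqrt{D}}$ replaces geometric decay: pairing the $v^{4v}$ combinatorial cost with the maximum possible edge count $k \le D$ and splitting the sum at $v = \sqrt{D}$ yields the exponent $6\sqrt{D}$ of $R$, after which the same triangle-centred computation closes the estimate.

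The principal obstacle is the bookkeeping in the last two steps: one has to verify that the $\tau^{2(v-m)}$ factor from Lemma~\ref{lem:phi-expectation-bd} is always paired with enough vertex choices so that the $n^v$ factor from $\binom{n}{v}$ cannot run away, and that the $v^{4v}$ factor is eventually beaten by $1/k!$ arising from the binomial bound on $\binom{\binom{v}{2}}{k}$ once $k$ is moderately large. This balance is delicate precisely because adding a new triangle component simultaneously changes $v$, $k$, and $m$, so the three exponents in $(\mu/(1-\tau))^{2k} v^{4v} \tau^{2(v-m)}$ move together; once the right telescoping is set up, the double series is summable under either hypothesis and the bound $\Adv_{\le D}^2 \le 2$ follows.
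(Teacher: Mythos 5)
Your plan is the same as the paper's: expand $\Adv_{\le D}^2$ via Parseval, kill all $\alpha$ with a dangling edge using Lemma~\ref{lem:phi-zero} (forcing $3m \le v \le |\alpha|$), apply Lemma~\ref{lem:phi-expectation-bd}, count subgraphs by the triple $(v,|\alpha|,m)$ with $\binom{n}{v}\bigl[\binom{\binom{v}{2}}{|\alpha|}\wedge 2^{\binom{v}{2}}\bigr]$, and then in Case~2 sum the resulting double geometric series in $|\alpha|$ and $m$, while in Case~1 split at $v=\sqrt{D}$ so that $2^{v^2}\le 2^{\sqrt{D}\,v}$ can be absorbed into $R^{\sqrt{D}\,v}$. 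This is precisely the paper's argument, with the triangle ($v=|\alpha|=3$, $m=1$) appearing as the dominant term in both treatments; the only thing you have not done is the actual arithmetic that turns the two hypotheses into $\Adv_{\le D}^2\le 2$, but your outline of that bookkeeping (geometric decay under the second hypothesis, a uniform $R^{O(\sqrt{D})}$ bound plus the $\sqrt{D}$-split under the first) is exactly what the paper carries out, so there is no missing idea.
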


\begin{proof}
To ease the notation, we consider $D$ such that $\sqrt{D}/3$ is an integer; the proof can be easily adapted to the general case by using floors $\lfloor \cdot \rfloor$ or ceilings $\lceil \cdot \rceil$. 

We start with \eqref{eq:adv-sq-sum} which states
$$
\Adv_{\le D}^2 = \sum_{\alpha \subseteq \binom{[n]}{2} \,:\, |\alpha| \le D} \big( \E_{\cP} [ \phi_\alpha(A) ] \big)^2 .
$$
Recall that $\phi_\varnothing \equiv 1$. 
Let $\mathsf{c}(\alpha)$ denote the number of connected components of $\alpha$. 
If any connected component of $\alpha$ has less than three edges, then it must contains a dangling edge;
if the number of vertices of $\alpha$ exceeds the number of edges, then $\alpha$ also has a dangling edge. 
Therefore, Lemma~\ref{lem:phi-zero} shows that $\E_{\cP} [ \phi_\alpha(A) ] = 0$ if $|\alpha| < 3 \, \mathsf{c}(\alpha)$ or $|V(\alpha)| > |\alpha|$.
Then, by Lemma~\ref{lem:phi-expectation-bd}, we obtain
\begin{align*}
\Adv_{\le D}^2 
\le 1 + \sum_{m=1}^{ D/3 } \sum_{\ell = 3m}^D \sum_{v = 3m}^{D} \sum_{\substack{\alpha \subseteq \binom{[n]}{2} \,:\, \, \mathsf{c}(\alpha) = m , \\ |\alpha| = \ell, \, |V(\alpha)| = v}} 
\Big(\frac{\mu}{1-\tau}\Big)^{2 \ell} v^{4 v} \tau^{2v-2m} \cdot \bbone\{v \le \ell \le v^2/2\} .
\end{align*}
There are at most $\binom{n}{v} \, \Big[ \binom{\binom{v}{2}}{\ell} \land 2^{\binom{v}{2}} \Big]$ graphs $\alpha$ with $|V(\alpha)| = v$ and $|\alpha| = \ell$. 
By the inequalities $\binom{n}{v} \le n^v$, $\binom{\binom{v}{2}}{\ell} \le v^{2 \ell} \le (2D)^{2 \ell}$, and $2^{\binom{v}{2}} \le 2^{v^2}$, we have
\begin{align}
&\Adv_{\le D}^2 
\le 1 + \sum_{m=1}^{ D/3 } \sum_{\ell = 3m}^D \sum_{v = 3m}^{D} \notag \\ 
&\qquad \qquad \Big[(2D)^{2\ell} \land 2^{v^2}\Big] \cdot \Big(\frac{\mu}{1-\tau}\Big)^{2 \ell} \big( n \tau^2 (2D)^4 \big)^v \tau^{-2m} \cdot \bbone\{v \le \ell \le v^2/2\} .
\label{eq:adv-bound-inter}
\end{align}
Let us consider two cases.
\paragraph{Case 1:} 
We bound \eqref{eq:adv-bound-inter} by splitting it into the following terms according the value of $v$:
\begin{subequations}
\begin{align}
\Adv_{\le D}^2 
\le 1 &+ \sum_{m=1}^{ D/3 } \sum_{v = 3m}^{ \sqrt{D} } \sum_{\ell = 3m}^D 2^{v^2} \Big(\frac{\mu}{1-\tau}\Big)^{2 \ell} \big( n \tau^2 (2D)^4 \big)^v \tau^{-2m} \cdot \bbone\{\ell \le v^2/2\} \label{eq:adv-bd-1} \\
&+ \sum_{m=1}^{ D/3 } \sum_{v =  \sqrt{D}  \lor 3m}^{D} \sum_{\ell = 3m}^D (2D)^{2\ell} \Big(\frac{\mu}{1-\tau}\Big)^{2 \ell} \big( n \tau^2 (2D)^4 \big)^v \tau^{-2m} . \label{eq:adv-bd-2}
\end{align}
\end{subequations}
We then bound \eqref{eq:adv-bd-1} and \eqref{eq:adv-bd-2} respectively. 
Recall that $R := \max \big\{ 2D \frac{\mu}{1-\tau}, 1 \big\}$. 
By the assumption $2 n^3 \tau^4 (2D)^{13} R^{6 \sqrt{D}} \le 1/2$, we have
$$
n \tau^2 (2D)^4 R^{\sqrt{D}} \le 1/2 , \qquad
n^3 \tau^4 (2D)^{12} R^{3 \sqrt{D}} \le 1/2 .
$$
For $v \le \sqrt{D}$, we have $2 \ell \le v^2 \le \sqrt{D} \, v$, so the sum in \eqref{eq:adv-bd-1} is bounded by
\begin{align*}
\sum_{m=1}^{ D/3 } \sum_{v = 3m}^{ \sqrt{D} } D \, R^{\sqrt{D} \, v} \big( n \tau^2 (2D)^4 \big)^v \tau^{-2m}
&\le 2 D \sum_{m=1}^{ D/3 } \big( n \tau^2 (2D)^4 R^{\sqrt{D}} \big)^{3m} \tau^{-2m} \\
&\le 4 D \, n^3 \tau^4 (2D)^{12} R^{3 \sqrt{D}} \le 1/2
\end{align*}
by the assumption $2 n^3 \tau^4 (2D)^{13} R^{6 \sqrt{D}} \le 1/2$.
Next, using 
$$
n \tau^2 (2D)^4 \le 1/2, \qquad
n^3 \tau^4 (2D)^{12} \le 1/2 ,
$$
we see that the sum in \eqref{eq:adv-bd-2} is bounded by
\begin{align*}
&\sum_{m=1}^{ D/3 } D R^{2 D} \big( n \tau^2 (2D)^4 \big)^{ \sqrt{D}  \lor 3m} \tau^{-2m} \\
&\le \sum_{m=1}^{ \sqrt{D}/3 } D R^{2 D} \big( n \tau^2 (2D)^4 \big)^{ \sqrt{D} } \tau^{-2  \sqrt{D}/3 } + \sum_{m= \sqrt{D}/3 }^{ D/3 } D R^{2 D} \big( n^3 \tau^4 (2D)^{12} \big)^{m} \\
&\le D^{3/2} R^{2 D} \big( n^3 \tau^4 (2D)^{12} \big)^{ \sqrt{D}/3 } + D R^{2 D} \big( n^3 \tau^4 (2D)^{12} \big)^{ \sqrt{D}/3 } \\
&\le \Big( n^3 \tau^4 (2D)^{12} (2D)^{9/(2\sqrt{D})} R^{6 \sqrt{D}} \Big)^{\sqrt{D}/3} \le 1/2 
\end{align*}
by the assumption $2 n^3 \tau^4 (2D)^{13} R^{6 \sqrt{D}} \le 1/2$.
Combining the two terms yields $\Adv_{\le D}^2 \le 2$.

\paragraph{Case 2:} 
It follows from \eqref{eq:adv-bound-inter} that 
\begin{align*}
\Adv_{\le D}^2 
&\le 1 + \sum_{m=1}^{ D/3 } \sum_{\ell = 3m}^D \sum_{v = 3m}^{\ell} n^v (2D)^{2 \ell} \Big(\frac{\mu}{1-\tau}\Big)^{2 \ell} (2D)^{4 v} \tau^{2v-2m} \\
&= 1 + \sum_{m=1}^{ D/3 } \tau^{-2m} \sum_{\ell = 3m}^D \bigg( \Big(2D \frac{\mu}{1-\tau}\Big)^2 \bigg)^\ell \sum_{v = 3m}^{\ell} \big( n \tau^2 (2D)^4 \big)^v .
\end{align*}
Recall that $L := \max\{ n \tau^2 (2D)^4, 1 \}$, $L \big(2D \frac{\mu}{1-\tau}\big)^2 \le 1/2$, and $n^3 \tau^4 (2D)^{19} \big(\frac{\mu}{1-\tau}\big)^6 \le 1/2$. 
We have
\begin{align*}
\Adv_{\le D}^2 
&\le 1 + \sum_{m=1}^{ D/3 } \tau^{-2m} \sum_{\ell = 3m}^D \bigg( \Big(2D \frac{\mu}{1-\tau}\Big)^2 \bigg)^\ell (\ell - 3m + 1) \big( n \tau^2 (2D)^4 \big)^{3m} L^{\ell - 3m} \\
&\le 1 + D \sum_{m=1}^{ D/3 } \big( L^{-3}  n^3 \tau^4 (2D)^{12} \big)^m \sum_{\ell = 3m}^D \bigg( L \Big(2D \frac{\mu}{1-\tau}\Big)^2 \bigg)^\ell \\
&\le 1 + 2 D \sum_{m=1}^{ D/3 } \big( L^{-3} n^3 \tau^4 (2D)^{12} \big)^m \bigg( L \Big(2D \frac{\mu}{1-\tau}\Big)^2 \bigg)^{3m} \\
&= 1 + 2 D \sum_{m=1}^{ D/3 } \bigg( n^3 \tau^4 (2D)^{18} \Big(\frac{\mu}{1-\tau}\Big)^6 \bigg)^m \\
&\le 1 + 4 D n^3 \tau^4 (2D)^{18} \Big(\frac{\mu}{1-\tau}\Big)^6 
\le 2 ,
\end{align*}
finishing the proof.
\end{proof}

We are ready to prove Theorem~\ref{thm:detect-lower}. 

\begin{proof}[Proof of Theorem~\ref{thm:detect-lower}]
To prove that no polynomial of degree at most $D$ strongly separates $\cP$ and $\cQ$, recall the discussion after Theorem~\ref{thm:detect-lower}: 
it suffices to show that the advantage in \eqref{eq:adv-sq-sum} is $O(1)$.
Note that it is bounded by $2$ in Proposition~\ref{prop:adv-bound}, which we now apply.
It suffices to verify that \eqref{eq:detect-lower-cond} implies the assumptions of Proposition~\ref{prop:adv-bound}.
To this end, we consider two cases:
\begin{itemize}
\item
If $\mu = \tilde \Theta(1)$, $n^3 \tau^4 \le n^{-\delta}$, and $D = o \Big( \big(\frac{\log n}{\log \log n}\big)^2 \Big)$, then we can check the first set of conditions in Proposition~\ref{prop:adv-bound}:
$R = \max \big\{ 2D \frac{\mu}{1-\tau}, 1 \big\} = \tilde O(1)$ and 
$2 n^3 \tau^4 (2D)^{13} R^{6 \sqrt{D}} \le n^3 \tau^4 \cdot n^{o(1)} \le 1/2.$

\item
Next, suppose that $\mu \le (\log n)^{-100}$, $n^3 \tau^4 \mu^6 \le n^{-\delta}$, and $D \le (\log n)^{10}$.
We can check the second set of conditions in Proposition~\ref{prop:adv-bound} by further considering two subcases:
\begin{itemize}
\item 
If $\tau \le n^{-1/2}$, then $L = \max\{ n \tau^2 (2D)^4, 1 \} \le (2D)^4$, 
$L \Big(2D \frac{\mu}{1-\tau}\Big)^2 \le \mu^2 (4D)^6 \le 1/2$,
and
$n^3 \tau^4 (2D)^{19} \Big(\frac{\mu}{1-\tau}\Big)^6 \le n^{-\delta} (4D)^{19} \le 1/2$.

\item
If $\tau > n^{-1/2}$, then 
$L = n \tau^2 (2D)^4$ and 
$L \Big(2D \frac{\mu}{1-\tau}\Big)^2 \le n \tau^2 \mu^2 (4D)^6 \le 1/2$ because $n \tau^2 \mu^2 = (n^3 \tau^6 \mu^6)^{1/3} \le (n^3 \tau^4 \mu^6)^{1/3} \le n^{-\delta/3}$.
Finally, $n^3 \tau^4 (2D)^{19} \Big(\frac{\mu}{1-\tau}\Big)^6 \le 1/2$ as in the previous subcase.
\end{itemize}
\end{itemize}
Combining all the cases completes the proof.
\end{proof}

\subsection{Detection upper bound}
\label{sec:detect-upper}

While it suffices to focus on the signed triangle count for the upper bound, we consider cliques with $v \ge 3$ vertices, because some intermediate results hold for a general $v \ge 3$ and may be interesting in their own right.
Define
\begin{equation*}
S_v(A) := \sum_{H \in \binom{[n]}{v}} \prod_{(i,j) \in \binom{H}{2}} \bar{A}_{ij} .
\end{equation*}
Recall the models $\cP$ and $\cQ$ in Definitions~\ref{def:mod-p} and~\ref{def:mod-q} respectively.

\begin{proposition}
\label{prop:q-exp-var}
We have 
$$
\E_{\cQ}[S_v(A)] = 0, \qquad
\Var_{\cQ}(S_v(A)) = \binom{n}{v} . 
$$
\end{proposition}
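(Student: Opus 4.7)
The plan is to exploit the fact that under $\cQ$, the variables $\{\bar A_{ij}\}_{(i,j) \in \binom{[n]}{2}}$ are i.i.d.\ with mean $0$ and variance $1$, since $A_{ij} \sim \Bern(r)$ independently and $\bar A_{ij}$ is just the standardization of $A_{ij}$.

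For the expectation, I would observe that for any fixed $H \in \binom{[n]}{v}$ with $v \ge 2$, the set $\binom{H}{2}$ is nonempty, so by independence
\[
\E_\cQ \bigg[ \prod_{(i,j) \in \binom{H}{2}} \bar A_{ij} \bigg] = \prod_{(i,j) \in \binom{H}{2}} \E_\cQ[\bar A_{ij}] = 0.
\]
Summing over $H$ yields $\E_\cQ[S_v(A)] = 0$.

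For the variance, since the mean vanishes, $\Var_\cQ(S_v(A)) = \E_\cQ[S_v(A)^2]$. Expanding the square gives
\[
\E_\cQ[S_v(A)^2] = \sum_{H, H' \in \binom{[n]}{v}} \E_\cQ \bigg[ \prod_{(i,j) \in \binom{H}{2} \triangle \binom{H'}{2}} \bar A_{ij} \prod_{(i,j) \in \binom{H}{2} \cap \binom{H'}{2}} \bar A_{ij}^2 \bigg],
\]
where $\triangle$ denotes symmetric difference. The key step is the combinatorial claim that $\binom{H}{2} \triangle \binom{H'}{2}$ is empty if and only if $H = H'$: indeed, if there exists a vertex $w \in H \setminus H'$, then any edge $(w,w')$ with $w' \in H \setminus \{w\}$ lies in $\binom{H}{2} \setminus \binom{H'}{2}$ (since $w \notin H'$), so the symmetric difference is nonempty. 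By independence, whenever $H \ne H'$ the expectation factors and contains at least one factor of $\E_\cQ[\bar A_{ij}] = 0$, killing the term. For $H = H'$, every edge appears squared, and $\E_\cQ[\bar A_{ij}^2] = 1$, so the diagonal term equals $1$. Summing over the $\binom{n}{v}$ diagonal terms gives $\Var_\cQ(S_v(A)) = \binom{n}{v}$.

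There is no real obstacle here; the proof is essentially bookkeeping with independence and the orthonormality of the basis $\{\phi_\alpha\}$ under $\cQ$. The only subtlety worth stating cleanly is the symmetric-difference argument showing that distinct $v$-subsets $H$ and $H'$ always induce distinct edge sets $\binom{H}{2}$ and $\binom{H'}{2}$, which is why the off-diagonal terms vanish.
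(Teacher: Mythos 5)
Your proof is correct and takes essentially the same route as the paper: expand $\E_\cQ[S_v(A)^2]$ as a double sum over $H, H'$, kill the off-diagonal terms using that some $\bar A_{ij}$ appears with multiplicity one and has mean zero, and count the diagonal. The paper states this in one line; you additionally spell out the combinatorial fact that $H\neq H'$ forces $\binom{H}{2}\triangle\binom{H'}{2}\neq\varnothing$, which is the reason the off-diagonal terms vanish and is worth making explicit.
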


\begin{proof}
It is clear that $\E_{\cQ}[S_v(A)] = 0$.
Moreover, the variance of $S_v(A)$ under $\cQ$ is equal to  
$$
\E_{\cQ}[S_v(A)^2] = \sum_{H, H' \in \binom{[n]}{v}} \E_{\cQ} \bigg[ \prod_{(i,j) \in \binom{H}{2}} \prod_{(i',j') \in \binom{H'}{2}} \bar A_{ij} \bar A_{i'j'} \bigg] 
= \sum_{H \in \binom{[n]}{v}} \E_{\cQ} \bigg[ \prod_{(i,j) \in \binom{H}{2}} \bar A_{ij}^2 \bigg] 
= \binom{n}{v} .
$$
\end{proof}

For $H \in \binom{[n]}{v}$, define
\begin{equation}
\zeta(z;H) := \left| \left\{ (i,j) \in \binom{H}{2} : \dist(z_i,z_j) \le \tau/2 \right\} \right| .
\label{eq:def-zeta}
\end{equation}

\begin{lemma}
\label{lem:z-exp-lwbd}
Suppose that $3 \le v \le n$ and $0 < \tau \le \frac{1}{2^{v+1} (1+2v^{v-1})}$. 
If $|H| = v$, then 
\begin{equation*}
(-1)^{\binom{v}{2}} \cdot \E_z \Big[ \Big(\frac{\tau-1}{\tau}\Big)^{\zeta(z;H)} \Big] 
\ge \frac{1}{2^{v+1} \tau^{\binom{v}{2}-v+1}} .
\end{equation*}
\end{lemma}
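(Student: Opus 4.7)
The plan is to expand the expectation algebraically, isolate the leading-order contribution from the ``all points tightly clustered'' event, and control the remaining contributions via a spanning-forest bound on $\Pi$.

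First, I set $Y_{ij} := \bbone\{\dist(z_i, z_j) \le \tau/2\} - \tau$, which has mean zero. The algebraic identity $1 - \bbone\{\dist(z_i, z_j) \le \tau/2\}/\tau = -Y_{ij}/\tau$ yields
$\big(\frac{\tau-1}{\tau}\big)^{\zeta(z;H)} = \prod_{(i,j) \in \binom{H}{2}}\big(1 - \bbone\{\dist(z_i,z_j) \le \tau/2\}/\tau\big) = \big(-\frac{1}{\tau}\big)^{\binom{v}{2}} \prod_{(i,j) \in \binom{H}{2}} Y_{ij},$
so the desired inequality is equivalent to the mean-centered estimate $\E_z\big[\prod_{(i,j) \in \binom{H}{2}} Y_{ij}\big] \ge \tau^{v-1}/2^{v+1}$.

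Next, writing $\bbone_{ij} := \bbone\{\dist(z_i,z_j) \le \tau/2\}$ and expanding $\prod Y_{ij} = \prod(\bbone_{ij} - \tau)$ gives
$\E_z\big[\prod_{(i,j)} Y_{ij}\big] = \sum_{S \subseteq \binom{H}{2}}(-\tau)^{\binom{v}{2}-|S|}\Pi(S), \quad \Pi(S) := \p_z\{\bbone_{ij} = 1 \text{ for all } (i,j) \in S\}.$
The term $S = \binom{H}{2}$ is the dominant one: $\Pi(\binom{H}{2})$ equals the probability that all $v$ latent points lie in some arc of length $\tau/2$, which is $v(\tau/2)^{v-1}$ by the classical arc-covering formula for i.i.d.\ uniform points on a unit circle. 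This contributes $v\tau^{v-1}/2^{v-1} = 4v \cdot \tau^{v-1}/2^{v+1}$, exceeding the target by a factor of $4v \ge 12$.

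For each $S \ne \binom{H}{2}$, I will apply the spanning-forest bound $\Pi(S) \le \tau^{v(S)-m(S)}$: since $S$ contains a spanning forest of $V(S)$ (a spanning tree within each connected component), and since the probability that all edges of such a forest are close factors as $\tau$ per edge (by rooting each tree and integrating out independent offsets), one has $\Pi(S) \le \tau^{v(S)-m(S)}$. Hence
$\big|(-\tau)^{\binom{v}{2}-|S|}\Pi(S)\big| \le \tau^{\binom{v}{2}-c(S)}, \quad c(S) := |S| - v(S) + m(S),$
where $c(S)$ is the cyclomatic number of $S$. A short check shows $c(S)$ attains its maximum value $\binom{v}{2}-v+1$ uniquely at $S = \binom{H}{2}$, so every non-maximal term is at least a factor $\tau$ smaller than the main one.

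It remains to bound the combinatorial sum $\sum_{S \ne \binom{H}{2}} \tau^{\binom{v}{2}-c(S)} = \tau^{v-1}\sum_{S \ne \binom{H}{2}}\tau^{d(S)}$, where $d(S) := (\binom{v}{2}-v+1)-c(S) \ge 1$. The $d=1$ shell consists of exactly the $\binom{v}{2}$ subgraphs of the form $K_v$ minus a single edge, contributing $\binom{v}{2}\tau$. Higher shells are estimated by enumerating subgraphs according to the triple (edges deleted, vertices isolated, extra components); Cayley's formula $v^{v-2}$ for labeled spanning trees controls the number of spanning connected configurations on each component, producing the $v^{v-1}$-type bound that matches the hypothesis $\tau \le 1/(2^{v+1}(1+2v^{v-1}))$. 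Under this hypothesis, the resulting geometric series sums to at most $(4v-1)/2^{v+1}$, so subtracting from the main contribution $4v \cdot \tau^{v-1}/2^{v+1}$ leaves at least $\tau^{v-1}/2^{v+1}$ as required.

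The main obstacle is the final combinatorial step: the naive bound $2^{\binom{v}{2}}$ on the number of non-$K_v$ subgraphs is too crude for $v$ even moderately large, and a careful shell-by-shell enumeration accounting for missing vertices, disconnection, and edge deletions simultaneously is needed. The appearance of $v^{v-1}$ in the hypothesis signals that Cayley-type spanning-tree enumeration plays the central role.
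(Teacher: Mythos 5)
Your algebraic reformulation is clean and the intermediate claims are mostly sound: the identity $(-1)^{\binom{v}{2}}\E_z[((\tau-1)/\tau)^{\zeta}] = \tau^{-\binom{v}{2}}\E_z[\prod Y_{ij}]$ is correct, the exact formula $\Pi(\binom{H}{2}) = v(\tau/2)^{v-1}$ is correct (and in fact sharper than the paper's lower bound $(\tau/2)^{v-1}$), the spanning-forest bound $\Pi(S)\le\tau^{v(S)-m(S)}$ is correct, and $c(S)\le\binom{v}{2}-v$ for every $S\subsetneq K_v$ does hold. But this is a genuinely different decomposition from the paper's, and the place where the two approaches diverge in difficulty is exactly the step you leave unfinished. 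The paper expands $\E_z[((\tau-1)/\tau)^{\zeta}]=\sum_{m}\p\{\zeta=m\}((\tau-1)/\tau)^m$, which has only $\binom{v}{2}+1$ terms, and controls $\p\{\zeta=m\}$ in three regimes ($m\le\binom{v}{2}-v$; $\binom{v}{2}-v+2\le m\le\binom{v}{2}-1$ forces the proximity graph to be connected, probability $\le(v\tau)^{v-1}$; $m=\binom{v}{2}-v+1$ forces connected-or-one-isolated, probability $\le v(v\tau)^{v-2}$). The $v^{v-1}$ in the hypothesis arises directly as a diameter bound on a connected proximity graph. Your decomposition instead sums over all $2^{\binom{v}{2}}$ subsets $S$, which is why the hard part lands in the combinatorics.

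There are two concrete problems with the sketch of that combinatorial step. First, the claim that the $d=1$ shell ``consists of exactly the $\binom{v}{2}$ subgraphs of the form $K_v$ minus a single edge'' is false for $v=3$: there $\binom{v}{2}-v+1=1$ and $c(S)=0$ for every proper $S\subsetneq K_3$, so \emph{all seven} proper subsets have $d(S)=1$, not three. (For $v\ge 4$ your claim is correct.) Second, the route through Cayley's formula is a red herring: to bound $\sum_{S\ne K_v}\tau^{d(S)}$ one does not count spanning trees on each component, but rather sums over partitions of $[v]$ into isolated vertices plus blocks, with each block carrying a weighted sum $\sum_{E}\tau^{-|E|}$ over connected graphs $E$ on that block; the dominant term is the trivial partition (spanning connected $S$), contributing $(1+\tau)^{\binom{v}{2}}-1\approx\binom{v}{2}\tau$, and nontrivial partitions pick up positive powers of $\tau$ through the cut-edge count $E_{\mathrm{out}}=\binom{v}{2}-\sum_j\binom{|B_j|}{2}$. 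Carrying this out and verifying that the exponent $E_{\mathrm{out}}+1-i-m$ stays nonnegative for every partition is doable, but it is real work that your proposal does not do, and it does not produce the $v^{v-1}$ factor in the clean way the paper does. As written, the proof has a gap at its acknowledged ``main obstacle.''
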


\begin{proof}
Without loss of generality, we assume that $H = [v]$. 
It holds that 
\begin{align}
&(-1)^{\binom{v}{2}} \cdot \E_z \Big[ \Big(\frac{\tau-1}{\tau}\Big)^{\zeta(z;H)} \Big] \notag \\
&= (-1)^{\binom{v}{2}} \cdot \sum_{m=0}^{\binom{v}{2}} \p_z\{ \zeta(z;H) = m \} \cdot \Big(\frac{\tau-1}{\tau}\Big)^m \notag \\
&\ge \p_z \bigg\{ \zeta(z;H) = \binom{v}{2} \bigg\} \cdot \bigg(\frac{1-\tau}{\tau}\bigg)^{\binom{v}{2}} - \sum_{m=0}^{\binom{v}{2} - 1} \p_z\{ \zeta(z;H) = m \} \cdot \bigg(\frac{1-\tau}{\tau}\bigg)^m .
\label{eq:lwbd-1}
\end{align}
We now further bound this quantity from below.

First, conditional on any realization of $z_1$, it holds that
$$
\p_z \big\{ \dist(z_1,z_i) \le \tau/4 \text{ for all } i \in H \mid z_1 \big\} = (\tau/2)^{v-1} .
$$
If $\dist(z_1,z_i) \le \tau/4$ for all $i \in H$, then $\dist(z_i,z_j) \le \tau/2$ for all $i,j \in H$ so that $\zeta(z;H) = \binom{v}{2}$. 
Consequently,
\begin{equation}
\p_z \bigg\{ \zeta(z;H) = \binom{v}{2} \bigg\} 
\cdot \bigg(\frac{1-\tau}{\tau}\bigg)^{\binom{v}{2}}
\ge \Big(\frac{\tau}{2}\Big)^{v-1} \cdot \frac{1 - \tau \binom{v}{2}}{\tau^{\binom{v}{2}}} 
\ge \frac{1}{2^v \tau^{\binom{v}{2}-v+1}} ,
\label{eq:lwbd-2}
\end{equation}
where the last step holds because $\tau \le \frac{1}{v(v-1)}$ by assumption.

Second, we have
\begin{equation}
\sum_{m=0}^{\binom{v}{2} - v} \p_z\{ \zeta(z;H) = m \} \cdot \bigg(\frac{1-\tau}{\tau}\bigg)^m
\le \Bigg( \sum_{m=0}^{\binom{v}{2} - v} \p_z\{ \zeta(z;H) = m \} \Bigg) \cdot \frac{1}{\tau^{\binom{v}{2} - v}}
\le \frac{1}{\tau^{\binom{v}{2} - v}} .
\label{eq:lwbd-3}
\end{equation}

Third, for $\binom{v}{2} - v + 2 \le m \le \binom{v}{2} - 1$ and $\zeta(z;H) = m$, the graph on $H$ with the edge set 
\begin{equation}
\left\{ (i,j) \in \binom{H}{2} : \dist(z_i,z_j) \le \tau/2 \right\}
\label{eq:close-edge-set}
\end{equation}
must be connected. 
As a result, $\dist(z_1,z_i) \le v \tau/2$ for all $i \in H$.
Conditional on any realization of $z_1$, it holds that
$$
\p_z \big\{ \dist(z_1,z_i) \le v \tau/2 \text{ for all } i \in H \mid z_1 \big\} \le (v \tau)^{v-1} .
$$
Then we obtain
\begin{align}
\sum_{m=\binom{v}{2} - v + 2}^{\binom{v}{2} - 1} \p_z\{ \zeta(z;H) = m \} \cdot \bigg(\frac{1-\tau}{\tau}\bigg)^m
&\le \Bigg( \sum_{m=\binom{v}{2} - v + 2}^{\binom{v}{2} - 1} \p_z\{ \zeta(z;H) = m \} \Bigg) \cdot \frac{1}{\tau^{\binom{v}{2} - 1}} \notag \\
&\le (v \tau)^{v-1} \cdot  \frac{1}{\tau^{\binom{v}{2} - 1}} 
= \frac{v^{v-1}}{\tau^{\binom{v}{2} - v}}.
\label{eq:lwbd-4}
\end{align}

Fourth, for $\zeta(z;H) = \binom{v}{2} - v + 1$, the graph on $H$ with the edge set \eqref{eq:close-edge-set} is either connected or has only one isolated vertex $z_{i^*}$. 
Let $j^* = j^*(i^*)$ be any vertex in $H$ not equal to $i^*$. 
Then $\dist(z_{j^*}, z_i) \le v \tau /2$ for all $i \in H \setminus \{i^*\}$. 
Conditional on any realization of $z_{j^*}$, it holds that 
$$
\p_z \big\{ \dist(z_{j^*},z_i) \le v \tau/2 \text{ for all } i \in H \setminus \{i^*\} \mid z_{j^*} \big\} \le (v \tau)^{v-2} .
$$
Then we obtain
\begin{equation}
\p_z \left\{ \zeta(z;H) = \binom{v}{2} - v + 1 \right\} \cdot \bigg(\frac{1-\tau}{\tau}\bigg)^{\binom{v}{2} - v + 1}
\le v (v \tau)^{v-2} \cdot \frac{1}{\tau^{\binom{v}{2} - v + 1}} 
\le \frac{v^{v-1}}{\tau^{\binom{v}{2} - v}} ,
\label{eq:lwbd-5}
\end{equation}
where we used the assumption $v \ge 3$ in the last step.

Finally, combining \eqref{eq:lwbd-1}, \eqref{eq:lwbd-2}, \eqref{eq:lwbd-3}, \eqref{eq:lwbd-4}, and \eqref{eq:lwbd-5} yields that
\begin{equation*}
\bigg| \E_z \Big[ \Big(\frac{\tau-1}{\tau}\Big)^{\zeta(z;H)} \Big] \bigg| 
\ge \frac{1}{2^v \tau^{\binom{v}{2}-v+1}}  - \frac{1}{\tau^{\binom{v}{2} - v}} -  \frac{v^{v-1}}{\tau^{\binom{v}{2} - v}} - \frac{v^{v-1}}{\tau^{\binom{v}{2} - v}} 
\ge \frac{1}{2^{v+1} \tau^{\binom{v}{2}-v+1}} ,
\end{equation*}
since $\frac{1}{2^v \tau} \ge 2(1+2v^{v-1})$ by assumption.
\end{proof}

\begin{proposition}
\label{prop:p-exp}
Suppose that $3 \le v \le n$ and $0 < \tau \le \frac{1}{2^{v+1} (1+2v^{v-1})}$. 
It holds that
$$
\big| \E_{\cP}[S_v(A)] \big| 
\ge \binom{n}{v} \Big(\frac{\mu}{1-\tau}\Big)^{\binom{v}{2}} \frac{\tau^{v-1}}{2^{v+1}} .
$$
\end{proposition}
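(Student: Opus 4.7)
The plan is to exploit the symmetry of $S_v(A)$ and then reuse the exact formula for $\E_{\cP}[\phi_\alpha(A)]$ that already appeared inside the proof of Lemma~\ref{lem:phi-expectation-bd}, combined with the pointwise lower bound from Lemma~\ref{lem:z-exp-lwbd}. By linearity,
\[
\E_{\cP}[S_v(A)] = \sum_{H\in\binom{[n]}{v}} \E_{\cP}\big[\phi_{\binom{H}{2}}(A)\big],
\]
and by exchangeability of the latent variables $z_1,\dots,z_n$, all $\binom{n}{v}$ summands are equal; so I only need to estimate one of them and multiply by $\binom{n}{v}$.

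Next, I would specialize the identity derived in the proof of Lemma~\ref{lem:phi-expectation-bd}, taking $\alpha=\binom{H}{2}$ (so $|\alpha|=\binom{v}{2}$ and $\eta(z;\alpha)$ coincides with $\zeta(z;H)$ from \eqref{eq:def-zeta}). This yields the \emph{exact} equality
\[
\E_{\cP}\big[\phi_{\binom{H}{2}}(A)\big]
= \bigg(\frac{\mu\tau}{\tau-1}\bigg)^{\binom{v}{2}} \E_z\bigg[\Big(\frac{\tau-1}{\tau}\Big)^{\zeta(z;H)}\bigg].
\]
Pulling out the sign gives $(\mu\tau/(\tau-1))^{\binom{v}{2}} = (-1)^{\binom{v}{2}}(\mu\tau/(1-\tau))^{\binom{v}{2}}$, so
\[
\E_{\cP}\big[\phi_{\binom{H}{2}}(A)\big]
= \bigg(\frac{\mu\tau}{1-\tau}\bigg)^{\binom{v}{2}} \cdot (-1)^{\binom{v}{2}}\E_z\bigg[\Big(\frac{\tau-1}{\tau}\Big)^{\zeta(z;H)}\bigg].
\]
The key point is that the sign $(-1)^{\binom{v}{2}}$ that appears here is exactly the sign extracted in the statement of Lemma~\ref{lem:z-exp-lwbd}, so the two sign factors conspire to produce a nonnegative number and Lemma~\ref{lem:z-exp-lwbd} directly gives a lower bound on the magnitude.

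Applying Lemma~\ref{lem:z-exp-lwbd} (whose hypotheses match those of the proposition) yields
\[
\E_{\cP}\big[\phi_{\binom{H}{2}}(A)\big]
\ge \bigg(\frac{\mu\tau}{1-\tau}\bigg)^{\binom{v}{2}} \cdot \frac{1}{2^{v+1}\,\tau^{\binom{v}{2}-v+1}}
= \bigg(\frac{\mu}{1-\tau}\bigg)^{\binom{v}{2}} \frac{\tau^{\,v-1}}{2^{v+1}},
\]
and the $\tau^{\binom{v}{2}}$ in the numerator cancels most of the $\tau^{\binom{v}{2}-v+1}$ in the denominator to leave $\tau^{v-1}$. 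Summing over $\binom{n}{v}$ identical (and same-sign) terms then gives the claimed bound, and taking absolute values is harmless since each summand is already nonnegative. I do not anticipate a real obstacle here: all the combinatorial/analytic difficulty has been pushed into Lemma~\ref{lem:z-exp-lwbd}, and the present proposition is essentially a clean bookkeeping step that converts the per-clique estimate into a global one via symmetry.
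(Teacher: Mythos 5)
Your proposal is correct and follows essentially the same route as the paper: both reduce to a per-clique expectation, arrive at the identity $\E_{\cP}[\phi_{\binom{H}{2}}(A)] = (\mu\tau/(\tau-1))^{\binom{v}{2}}\,\E_z[((\tau-1)/\tau)^{\zeta(z;H)}]$ (the paper re-derives it directly while you reuse the intermediate step from the proof of Lemma~\ref{lem:phi-expectation-bd}), observe the sign $(-1)^{\binom{v}{2}}$ matches that in Lemma~\ref{lem:z-exp-lwbd}, and sum the $\binom{n}{v}$ equal, same-sign terms. The only difference is purely presentational.
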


\begin{proof}
We have 
\begin{align*}
\E_{\cP}[S_v(A)] 
&= \sum_{H \in \binom{[n]}{v}} \E_{\cP} \bigg[ \prod_{(i,j) \in \binom{H}{2}} \bar A_{ij} \bigg] \\
&= \sum_{H \in \binom{[n]}{v}} \E_z \bigg[ \prod_{(i,j) \in \binom{H}{2}} \E_{\cP} [ \bar A_{ij} \mid z] \bigg] \\
&= \frac{1}{(r(1-r))^{\binom{v}{2}/2}} \sum_{H \in \binom{[n]}{v}} \E_z \Big[ (p-r)^{\zeta(z;H)} (q-r)^{\binom{v}{2} - \zeta(z;H)} \Big] .
\end{align*}
Recall that $r = \tau p + (1-\tau) q$ so that $\frac{p-r}{q-r} = \frac{\tau-1}{\tau}$, and $\mu = \frac{p-r}{\sqrt{r(1-r)}}$.
It follows that
$$
\E_{\cP}[S_v(A)] 
= \bigg(\frac{\mu \tau}{\tau-1}\bigg)^{\binom{v}{2}} \sum_{H \in \binom{[n]}{v}} \E_z \Big[ \Big(\frac{\tau-1}{\tau}\Big)^{\zeta(z;H)} \Big] .
$$
Then, by Lemma~\ref{lem:z-exp-lwbd}, we obtain
$$
\big| \E_{\cP}[S_v(A)] \big| 
\ge \bigg(\frac{\mu \tau}{1-\tau}\bigg)^{\binom{v}{2}} \binom{n}{v} \frac{1}{2^{v+1} \tau^{\binom{v}{2}-v+1}} ,
$$
completing the proof.
\end{proof}

\begin{proposition}
\label{prop:p-var}
There is an absolute constant $C>0$ such that
$$
\Var_{\cP}(S_3(A)) \le \frac{C}{r^3 (1-r)^3} \Big( n^4 (\tau p + q + r^2) (r-q)^2 (p-q)^2 + n^3 \big( q^3 + r^6 + \tau p q^2 + \tau p r^4 + \tau^2 p^3 \big) \Big) .
$$
\end{proposition}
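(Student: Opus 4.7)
Write $S_3(A) = \sum_{H \in \binom{[n]}{3}} X_H$ where $X_H := \prod_{(i,j) \in \binom{H}{2}} \bar A_{ij}$, so that $\Var_{\cP}(S_3(A)) = \sum_{H, H'} \bigl(\E_{\cP}[X_H X_{H'}] - \E_{\cP}[X_H]\,\E_{\cP}[X_{H'}]\bigr)$, and stratify by $k := |V(H) \cap V(H')| \in \{0, 1, 2, 3\}$. The cases $k \in \{0, 1\}$ contribute zero: for $k = 0$, $X_H$ and $X_{H'}$ depend on disjoint coordinates of $z$ and disjoint entries of $A$, so they are independent; for $k = 1$ with shared vertex $i^*$, the two triangles share no edge, so $\E_{\cP}[X_H X_{H'} \mid z] = \E_{\cP}[X_H \mid z]\,\E_{\cP}[X_{H'} \mid z]$, and by translation invariance of $\Unif([0,1])$ on the circle the partially-integrated expectation $\E_{\cP}[X_H \mid z_{i^*}]$ does not depend on $z_{i^*}$, which lets the two expectations factor.

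For $k = 2$ (say $H = \{1,2,3\}$, $H' = \{1,2,4\}$, with shared edge $(1,2)$), set $g(z_i,z_j) := \E_{\cP}[\bar A_{ij}^2 \mid z_i, z_j]$ and $h(z_i, z_j) := \E_{\cP}[\bar A_{ij} \mid z_i, z_j]$. Conditioning on $z$ and integrating $z_3, z_4$ out yields
\[
\E_{\cP}[X_H X_{H'}] = \E_{z_1, z_2}\bigl[g(z_1, z_2)\, \psi(\dist(z_1, z_2))^2\bigr], \quad \psi(d) := \E_{z_3}[h(z_1, z_3)\,h(z_2, z_3)].
\]
A short calculation using $h \in \{\mu, \nu\}$ with $\nu := (q-r)/\sqrt{r(1-r)} = -\tfrac{\tau}{1-\tau}\mu$ (so that $(1-\tau)\nu + \tau\mu = 0$) and $\p_{z_3}\{z_3 \text{ is close to both } z_1, z_2\} = (\tau - d)_+$ produces the clean form $\psi(d) = \bigl(\tfrac{p-q}{\sqrt{r(1-r)}}\bigr)^2 [(\tau - d)_+ - \tau^2]$. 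Splitting the integral over $d \in [0,1/2]$ at the thresholds $\tau/2$ and $\tau$, and using $g \lesssim (p + r^2)/(r(1-r))$ when the pair is close and $g \lesssim (q + r^2)/(r(1-r))$ otherwise, gives $\E_{\cP}[X_H X_{H'}] \lesssim (\tau p + q + r^2)(r-q)^2(p-q)^2/(r(1-r))^3$; the analogous bound on $\E_{\cP}[X_H]^2$ is of the same or smaller order. Multiplying by the $O(n^4)$ pairs with $k = 2$ yields the first term of the claimed bound.

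For $k = 3$, the covariance equals $\Var_{\cP}(X_H) \le \E_{\cP}[X_H^2] = \E_z\bigl[\prod_{(i,j) \in \binom{H}{2}} g(z_i, z_j)\bigr]$. Stratifying by the number $\zeta \in \{0, 1, 2, 3\}$ of close pairs in $H$ and using the standard arc-length estimates $\p\{\zeta = 0\} = O(1)$, $\p\{\zeta = 1\} = O(\tau)$, and $\p\{\zeta \ge 2\} = O(\tau^2)$ together with the bounds on $g$ above, one obtains $\E_{\cP}[X_H^2] \lesssim (q^3 + r^6 + \tau p q^2 + \tau p r^4 + \tau^2 p^3)/(r(1-r))^3$ after absorbing dominated terms (e.g.\ $\tau^2 p^2 q \le \tau^2 p^3$ using $q \le p$, and $\tau q^2 r^2 \le \tau p q^2$ using $r^2 \le p$). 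Summing over $\binom{n}{3} = O(n^3)$ produces the second term, and combining with the $k = 2$ contribution gives the stated inequality. The main technical hurdle is the $k = 2$ case: the identity $(1-\tau)\nu + \tau\mu = 0$ is what produces the closed form for $\psi(d)$, which in turn surfaces the crucial factor $(r-q)^2(p-q)^2 = \tau^2(p-q)^4$ in the final bound; any less careful treatment would yield a variance too large to match the detection upper bound in Theorem~\ref{thm:detect-upper}.
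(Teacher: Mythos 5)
Your decomposition of $\Var_\cP(S_3)$ by overlap size $k=|V(H)\cap V(H')|$, the zero-covariance argument for $k\in\{0,1\}$ (with the translation-invariance observation for $k=1$), and the $k=3$ treatment via $\p\{\zeta(z;H)=1\}\lesssim\tau$, $\p\{\zeta(z;H)\ge 2\}\lesssim\tau^2$ all mirror the paper's proof. The genuine difference is the $k=2$ case. The paper expands $\E[X_HX_{H'}]$ over the joint events $\{\tilde\zeta(z)=\ell\}$, $\ell=0,\dots,4$, drops the negative odd-$\ell$ terms, and regroups using $\tau(p-r)\le r-q$; you instead integrate $z_3$ and $z_4$ out first and find the exact closed form $\psi(d)=(\mu-\nu)^2\big[(\tau-d)_+-\tau^2\big]$, which is correct: with $p_{11}=(\tau-d)_+$ one gets $\psi=p_{11}(\mu-\nu)^2+\nu\big[\nu+2\tau(\mu-\nu)\big]$, and the mean-zero identity $\tau\mu+(1-\tau)\nu=0$ collapses the second term to $-\tau^2(\mu-\nu)^2$. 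Integrating $g\cdot\psi^2$ over $d$ then recovers the paper's bound; in fact your computation gives a slightly tighter $\tau^3(q+r^2)$ contribution where the paper's regrouping yields $\tau^2(q+r^2)$, though you round up to the paper's form, which is all the proposition needs. The payoff of your route is that the exact $\psi$ formula makes transparent the cancellation producing the $(p-q)^4$ factor, whereas the paper obtains the same cancellation less explicitly by discarding cross terms and refactoring. Both are rigorous and lead to the same proposition.
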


\begin{proof}
For brevity, let $\sigma := \sqrt{r(1-r)}$ in this proof.
The variance of $S_3(A)$ under $\cP$ is 
\begin{align*}
&\Var_{\cP}(S_3(A)) = \E_{\cP}[S_3(A)^2] - \E_{\cP}[S_3(A)]^2 \\
&= \sum_{H, H' \in \binom{[n]}{3}} \E_{\cP} \bigg[ \prod_{(i,j) \in \binom{H}{2}} \prod_{(i',j') \in \binom{H'}{2}} \bar A_{ij} \bar A_{i'j'} \bigg] - \sum_{H, H' \in \binom{[n]}{3}} \E_{\cP} \bigg[ \prod_{(i,j) \in \binom{H}{2}} \bar A_{ij} \bigg] \, \E_{\cP} \bigg[ \prod_{(i',j') \in \binom{H'}{2}} \bar A_{i'j'} \bigg] .
\end{align*}
For fixed $H, H' \subseteq[n]$ with $|H| = |H'| = 3$, consider the following cases:
\begin{itemize}
\item
$|H \cap H'| = 0$: 
We have that $\{z_i : i \in H\}$ and $\{z_{i'} : i' \in H'\}$ are independent, and consequently, $\{\bar A_{ij} : (i,j) \in \binom{H}{3}\}$ and $\{\bar A_{i'j'} : (i',j') \in \binom{H'}{3}\}$ are independent. 
Therefore, 
$$
\E_{\cP} \bigg[ \prod_{(i,j) \in \binom{H}{2}} \prod_{(i',j') \in \binom{H'}{2}} \bar A_{ij} \bar A_{i'j'} \bigg] - \E_{\cP} \bigg[ \prod_{(i,j) \in \binom{H}{2}} \bar A_{ij} \bigg] \, \E_{\cP} \bigg[ \prod_{(i',j') \in \binom{H'}{2}} \bar A_{i'j'} \bigg] 
= 0 .
$$

\item
$|H \cap H'| = 1$: 
Suppose $H \cap H' = \{i^*\}$. 
Conditional on $z_{i^*}$, we have that $\{z_i : i \in H \setminus \{i^*\}\}$ and $\{z_{i'} : i' \in H' \setminus \{i^*\}\}$ are independent. 
Moreover, $\{\bar A_{ij} : (i,j) \in \binom{H}{3}\}$ and $\{\bar A_{i'j'} : (i',j') \in \binom{H'}{3}\}$ are conditionally independent, and their distributions are not changed by the conditioning on $z_{i^*}$. 
Therefore, we still have
$$
\E_{\cP} \bigg[ \prod_{(i,j) \in \binom{H}{2}} \prod_{(i',j') \in \binom{H'}{2}} \bar A_{ij} \bar A_{i'j'} \bigg] - \E_{\cP} \bigg[ \prod_{(i,j) \in \binom{H}{2}} \bar A_{ij} \bigg] \, \E_{\cP} \bigg[ \prod_{(i',j') \in \binom{H'}{2}} \bar A_{i'j'} \bigg] 
= 0 .
$$

\item
$|H \cap H'| = 2$: 
Without loss of generality, suppose that $H = \{1, 2, 3\}$ and $H' = \{1, 2, 4\}$. 
Then
\begin{align*}
&\E_{\cP} \bigg[ \prod_{(i,j) \in \binom{H}{2}} \prod_{(i',j') \in \binom{H'}{2}} \bar A_{ij} \bar A_{i'j'} \bigg] \\
&= \E_z \Big[ \E_{\cP} [\bar A_{12}^2 \mid z] \cdot \E_{\cP} [\bar A_{13} \mid z] \cdot \E_{\cP} [\bar A_{23} \mid z] \cdot \E_{\cP} [\bar A_{14} \mid z] \cdot \E_{\cP} [\bar A_{24} \mid z] \Big] \\
&= \frac{1}{\sigma^6} \, \E_z \Big[ \big( p (1-r)^2 + (1-p) r^2 \big)^{\bbone\{\dist(z_1,z_2) \le \tau/2\}} \big( q (1-r)^2 + (1-q) r^2 \big)^{\bbone\{\dist(z_1,z_2) > \tau/2\}} \\
& \qquad \qquad \quad \cdot \big( p - r \big)^{\tilde \zeta(z)} \big( q - r \big)^{4 - \tilde \zeta(z)} \Big] ,
\end{align*}
where $\tilde \zeta(z) := |\{(i,j) \in \{(1,3), (2,3), (1,4), (2,4) : \dist(z_i,z_j) \le \tau/2\}|$. 
We have the following:
\begin{itemize}
\item
It is obvious that 
$$
\p_z\big\{ \dist(z_1,z_2) \le \tau/2, \, \tilde \zeta(z) = 0 \big\} \le \tau, \qquad
\p_z\big\{ \dist(z_1,z_2) > \tau/2, \, \tilde \zeta(z) = 0 \big\} \le 1 .
$$

\item
Condition on any realization of $(z_1,z_2)$.
If $1 \le \tilde \zeta(z) \le 2$, then one of the following four events must occur: (1) $\dist(z_1,z_3) \le \tau/2$, (2) $\dist(z_2,z_3) \le \tau/2$, (3) $\dist(z_1,z_4) \le \tau/2$, or (4) $\dist(z_2,z_4) \le \tau/2$; this holds with conditional probability at most $4 \tau$. 
Therefore,
$$
\p_z\big\{ 1 \le \tilde \zeta(z) \le 2 \mid z_1, z_2 \big\} \le 4 \tau ,
$$
so we obtain 
$$
\p_z\big\{ \dist(z_1,z_2) \le \tau/2, \, 1 \le \tilde \zeta(z) \le 2 \big\} \le 4 \tau^2 , \qquad 
\p_z\big\{ \dist(z_1,z_2) > \tau/2, \, 1 \le \tilde \zeta(z) \le 2 \big\} \le 4 \tau.
$$

\item
Condition on any realization of $(z_1,z_2)$.
If $3 \le \tilde \zeta(z) \le 4$, then one of the following four events must occur: (1) $\dist(z_1,z_3) \le \tau/2$ and $\dist(z_1,z_4) \le \tau/2$, (2) $\dist(z_1,z_3) \le \tau/2$ and $\dist(z_2,z_4) \le \tau/2$, (3) $\dist(z_2,z_3) \le \tau/2$ and $\dist(z_1,z_4) \le \tau/2$, (4) $\dist(z_2,z_3) \le \tau/2$ and $\dist(z_2,z_4) \le \tau/2$; this holds with conditional probability at most $4 \tau^2$. 
Therefore, 
$$
\p_z\big\{ 3 \le \tilde \zeta(z) \le 4 \mid z_1, z_2 \big\} \le 4 \tau^2 ,
$$
so we obtain
$$
\p_z\big\{ \dist(z_1,z_2) \le \tau/2, \, 3 \le \tilde \zeta(z) \le 4 \big\} \le 4 \tau^3 , \qquad
\p_z\big\{ \dist(z_1,z_2) > \tau/2, \, 3 \le \tilde \zeta(z) \le 4 \big\} \le 4 \tau^2.
$$
\end{itemize}
Combining the above bounds, we see that
\begin{align*}
&\E_{\cP} \bigg[ \prod_{(i,j) \in \binom{H}{2}} \prod_{(i',j') \in \binom{H'}{2}} \bar A_{ij} \bar A_{i'j'} \bigg] \\
&= \frac{1}{\sigma^6}  \bigg[ \sum_{\ell=0}^4 \p_z \big\{ \dist(z_1,z_2) \le \tau/2, \, \tilde \zeta(z) = \ell \big\} \cdot \big( p (1-r)^2 + (1-p) r^2 \big) \big( p - r \big)^\ell \big( q - r \big)^{4 - \ell} \\
& \qquad \quad + \sum_{\ell=0}^4 \p_z \big\{ \dist(z_1,z_2) > \tau/2, \, \tilde \zeta(z) = \ell \big\} \cdot \big( q (1-r)^2 + (1-q) r^2 \big) \big( p - r \big)^\ell \big( q - r \big)^{4 - \ell} \bigg] \\
&\le \frac{1}{\sigma^6}  \bigg[ \big( p (1-r)^2 + (1-p) r^2 \big) \Big( \tau \big( r - q \big)^4
+ 4 \tau^2 \big( p - r \big)^2 \big( r - q \big)^2
+ 4 \tau^3 \big( p - r \big)^4 \Big) \\
& \qquad \quad + \big( q (1-r)^2 + (1-q) r^2 \big) \Big( \big( r - q \big)^4
+ 4 \tau \big( p - r \big)^2 \big( r - q \big)^2
+ 4 \tau^2 \big( p - r \big)^4 \Big) \bigg] ,
\end{align*}
where we omitted negative terms where $\ell$ is odd.
Recall that $\tau(p-r) = (1-\tau)(r-q) \le r-q$. 
Also, the condition $0 < q < r < p < 1$ implies that
\begin{equation}
p (1-r)^2 + (1-p) r^2 \le p + r^2 \le 2p , \qquad
q (1-r)^2 + (1-q) r^2 \le q + r^2 .
\label{eq:pqr-cond}
\end{equation}
It then follows that
\begin{align*}
&\E_{\cP} \bigg[ \prod_{(i,j) \in \binom{H}{2}} \prod_{(i',j') \in \binom{H'}{2}} \bar A_{ij} \bar A_{i'j'} \bigg] \\
&\le \frac{1}{\sigma^6}  \Big( \tau \big( p (1-r)^2 + (1-p) r^2 \big) + \big( q (1-r)^2 + (1-q) r^2 \big) \Big) 
\Big( \big( r - q \big)^2
+ 2 \tau \big( p - r \big)^2 \Big)^2 \\ 
&\le \frac{1}{\sigma^6}  ( 2 \tau p + q + r^2 ) 
\Big( ( r - q )^2
+ 2 (r-q) ( p - r ) \Big)^2 \\
&\le \frac{4}{\sigma^6}  ( 2 \tau p + q + r^2 ) (r-q)^2 ( p-q )^2 .
\end{align*}

\item
$|H \cap H'| = 3$: 
Without loss of generality, suppose that $H = H' = \{1,2,3\}$. Then
\begin{align*}
&\E_{\cP} \bigg[ \prod_{(i,j) \in \binom{H}{2}} \prod_{(i',j') \in \binom{H'}{2}} \bar A_{ij} \bar A_{i'j'} \bigg] 
= \E_{\cP} \bigg[ \prod_{(i,j) \in \binom{H}{2}} \bar A_{ij}^2 \bigg] 
= \E_z \Bigg[ \prod_{(i,j) \in \binom{H}{2}} \E_{\cP} [\bar A_{ij}^2 \mid z] \Bigg] \\
&= \frac{1}{\sigma^6} \, \E_z \Big[ \big( p (1-r)^2 + (1-p) r^2 \big)^{\zeta(z; H)} \big( q (1-r)^2 + (1-q) r^2 \big)^{3 - \zeta(z; H)} \Big] .
\end{align*}
We have $\p_z\{\dist(z_1,z_2) \le \tau/2\} = \tau$, so by symmetry, $\p_z\{\zeta(z;H) = 1\} \le 3 \tau$.
Moreover, let us condition on any realization of $z_1$. 
If $\zeta(z;H) \ge 2$, then $\dist(z_1,z_2) \le \tau$ and $\dist(z_1,z_3) \le \tau$, which occurs with conditional probability at most $(2\tau)^2$. 
Therefore, $\p_z\{\zeta(z;H) \ge 2\} \le (2\tau)^2$. 
We then obtain
\begin{align*}
&\E_{\cP} \bigg[ \prod_{(i,j) \in \binom{H}{2}} \prod_{(i',j') \in \binom{H'}{2}} \bar A_{ij} \bar A_{i'j'} \bigg] \\
&= \frac{1}{\sigma^6}  \sum_{\ell=0}^3 \p_z\{\zeta(z;H) = \ell\} \cdot \big( p (1-r)^2 + (1-p) r^2 \big)^{\ell} \big( q (1-r)^2 + (1-q) r^2 \big)^{3 - \ell} \\
&\le \frac{1}{\sigma^6}  \Big[ \big( q (1-r)^2 + (1-q) r^2 \big)^3 
+ 3 \tau \big( p (1-r)^2 + (1-p) r^2 \big) \big( q (1-r)^2 + (1-q) r^2 \big)^2 \\
&\qquad + (2\tau)^2 \Big( \big( p (1-r)^2 + (1-p) r^2 \big)^2 \big( q (1-r)^2 + (1-q) r^2 \big) + \big( p (1-r)^2 + (1-p) r^2 \big)^3 \Big) \Big] \\
&\le \frac{1}{\sigma^6}  \Big[ ( q + r^2 )^3 
+ 6 \tau p ( q + r^2 )^2 
+ 64 \tau^2 p^3 \Big] ,
\end{align*}
where the last step follows from \eqref{eq:pqr-cond}.
\end{itemize}
In summary, we have 
\begin{align*}
&\Var_{\cP}(S_3(A)) \\
&\le \sum_{\substack{H, H' \in \binom{[n]}{3} : \\ |H \cap H'| = 2}} \frac{4}{\sigma^6}  ( 2 \tau p + q + r^2 ) (r-q)^2 ( p-q )^2 
+ \sum_{H \in \binom{[n]}{3}} \frac{1}{\sigma^6}  \Big[ ( q + r^2 )^3 
+ 6 \tau p ( q + r^2 )^2 
+ 64 \tau^2 p^3 \Big] \\
&\le \frac{C}{\sigma^6} \Big( n^4 (\tau p + q + r^2) (r-q)^2 (p-q)^2 + n^3 \big( q^3 + r^6 + \tau p q^2 + \tau p r^4 + \tau^2 p^3 \big) \Big) 
\end{align*}
for an absolute constant $C > 0$.
\end{proof}

We are ready to prove Theorem~\ref{thm:detect-upper}.

\begin{proof}[Proof of Theorem~\ref{thm:detect-upper}]
In view of the assumption $p \ge C q$ for a constant $C > 1$ and the definition $r = \tau p + (1-\tau)q$ where $\tau \le 1/2$, we have $\mu = \frac{p-r}{\sqrt{r(1-r)}} = \Theta(p/r^{1/2})$.
By Propositions~\ref{prop:q-exp-var} and \ref{prop:p-exp}, we obtain
$$
\left| \E_{\cP}[S_3(A)] - \E_{\cQ}[S_3(A)] \right| \ge \binom{n}{3} \Big(\frac{\mu}{1-\tau}\Big)^3 \frac{\tau^2}{16}
= \Omega( n^3 \tau^2 p^3/r^{3/2} ) 
$$
and
$$
\Var_{\cQ}(S_3(A)) \le n^3 .
$$
Moreover, the definition $r = \tau p + (1-\tau)q$ where $\tau \le 1/2$ implies $r - q = \frac{\tau}{1-\tau} (p-r) \le 2 \tau p$.
Hence, the bound in Proposition~\ref{prop:p-var} simplifies to
$$
\Var_{\cP}(S_3(A)) 
= O \Big( \frac{1}{r^3} (n^4 \tau^3 p^5 + n^4 \tau^2 p^4 r + n^3 r^3 + n^3 \tau p r^2 + n^3 \tau^2 p^3) \Big) .
$$
Consequently, for $S_3(A)$ to strongly separate $\cP$ and $\cQ$, it suffices to have
$$
\sqrt{n^3 + n^4 \tau^3 p^5 / r^3 + n^4 \tau^2 p^4 / r^2 + n^3 \tau p / r + n^3 \tau^2 p^3 / r^3} = o( n^3 \tau^2 p^3 / r^{3/2} ) ,
$$
which (by dividing the square of the RHS by each term on the LHS) is equivalent to 
$$
\min \left\{ n^3 \tau^4 p^6/r^3 , n^2 \tau p , n^2 \tau^2 p^2/r , n^3 \tau^3 p^5/r^2 , n^3 \tau^2 p^3 \right\} = \omega(1) .
$$
The first and the last quantity on the LHS are assumed to be $\omega(1)$ in \eqref{eq:detect-upper-cond}, and the middle three quantities are $\omega(1)$ because $(n^2 \tau p)^3 \ge (n^3 \tau^2 p^3)^2$, $(n^2 \tau^2 p^2/r)^3 \ge (n^3 \tau^4 p^6/r^3) \cdot (n^3 \tau^2 p^3)$, and $(n^3 \tau^3 p^5/r^2)^3 \ge (n^3 \tau^4 p^6/r^3)^2 \cdot (n^3 \tau^2 p^3)$.
\end{proof}

\subsection{Recovery lower bound}
\label{sec:recover-lower}

We first prove Proposition~\ref{prop:lower-bound-estimation-general}.

\begin{proof}[Proof of Proposition~\ref{prop:lower-bound-estimation-general}]
Recall Definition~\ref{def:general-binary-model}.
Define $Z, Y \in \R^N$ by $Z_i := \frac{B_i - q}{\sqrt{q(1-q)}}$ and $Y_i := \frac{A_i - q}{\sqrt{q(1-q)}}$ for $i \in [N]$.
Since any polynomial of in $(A_i)_{i \in [N]}$ is also a polynomial in $(Y_i)_{i \in [N]}$ of the same degree (and vice versa), we have
$$
\Corr_{\le D} 
= \sup_{\substack{f \in \R[Y]_{\le D}, \\ \E[f(Y)^2] \ne 0}} \frac{\E[f(Y) \cdot \chi]}{\sqrt{\E[f(Y)^2]}} . 
$$
For $f \in \R[Y]_{\le D}$, we can write 
$$
f(Y) = \sum_{\alpha \subseteq [N] \,:\, |\alpha| \le D} \hat f_\alpha \, Y^\alpha ,
$$
where $Y^\alpha := \prod_{i \in \alpha} Y_i$ and $\hat f_\alpha$ denotes the coefficient of $f$ in the basis $\{Y^\alpha \,:\, \alpha \subseteq [N]\}$.

Recall that $\lambda = \frac{p-q}{\sqrt{q(1-q)}}$. 
Then we have $\E[Y_i \mid i \in W] = \lambda$.
It holds that
$$
\E[f(Y) \, \chi] = \sum_{\alpha \subseteq [N] \,:\, |\alpha| \le D} \hat f_\alpha \, \E[Y^\alpha \, \chi]
=: \langle \hat f, v \rangle ,
$$
where 
\begin{equation}
v_\alpha := \E[Y^\alpha \, \chi]
= \p\{\alpha \cup \{1\} \subseteq W\} \cdot \lambda^{|\alpha|}. 
\label{eq:def-v-alpha}
\end{equation}

Moreover, by Jensen's inequality, 
$$
\E[f(Y)^2] \ge \E \Big[ \big( \E[ f(Y) \mid Z ] \big)^2 \Big] 
=: \E[ g(Z)^2 ] ,
$$
where 
$$
g(Z) := \E[ f(Y) \mid Z ]
= \sum_{\alpha \subseteq [N] \,:\, |\alpha| \le D} \hat f_\alpha \, \E[ Y^\alpha \mid Z ] .
$$
Moreover, we have 
$$
\E[ Y^\alpha \mid Z ] 
= \sum_{\beta \subseteq \alpha} \p\{ \alpha \setminus W = \beta \} \cdot Z^\beta \lambda^{|\alpha|-|\beta|} .
$$
Together with the definitions of $g(Z)$ and $P_{\alpha \beta}$, this implies that
\begin{align*}
g(Z) = 
\sum_{\beta \subseteq [N] \,:\, |\beta| \le D} Z^\beta \sum_{\alpha \subseteq [N] \,:\, \alpha \supseteq \beta, \, |\alpha| \le D} \hat f_\alpha \, \lambda^{|\alpha|-|\beta|} \, P_{\alpha \beta} 
= \sum_{\beta \subseteq [N] \,:\, |\beta| \le D} \hat g_\beta Z^\beta ,
\end{align*}
where
$$
\hat g_\beta = \sum_{\alpha \subseteq [N] \,:\, \alpha \supseteq \beta, \, |\alpha| \le D} \hat f_\alpha \, \lambda^{|\alpha|-|\beta|} \, P_{\alpha \beta} .
$$
Therefore, we have $\hat g = M \hat f$ where the matrix $M$ is indexed by $\beta, \alpha \subseteq [N]$ with $|\beta|, |\alpha| \le D$, and $M$ is defined by 
\begin{equation}
M_{\beta \alpha} := \bbone_{\beta \subseteq \alpha} \, \lambda^{|\alpha|-|\beta|} \, P_{\alpha \beta} .
\label{eq:def-M-beta-alpha}
\end{equation}

Since the basis $\{Z^\beta : \beta \subseteq [N]\}$ is orthonormal, we have $\E[f(Y)^2] \ge \E[ g(Z)^2 ] = \|\hat g\|^2$. 
Note that $M$ is invertible because it is upper triangular with nonzero diagonal entries. This means $\E[f(Y) \, \chi] = \langle \hat f , v \rangle = v^\top M^{-1} \hat g$. 
Therefore, 
\begin{align}
\Corr_{\le D} 
= \sup_{f \in \R[Y]_{\le D}, \, \E[f(Y)^2] \ne 0} \frac{ \E[f(Y) \, \chi] }{ \sqrt{\E[f(Y)^2]} } 
\le \sup_{\hat g \ne 0} \frac{v^\top M^{-1} \hat g}{\|\hat g\|} 
= \|v^\top M^{-1}\| =: \|w\| ,
\label{eq:corr-bd-w}
\end{align}
where $w$ is defined by $w^\top M = v^\top$. 
Moreover, we can solve for $w$ recursively as
\begin{equation}
w_\alpha = \frac{1}{M_{\alpha \alpha}} \Big( v_\alpha - \sum_{\beta \subsetneq \alpha} w_\beta M_{\beta \alpha} \Big) .
\label{eq:def-w-alpha}
\end{equation}

Let us define $\rho_\alpha := w_\alpha \lambda^{-|\alpha|}$. 
Then by \eqref{eq:def-v-alpha}, \eqref{eq:def-M-beta-alpha}, and \eqref{eq:def-w-alpha},
$$
\rho_\varnothing = w_\varnothing = v_\varnothing = \p\{ 1 \in W \} ,
$$
and
\begin{align*}
\rho_\alpha 
= \frac{1}{M_{\alpha \alpha}} \bigg( \lambda^{-|\alpha|} v_\alpha - \sum_{\beta \subsetneq \alpha} \lambda^{-(|\alpha| - |\beta|)} \rho_\beta M_{\beta \alpha} \bigg) 
= \frac{1}{P_{\alpha \alpha}} \bigg( \p\{\alpha \cup \{1\} \subseteq W\} - \sum_{\beta \subsetneq \alpha} \rho_\beta \, P_{\alpha \beta} \bigg) .
\end{align*}
The conclusion then follows from \eqref{eq:corr-bd-w} together with the definition of $\rho_\alpha$.
\end{proof}

We assume model $\cP$ in the rest of this section.

\begin{lemma}
\label{lem:empty-intersection}
For any $\alpha \subseteq \binom{[n]}{2}$, we have 
$$
P_{\alpha \alpha} \ge 1 - \tau \, |\alpha| \, (2 |\alpha| - 1) 
\ge 1 - 2 \tau |\alpha|^2 .
$$
\end{lemma}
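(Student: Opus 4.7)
The plan is a direct union bound. By Definition~\ref{def:general-binary-model} together with the identification \eqref{eq:W-meaning}, the quantity
\[
P_{\alpha\alpha} = \p\{\alpha \setminus W = \alpha\} = \p\{\alpha \cap W = \varnothing\}
\]
is simply the probability that every edge $(i,j) \in \alpha$ has $\dist(z_i,z_j) > \tau/2$.

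To obtain a clean polynomial dependence on $|\alpha|$, I would enlarge the event to the slightly stronger one that \emph{every} pair of distinct vertices $i,j \in V(\alpha)$ (not only those connected by an edge of $\alpha$) satisfies $\dist(z_i,z_j) > \tau/2$. Since $z_i, z_j \stackrel{\mathrm{iid}}{\sim} \Unif([0,1])$, a single such pair contributes $\p\{\dist(z_i,z_j) \le \tau/2\} = \tau$ (the length of the relevant arc on the circle of circumference $1$), so a union bound over the $\binom{|V(\alpha)|}{2}$ unordered pairs yields
\[
1 - P_{\alpha\alpha} \;\le\; \binom{|V(\alpha)|}{2}\,\tau.
\]
Combined with the elementary graph-theoretic bound $|V(\alpha)| \le 2|\alpha|$ (each edge is incident to at most two vertices) and the identity $\binom{2|\alpha|}{2} = |\alpha|(2|\alpha|-1)$, this produces the first inequality. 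The second inequality is immediate from $2|\alpha|-1 \le 2|\alpha|$.

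There is no substantive obstacle here: the argument is essentially a one-line union bound. I note that one could instead union-bound directly over the edges of $\alpha$ to obtain the strictly stronger estimate $P_{\alpha\alpha} \ge 1 - |\alpha|\tau$, but the vertex-pair form stated above matches the factor $|\alpha|(2|\alpha|-1)$ in the lemma verbatim and is the formulation most convenient for the downstream analysis of the cumulant-like quantities $\rho_\alpha$ in Section~\ref{sec:recover-lower}.
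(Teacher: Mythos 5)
Your proof is correct and follows essentially the same route as the paper's: both enlarge the event to require that \emph{all} pairs of vertices in $V(\alpha)$ are separated, bound the failure probability by $\binom{|V(\alpha)|}{2}\tau$, and finish with $|V(\alpha)| \le 2|\alpha|$. The only cosmetic difference is that you invoke a one-shot union bound where the paper sequentially conditions on the vertices and expands the resulting product $\prod_{m=1}^{|V(\alpha)|-1}(1-m\tau) \ge 1 - \tau\binom{|V(\alpha)|}{2}$; the two give the identical estimate.
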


\begin{proof}
Recall that $P_{\alpha \alpha} = \p\{\alpha \setminus W = \alpha\} = \p\{\alpha \cap W = \varnothing\}$.
By \eqref{eq:W-meaning}, we have
\begin{align*}
\p\{\alpha \cap W = \varnothing\} 
&= \p\{\dist(z_i, z_j) > \tau/2 \text{ for all } (i,j) \in \alpha\} \\
&\ge \p\{\dist(z_i, z_j) > \tau/2 \text{ for all } i,j \in V(\alpha), \, i \ne j\} \\
&\ge \prod_{m=1}^{|V(\alpha)|-1} (1 - m \tau) 
\ge 1 - \tau \, \binom{|V(\alpha)|}{2} .
\end{align*}
Since $|V(\alpha)| \le 2 \, |\alpha|$, the desired bound follows.
\end{proof}

\begin{lemma}
\label{lem:zero-cumulant}
For $\alpha \ne \varnothing$, suppose that $P_{\beta \beta} > 0$ for all $\beta \subseteq\alpha$.
We have $\rho_\alpha = 0$ in either of the following situations:
\begin{itemize}
\item
$1 \notin V(\alpha)$ or $2 \notin V(\alpha)$;
\item
$\alpha$ is a disconnected graph.
\end{itemize}
\end{lemma}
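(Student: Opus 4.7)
My plan is a strong induction on $|\alpha|$, starting from the defining recursion in its symmetric form
\[ \sum_{\beta \subseteq \alpha} \rho_\beta\, P_{\alpha\beta} = \p\{\alpha \cup \{(1,2)\} \subseteq W\}. \]
In each case, the inductive hypothesis should zero out most $\rho_\beta$ on the left-hand side; what remains should exactly match the probability on the right, forcing $\rho_\alpha = 0$ since $P_{\alpha\alpha} > 0$ by assumption.

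For the first case, suppose $1 \notin V(\alpha)$ (the case $2 \notin V(\alpha)$ is symmetric). Then every $\beta \subseteq \alpha$ inherits $1 \notin V(\beta)$, so the IH collapses the sum to just the $\beta = \varnothing$ contribution, $\tau\, P_{\alpha\varnothing} = \tau\, \p\{\alpha \subseteq W\}$. On the right, $\{\alpha \subseteq W\}$ is measurable with respect to $(z_v)_{v \ne 1}$, while $\p\{(1,2) \in W \mid z_2\} = \tau$ for every $z_2 \in [0,1]$ by rotational symmetry of the circle. Conditioning on $(z_v)_{v \ne 1}$ then yields $\p\{\alpha \cup \{(1,2)\} \subseteq W\} = \tau\, \p\{\alpha \subseteq W\}$, matching exactly.

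For the second case, I first dispose of the situation where $V(\alpha)$ misses $1$ or $2$ via Case~1, so I may assume $\{1,2\} \subseteq V(\alpha)$. I then write $\alpha = \alpha_1 \sqcup \alpha_2$, where $\alpha_1$ is the connected component containing vertex~$1$ and $\alpha_2 := \alpha \setminus \alpha_1 \ne \varnothing$ with $V(\alpha_1) \cap V(\alpha_2) = \varnothing$, and split on whether $2 \in V(\alpha_1)$ or $2 \in V(\alpha_2)$. In the sub-case $\{1,2\} \subseteq V(\alpha_1)$, the fact that $V(\alpha_2) \cap \{1,2\} = \varnothing$ lets the IH kill every $\rho_{\beta_1 \sqcup \beta_2}$ with $\beta_2 \ne \varnothing$ (either missing vertex~$1$ when $\beta_1 = \varnothing$, or being a disconnected proper subgraph of $\alpha$). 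The factorization $P_{\alpha, \beta_1} = P_{\alpha_1,\beta_1}\, \p\{\alpha_2 \subseteq W\}$ from vertex-disjointness lets me factor the surviving sum; applying the defining recursion for $\alpha_1$ to one factor and the analogous independence on the right, both sides collapse to $\p\{\alpha_1 \cup \{(1,2)\} \subseteq W\}\, \p\{\alpha_2 \subseteq W\}$.

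The delicate sub-case, where I expect the main obstacle to lie, is $1 \in V(\alpha_1)$ with $2 \in V(\alpha_2)$; here $\alpha_1$ involves $z_1$ but not $z_2$, $\alpha_2$ involves $z_2$ but not $z_1$, and the three events $\{\alpha_1 \subseteq W\}$, $\{\alpha_2 \subseteq W\}$, $\{(1,2) \in W\}$ are entangled precisely through $z_1, z_2$. The IH kills every nonempty $\beta$ (each $\beta_i$ omits one of $\{1,2\}$, and any disconnected union with both $\beta_i$ nonempty vanishes by the disconnected-IH), leaving only $\tau\, \p\{\alpha_1 \subseteq W\}\, \p\{\alpha_2 \subseteq W\}$ on the left. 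To match this on the right, I would condition on $(z_1,z_2)$ and invoke the key fact that $\p\{\alpha_i \subseteq W \mid z_i\}$ is constant in $z_i$ by shift-invariance of $\Unif([0,1])$ on the circle, hence equals $\p\{\alpha_i \subseteq W\}$. Pulling these constants out of the integral and using $\p\{(1,2) \in W\} = \tau$ gives the required match; without the rotational symmetry of the latent distribution this identity would fail, so this is the conceptual heart of the lemma.
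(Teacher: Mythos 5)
Your proposal is correct and follows essentially the same strategy as the paper: strong induction on $|\alpha|$ via the recursion $\sum_{\beta \subseteq \alpha} \rho_\beta P_{\alpha\beta} = \p\{\alpha \cup \{(1,2)\} \subseteq W\}$, exploiting vertex-disjointness together with rotation-invariance of $\Unif([0,1])$ on the circle to factor the probabilities. The only organizational difference is that the paper groups your ``delicate sub-case'' ($1$ and $2$ in different components) together with the ``missing vertex'' case into a single Case~1, since both admit the same factorization argument, while handling ``disconnected with $1,2$ in the same component'' (your middle sub-case) separately; you correctly pinpoint that the implicit shift-invariance underlying the factorization $\p\{A\cap B\cap C\} = \p\{A\}\p\{B\}\p\{C\}$ is the non-trivial probabilistic input, which the paper uses without calling it out.
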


\begin{proof}
To facilitate the proof, we consider two cases that are split in a different way from the two cases in the statement of the lemma:
\begin{enumerate}
\item
either $1 \notin V(\alpha)$, $2 \notin V(\alpha)$, or vertices $1$ and $2$ are in different connected components of $\alpha$;

\item
$\alpha$ is disconnected, but vertices $1$ and $2$ are in the same connected component of $\alpha$.
\end{enumerate}
Note that $|\alpha| \ge 1$ in Case~1 and $|\alpha| \ge 2$ in Case~2.
For both cases, we prove  $\rho_\alpha = 0$ by induction on $|\alpha|$.
Each proof will establish the base case and the induction step simultaneously.

\paragraph{Case 1:}
Let $G$ be the union of the graph $\alpha$ and the (potentially isolated) vertices $1$ and $2$.
Let $G_1$ denote the connected component of $G$ that contains $1$, and let $G_2$ be the complement of $G_1$ in $G$.
Let $E(G_i)$ denote the (potentially empty) edge set of $G_i$ for $i = 1,2$.
By \eqref{eq:W-meaning} and the fact that $G_1$ is not connected to $G_2$ , we see that
\begin{align*}
\p\{\alpha \cup \{(1, 2)\} \subseteq W\}
&= \p\{E(G_1) \subseteq W\} \cdot \p\{(1, 2) \in W\} \cdot \p\{E(G_2) \subseteq W\} \\
&= \tau \, \p\{E(G_1) \subseteq W\} \cdot \p\{E(G_2) \subseteq W\} \\
&= \tau \, \p\{\alpha \subseteq W\} 
= \tau \, P_{\alpha \varnothing} .
\end{align*}
In the base case $|\alpha|=1$, there is no nonempty $\beta \subsetneq \alpha$; in the case $|\alpha|>1$, if $\varnothing \subsetneq \beta \subsetneq \alpha$, then $\rho_\beta = 0$ by the induction hypothesis. 
Combining these facts with \eqref{eq:rho-recursion} gives
\begin{align*}
\rho_\alpha \, P_{\alpha \alpha}
= \p\{\alpha \cup \{(1, 2)\} \subseteq W\} - \sum_{\beta \subsetneq \alpha} \rho_\beta \, P_{\alpha \beta} 
= \tau \, P_{\alpha \varnothing} - \rho_\varnothing \, P_{\alpha \varnothing} = 0. 
\end{align*}
Since $P_{\alpha \alpha} > 0$ by assumption, we conclude that $\rho_\alpha = 0$.

\paragraph{Case 2:}
Consider a subgraph $\beta \subsetneq \alpha$.
If $1 \notin V(\alpha)$, $2 \notin V(\alpha)$, or vertices $1$ and $2$ are in different connected components of $\beta$, then $\rho_\beta = 0$ by Case~1 above.
If $\beta$ is disconnected while $1$ and $2$ are in the same connected component of $\beta$, then $\rho_\beta = 0$ by the induction hypothesis (and there is simply no such $\beta$ in the base case $|\alpha| = 2$).
Therefore, if $\rho_\beta \ne 0$, then $\beta$ must be a connected graph containing both vertices $1$ and $2$.

Let $\gamma$ be the connected component of $\alpha$ that contains vertices $1$ and $2$.
We obtain from \eqref{eq:rho-recursion} that 
\begin{align*}
\rho_\alpha \, P_{\alpha \alpha}
&= \p\{\alpha \cup \{(1, 2)\} \subseteq W\} - \sum_{\beta \subseteq \gamma} \rho_\beta \, P_{\alpha \beta} \\
&= \p\{\alpha \setminus \gamma \subseteq W\} \, \cdot \p\{\gamma \cup \{(1, 2)\} \subseteq W\} 
- \rho_{\gamma} \, P_{\alpha \gamma}
- \sum_{\beta \subsetneq \gamma} \rho_\beta \, P_{\alpha \beta} .
\end{align*}
Furthermore, using \eqref{eq:rho-recursion} again yields
\begin{align*}
\rho_\gamma 
&= \frac{ \p\{\gamma \cup \{(1, 2)\} \subseteq W\} }{P_{\gamma \gamma}} - \sum_{\beta \subsetneq \gamma} \rho_\beta \, \frac{P_{\gamma \beta}}{P_{\gamma \gamma}} .
\end{align*}
The above two equations together imply
\begin{align}
\rho_\alpha \, P_{\alpha \alpha}
= \p\{\gamma \cup \{(1, 2)\} \subseteq W\} \, \bigg( \p\{\alpha \setminus \gamma \subseteq W\} - \frac{P_{\alpha \gamma}}{P_{\gamma \gamma}} \bigg) 
- \sum_{\beta \subsetneq \gamma} \rho_\beta \, \bigg( P_{\alpha \beta} - P_{\alpha \gamma} \frac{P_{\gamma \beta}}{P_{\gamma \gamma}} \bigg) .
\label{eq:rho-alpha-P-alpha}
\end{align}
In view of the assumption $P_{\alpha \alpha}>0$, it remains to show that the two terms in the brackets are zero. 

First, by definition,
$$
P_{\alpha \gamma}
= \p\{\alpha \setminus W = \gamma\}
= \p\{\alpha \setminus \gamma \subseteq W, \, \gamma \cap W = \varnothing\} .
$$
Since $\gamma$ is disconnected from $\alpha \setminus \gamma$ by construction, we obtain
\begin{align*}
P_{\alpha \gamma}
= \p\{\alpha \setminus \gamma \subseteq W\} \cdot \p\{\gamma \cap W = \varnothing\} 
= \p\{\alpha \setminus \gamma \subseteq W\} \cdot P_{\gamma \gamma} .
\end{align*}
Hence the first term in \eqref{eq:rho-alpha-P-alpha} is zero. 

Second, since $\beta \subseteq \gamma \subseteq \alpha$, we have $\alpha \setminus W = \beta$ if and only if $\gamma \setminus W = \beta$ and $\alpha \setminus \gamma \subseteq W$. 
Thus
\begin{align*}
\frac{P_{\alpha \beta}}{P_{\gamma \beta}}
= \frac{\p\{\gamma \setminus W = \beta, \, \alpha \setminus \gamma \subseteq W\}}{\p\{\gamma \setminus W = \beta\}} 
= \p\{\alpha \setminus \gamma \subseteq W\} ,
\end{align*}
where the last equality holds because $\gamma$ is disconnected from $\alpha \setminus \gamma$ and thus $\alpha \setminus \gamma \subseteq W$ is independent of $\gamma \setminus W = \beta$. 
Note that this ratio does not depend on $\beta$, so we can set $\beta = \gamma$ and obtain
$$
\frac{P_{\alpha \gamma}}{P_{\gamma \gamma}} = \p\{\alpha \setminus \gamma \subseteq W\} = \frac{P_{\alpha \beta}}{P_{\gamma \beta}} .
$$
Consequently, $P_{\alpha \beta} - P_{\alpha \gamma} \frac{P_{\gamma \beta}}{P_{\gamma \gamma}} = 0$ and so the second term in \eqref{eq:rho-alpha-P-alpha} is also zero. 
\end{proof}

\begin{lemma}
Fix $\alpha \subseteq \binom{[n]}{2}$ such that $1, 2 \in V(\alpha)$ and $\alpha$ is a connected graph.
We have
\begin{align}
\p\{\alpha \cup \{(1, 2)\} \subseteq W\}
\le P_{\alpha \varnothing} 
\le (\tau \, |V(\alpha)|)^{|V(\alpha)|-1} 
\label{eq:x-exp-bd-1} .
\end{align}
Moreover, fix $\beta \subseteq \binom{[n]}{2}$ such that $\varnothing \subsetneq \beta \subsetneq \alpha$ and $1, 2 \in V(\beta)$. 
We have 
\begin{equation}
P_{\alpha \beta} \le (\tau \, |V(\alpha)|)^{|V(\alpha)| - |V(\beta)|} .
\label{eq:x-exp-bd-2}
\end{equation}
\end{lemma}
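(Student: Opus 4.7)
The plan is to handle the three inequalities in turn; all of them reduce to a spanning-tree computation on the circle.

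For the chain $\p\{\alpha \cup \{(1,2)\} \subseteq W\} \le P_{\alpha \varnothing} \le (\tau |V(\alpha)|)^{|V(\alpha)|-1}$ in \eqref{eq:x-exp-bd-1}, the first inequality is immediate from monotonicity: the event $\alpha \cup \{(1,2)\} \subseteq W$ implies $\alpha \subseteq W$, and $\p\{\alpha \subseteq W\} = \p\{\alpha \setminus W = \varnothing\} = P_{\alpha\varnothing}$. For the second inequality, I would fix a spanning tree $T$ of the connected graph $\alpha$ rooted at vertex $1$. Then $\alpha \subseteq W$ forces, in particular, $\dist(z_v, z_{p(v)}) \le \tau/2$ for every non-root vertex $v \in V(\alpha)$, where $p(v)$ denotes the parent of $v$ in $T$. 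Processing the $|V(\alpha)|-1$ non-root vertices in BFS order and conditioning one at a time on the previously revealed coordinates, each step contributes a factor of exactly $\tau$ (the measure, on the unit circle, of points within $\tau/2$ of a given point). Multiplying, we obtain $P_{\alpha\varnothing} \le \tau^{|V(\alpha)|-1} \le (\tau |V(\alpha)|)^{|V(\alpha)|-1}$.

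For \eqref{eq:x-exp-bd-2}, I would first drop the requirement that edges of $\beta$ lie outside $W$ and use the upper bound
\[
P_{\alpha\beta} = \p\{\alpha \cap W = \alpha \setminus \beta\} \le \p\{\alpha \setminus \beta \subseteq W\}.
\]
Set $U := V(\alpha) \setminus V(\beta)$, so $|U| = |V(\alpha)|-|V(\beta)|$ and $1,2 \notin U$. The crucial observation is that any edge of $\alpha$ incident to a vertex of $U$ cannot belong to $\beta$, since otherwise that vertex would lie in $V(\beta)$. Now fix a spanning tree $T$ of $\alpha$ rooted at vertex $1 \in V(\beta)$ and, for each $u \in U$, let $p(u)$ be its parent in $T$. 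The $|U|$ edges $(u,p(u))$ are distinct and all lie in $\alpha \setminus \beta$, so the event $\alpha \setminus \beta \subseteq W$ forces $\dist(z_u, z_{p(u)}) \le \tau/2$ for every $u \in U$.

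The execution is then the same chain-rule computation as before: condition on $(z_v)_{v \in V(\beta)}$ and reveal the coordinates $(z_u)_{u \in U}$ in DFS order from the root. At each step, $z_{p(u)}$ has already been determined (either by the conditioning or by a previous step, since $p(u) \in V(\beta) \cup U$ and, if $p(u) \in U$, it is an ancestor of $u$), so the conditional probability of $\dist(z_u, z_{p(u)}) \le \tau/2$ is $\tau$. Multiplying yields $P_{\alpha\beta} \le \tau^{|U|} \le (\tau |V(\alpha)|)^{|V(\alpha)|-|V(\beta)|}$. The main subtlety is purely bookkeeping: verifying that the parent edges $(u,p(u))$ for $u \in U$ are distinct, lie in $\alpha \setminus \beta$, and can be revealed in a well-defined order so that the chain rule gives independent factors of $\tau$; the connectedness of $\alpha$ and the hypothesis $1 \in V(\beta)$ make this clean.
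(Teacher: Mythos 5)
Your proof is correct, and it takes a genuinely different route from the paper's. You fix a spanning tree of $\alpha$ rooted at $1$ and reveal the latent coordinates sequentially (parents before children), so each tree edge contributes a clean factor of exactly $\tau$; this yields the sharper bounds $P_{\alpha\varnothing}\le \tau^{|V(\alpha)|-1}$ and $P_{\alpha\beta}\le \tau^{|V(\alpha)|-|V(\beta)|}$ directly, with the stated inequalities following trivially since $\tau\le \tau|V(\alpha)|$. The paper instead fixes a single reference vertex (for \eqref{eq:x-exp-bd-1}) or the whole boundary $V(\beta)$ (for \eqref{eq:x-exp-bd-2}), uses connectivity to produce a deterministic path of length at most $|V(\alpha)|$ from each remaining vertex to that reference, concludes $\dist(z_s,z_{t_s})\le |V(\alpha)|\tau/2$, and then uses independence in a single conditioning step; this is coarser (it picks up the $|V(\alpha)|$ factor) but avoids choosing a spanning tree and a reveal order. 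Your key observation that every edge of $\alpha$ incident to $U=V(\alpha)\setminus V(\beta)$ must lie in $\alpha\setminus\beta$ (because $\beta$'s endpoints are all in $V(\beta)$) plays exactly the role of the paper's claim that from each $s\in U$ there is a path to $V(\beta)$ staying inside $\alpha\setminus\beta$; your version is cleaner since the parent edges $(u,p(u))$ for $u\in U$ are automatically distinct and admit the chain-rule ordering, which the paper sidesteps by the cruder arc-length bound. Both proofs are valid; yours buys a strictly stronger estimate at essentially no extra cost.
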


\begin{proof}
By the definition of $W$, it holds that
\begin{align*}
&\p\{\alpha \cup \{(1, 2)\} \subseteq W\} 
= \p \big\{ \dist(z_i,z_j) \le \tau/2 \text{ for all } (i,j) \in \alpha \cup \{(1,2)\} \big\} , \\
&P_{\alpha \varnothing}
= \p\{\alpha \setminus W = \varnothing\} 
= \p \big\{ \dist(z_i,z_j) \le \tau/2 \text{ for all } (i,j) \in \alpha \big\} ,
\end{align*}
so the first inequality in \eqref{eq:x-exp-bd-1} is obvious.
Next, suppose $\dist(z_i,z_j) \le \tau/2$ for all $(i,j) \in \alpha$. 
Fix $\ell \in V(\alpha)$. 
Since $\alpha$ is a connected graph, there is a path from $\ell$ to any $i \in V(\alpha)$ that has length at most $|V(\alpha)|$. As a result, $\dist(z_i,z_\ell) \le |V(\alpha)| \cdot \tau/2$ for all $i \in V(\alpha)$. 
Conditional on any realization of $z_\ell$, the probability that $\dist(z_i,z_\ell) \le |V(\alpha)| \cdot \tau/2$ for all $i \in V(\alpha)$ is at most $(\tau \, |V(\alpha)|)^{|V(\alpha)|-1}$. 
Hence \eqref{eq:x-exp-bd-1} follows.

Next, we have
\begin{align*}
P_{\alpha \beta}
= \p\{\alpha \setminus W = \beta\} 
\le \p \big\{ \dist(z_i,z_j) \le \tau/2 \text{ for all } (i,j) \in \alpha \setminus \beta \big\} .
\end{align*}
Suppose $\dist(z_i,z_j) \le \tau/2$ for all $(i,j) \in \alpha \setminus \beta$. 
Fix any vertex $s \in V(\alpha) \setminus V(\beta)$. 
We claim that there is a path from $s$ to a vertex $t_s \in V(\beta)$ which has length at most $|V(\alpha)|$ and lies entirely in $\alpha \setminus \beta$. 
Given the claim, it follows that $\dist(z_s,z_{t_s}) \le |V(\alpha)| \cdot \tau/2$. 
Now, conditional on any realization of $\{z_i : i \in V(\beta)\}$, the probability that $\dist(z_s,z_{t_s}) \le |V(\alpha)| \cdot \tau/2$ for all $s \in V(\alpha) \setminus V(\beta)$ is at most $(\tau \, |V(\alpha)|)^{|V(\alpha)| - |V(\beta)|}$. 
Hence \eqref{eq:x-exp-bd-2} follows. 

It remains to prove the claim. Pick any $r \in V(\beta)$ and take a path from $s$ to $r$ in the graph $\alpha \cup \{(1,2)\}$. 
Since $1,2 \in V(\beta)$, the first edge in the path (that is, the edge adjacent to $s \in V(\alpha) \setminus V(\beta)$) is neither $(1,2)$ nor belongs to $\beta$, and so it must belong to $\alpha \setminus \beta$. 
Following the path, we can find the first vertex $t_s$ that is in $V(\beta)$. 
For the same reason, all edges between $s$ and $t_s$ must belong to $\alpha \setminus \beta$, proving the claim.
\end{proof}

\begin{lemma}
\label{lem:kappa-bd}
Fix $\alpha \subseteq \binom{[n]}{2}$ such that $1, 2 \in V(\alpha)$ and $\alpha$ is a connected graph. 
Suppose that $\tau |\alpha|^4 \le 0.1$. 
If $\alpha^*$ consists of the single edge $(1, 2)$, then $|\rho_{\alpha^*}| \le \tau$. 
More generally, 
\begin{equation*}
|\rho_\alpha| 
\le (1 + \tau |\alpha|^4) \, (|\alpha| + 1)^{|\alpha|} \, (\tau \, |V(\alpha)|)^{|V(\alpha)|-1} .
\end{equation*}
\end{lemma}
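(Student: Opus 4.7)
My plan is to proceed by induction on $|\alpha|$. For the base case $\alpha^* = \{(1,2)\}$, note that $P_{\alpha^*\alpha^*} = \p\{(1,2) \notin W\} = 1-\tau$, that $\p\{\alpha^* \cup \{(1,2)\} \subseteq W\} = \tau$, and that $P_{\alpha^*\varnothing} = \tau$; substituting into~\eqref{eq:rho-recursion} with $\rho_\varnothing = \tau$ yields $\rho_{\alpha^*} = (\tau - \tau\cdot\tau)/(1-\tau) = \tau$, giving the claimed bound directly.

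For the inductive step with $|\alpha| \ge 2$, apply the triangle inequality to~\eqref{eq:rho-recursion}:
\[
P_{\alpha\alpha}\,|\rho_\alpha|
\;\le\; \p\{\alpha \cup \{(1,2)\} \subseteq W\} + \tau\,P_{\alpha\varnothing} + \sum_{\varnothing \subsetneq \beta \subsetneq \alpha} |\rho_\beta|\,P_{\alpha\beta}.
\]
By Lemma~\ref{lem:zero-cumulant}, only subsets $\beta$ that are connected and contain both vertices $1$ and $2$ survive in the sum. I bound the first two terms by $(1+\tau)(\tau|V(\alpha)|)^{|V(\alpha)|-1}$ via~\eqref{eq:x-exp-bd-1}. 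For each surviving $\beta \subsetneq \alpha$, combine the inductive hypothesis $|\rho_\beta| \le (1+\tau|\beta|^4)(|\beta|+1)^{|\beta|}(\tau|V(\beta)|)^{|V(\beta)|-1}$ with the bound~\eqref{eq:x-exp-bd-2} on $P_{\alpha\beta}$ and the elementary inequality $|V(\beta)|^{|V(\beta)|-1}\,|V(\alpha)|^{|V(\alpha)|-|V(\beta)|} \le |V(\alpha)|^{|V(\alpha)|-1}$ to obtain
\[
|\rho_\beta|\,P_{\alpha\beta} \;\le\; (1+\tau|\alpha|^4)\,(|\beta|+1)^{|\beta|}\,(\tau|V(\alpha)|)^{|V(\alpha)|-1}.
\]

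The combinatorial crux is then $\sum_{k=1}^{|\alpha|-1} \binom{|\alpha|}{k}(k+1)^k \le (|\alpha|+1)^{|\alpha|}$. The dominant term sits at $k = |\alpha|-1$ and contributes $|\alpha|\cdot|\alpha|^{|\alpha|-1} = |\alpha|^{|\alpha|}$; consecutive terms decay by a factor of order $1/|\alpha|$, so the entire sum is at most $2|\alpha|^{|\alpha|}$, which is at most $(|\alpha|+1)^{|\alpha|}$ since $(1+1/|\alpha|)^{|\alpha|} \ge 2$. Dividing through by $P_{\alpha\alpha} \ge 1 - 2\tau|\alpha|^2$ (Lemma~\ref{lem:empty-intersection}) introduces a factor $1/(1-2\tau|\alpha|^2) = 1 + O(\tau|\alpha|^2)$ which, combined with the inductive correction $(1+\tau|\alpha|^4)$ and the additive slack from the first two terms, is meant to compress into the target factor $(1+\tau|\alpha|^4)$; the hypothesis $\tau|\alpha|^4 \le 0.1$ is precisely what keeps all of these multiplicative corrections bounded.

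The hardest part, I expect, will be the final bookkeeping: verifying that the combinatorial constant in the estimate above, the $P_{\alpha\alpha}^{-1}$ blow-up, and the accumulated inductive corrections collapse cleanly into a single factor $(1+\tau|\alpha|^4)$ rather than leaking out as an extra multiplicative constant larger than $1$. In particular, the comparison $2|\alpha|^{|\alpha|} \le (|\alpha|+1)^{|\alpha|}$ has very little slack, and extracting the remaining error into only $O(\tau|\alpha|^4)$ will require carefully separating the $\beta=\varnothing$ term (where the main cancellation with $\p\{\alpha\cup\{(1,2)\}\subseteq W\}$ occurs) from the genuinely inductive contributions.
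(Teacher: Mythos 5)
Your overall plan matches the paper's proof (induction on $|\alpha|$, triangle inequality on the recursion, Lemma~\ref{lem:zero-cumulant} to discard non-surviving $\beta$, and the bounds \eqref{eq:x-exp-bd-1}--\eqref{eq:x-exp-bd-2}), and your base case is identical. However, the specific route you take through the combinatorial sum breaks the final bookkeeping, and this is a genuine gap rather than a routine verification.

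Concretely: after extracting $(\tau|V(\alpha)|)^{|V(\alpha)|-1}$ and pulling out $(1+\tau|\alpha|^4)$, your inequality reads
\[
P_{\alpha\alpha}\,|\rho_\alpha| \le (\tau|V(\alpha)|)^{|V(\alpha)|-1}\Bigl[(1+\tau) + (1+\tau|\alpha|^4)\,S\Bigr], \qquad S := \sum_{k=1}^{|\alpha|-1}\binom{|\alpha|}{k}(k+1)^k,
\]
and you propose bounding $S \le 2|\alpha|^{|\alpha|} \le (|\alpha|+1)^{|\alpha|}$. This is too lossy. The slack between your bound and the target is only $(|\alpha|+1)^{|\alpha|} - 2|\alpha|^{|\alpha|}$, which equals $1$ when $|\alpha|=2$ — not nearly enough to absorb both the additive $(1+\tau)$ and the factor $P_{\alpha\alpha}^{-1} \ge (1-2\tau|\alpha|^2)^{-1}$. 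In fact for $|\alpha|=2$ the required inequality reduces to $57\tau + 1152\tau^2 \le 0$, which is false for every $\tau > 0$. Your instinct that this is where trouble lies is correct, but the proposed remedy (exploiting cancellation between the $\beta=\varnothing$ term and $\p\{\alpha\cup\{(1,2)\}\subseteq W\}$) is not how the paper closes the gap; for $|\alpha|\ge 2$ the paper simply upper-bounds both of those by $(\tau|V(\alpha)|)^{|V(\alpha)|-1}$ with no cancellation.

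The fix is to bound $S$ termwise via $(k+1)^k \le |\alpha|^k$ for $1 \le k \le |\alpha|-1$, yielding $S \le \sum_{k=1}^{|\alpha|-1}\binom{|\alpha|}{k}|\alpha|^k = (|\alpha|+1)^{|\alpha|} - 1 - |\alpha|^{|\alpha|}$ by the binomial theorem. The boundary terms $1 + |\alpha|^{|\alpha|} \ge 5$ (for $|\alpha|\ge 2$) are the slack that swallows the additive $2 \ge \frac{1+\tau}{1-2\tau|\alpha|^2}$. A related point: pulling out the factor $(1+\tau|\alpha|^4)$ from $|\rho_\beta| P_{\alpha\beta}$ before dividing by $P_{\alpha\alpha}$ forfeits the opportunity to absorb the $P_{\alpha\alpha}^{-1}$ blow-up; the paper instead uses $\frac{1+\tau|\beta|^4}{1-2\tau|\alpha|^2} \le 1 + \tau|\alpha|^4$ (valid since $|\beta|\le |\alpha|-1$ and $\tau|\alpha|^4 \le 0.1$), tucking both the inductive correction and the $P_{\alpha\alpha}^{-1}$ factor into the single coefficient $(1+\tau|\alpha|^4)$ before touching the binomial sum.
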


\begin{proof}
We prove the result by induction on $|\alpha|$. 
First, consider the base case $|\alpha| = 1$. We must have $(1,2) \in \alpha$ since $1, 2 \in V(\alpha)$.
By \eqref{eq:rho-recursion} and Lemma~\ref{lem:empty-intersection}, we have
$$
|\rho_{\alpha^*}|
\le \frac{1}{1-\tau} \bigg( \p\{\alpha^* \subseteq W\} - \rho_\varnothing \, P_{\alpha^* \varnothing} \bigg)
= \frac{\tau - \tau^2}{1-\tau} = \tau ,
$$
since $\p\{\alpha^* \subseteq W\} = P_{\alpha^* \varnothing} = \p\{ \dist(1, 2) \le \tau/2 \} = \tau$.

Next, fix $\alpha \subseteq \binom{[n]}{2}$ with $|\alpha| \ge 2$. 
Assume $|\rho_\beta| \le (1 + \tau |\beta|^4) \, (|\beta| + 1)^{|\beta|} \, (\tau \, |V(\beta)|)^{|V(\beta)|-1}$ for all $\beta \subsetneq \alpha$ as the induction hypothesis.
Applying \eqref{eq:rho-recursion} and Lemma~\ref{lem:empty-intersection} again, we obtain
\begin{align*}
|\rho_\alpha| \le 
\frac{1}{1 - 2 \tau |\alpha|^2} \bigg( \p\{\alpha \cup \{(1, 2)\} \subseteq W\} + |\rho_\varnothing| \, P_{\alpha \varnothing} + \sum_{\beta \subsetneq \alpha} |\rho_\beta| \, P_{\alpha \beta} \bigg) ,
\end{align*}
where $\rho_\beta = 0$ if either $1$ or $2$ is not in $V(\beta)$ by Lemma~\ref{lem:zero-cumulant}. 
We then apply \eqref{eq:x-exp-bd-1} and \eqref{eq:x-exp-bd-2} for $\beta$ such that $1, 2 \in V(\beta)$ to obtain 
\begin{align*}
|\rho_\alpha| &\le \frac{1}{1 - 2 \tau |\alpha|^2} \bigg( (1+\tau) \, (\tau \, |V(\alpha)|)^{|V(\alpha)|-1} + \sum_{\beta \subsetneq \alpha} |\rho_\beta| \, (\tau \, |V(\alpha)|)^{|V(\alpha)| - |V(\beta)|} \bigg) .
\end{align*}
Then, by the induction hypothesis $|\rho_\beta| \le (1 + \tau |\beta|^4) \, (|\beta| + 1)^{|\beta|} \, (\tau \, |V(\beta)|)^{|V(\beta)|-1}$ together with the assumption $\tau |\alpha|^4 \le 0.1$, we see that 
\begin{align*}
|\rho_\alpha| 
&\le 2 (\tau \, |V(\alpha)|)^{|V(\alpha)|-1} + \frac{1}{1 - 2 \tau |\alpha|^2} \cdot \sum_{\beta \subsetneq \alpha} (1 + \tau |\beta|^4) \, (|\beta| + 1)^{|\beta|} (\tau \, |V(\alpha)|)^{|V(\alpha)| - 1} \\
&= (\tau \, |V(\alpha)|)^{|V(\alpha)|-1} \, \bigg( 2 + \frac{1}{1 - 2 \tau |\alpha|^2} \cdot \sum_{\beta \,:\, \varnothing \subsetneq \beta \subsetneq \alpha} (1 + \tau |\beta|^4) \, (|\beta| + 1)^{|\beta|} \bigg) .
\end{align*}
Finally, since $\tau |\alpha|^4 \le 0.1$ and $|\beta| \le |\alpha|-1$, we have $\frac{1}{1 - 2 \tau |\alpha|^2} (1 + \tau |\beta|^4) \le (1 + \tau |\alpha|^4)$, and then
\begin{align*}
2 + \frac{1}{1 - 2 \tau |\alpha|^2} \sum_{\beta \,:\, \varnothing \subsetneq \beta \subsetneq \alpha} (1 + \tau |\beta|^4) \, (|\beta| + 1)^{|\beta|} 
&\le 2 + (1 + \tau |\alpha|^4) \sum_{i=1}^{|\alpha|-1} \sum_{\substack{\beta \,:\, \beta \subsetneq \alpha , \, |\beta| = i}} (i+1)^i \\
&= 2 + (1 + \tau |\alpha|^4) \sum_{i=1}^{|\alpha|-1} \binom{|\alpha|}{i} (i+1)^i \\
&\le (1 + \tau |\alpha|^4) \sum_{i=0}^{|\alpha|} \binom{|\alpha|}{i} |\alpha|^i \\
&= (1 + \tau |\alpha|^4) \, (|\alpha| + 1)^{|\alpha|}.
\end{align*}
Combining the above two displays finishes the induction.
\end{proof}

\begin{proposition}
\label{prop:corr-bound}
Recall \eqref{eq:corr-rho-bound} and \eqref{eq:rho-recursion}.
Suppose that $\tau D^4 \le 0.1$. 
We have:
\begin{itemize}
\item
If $n \tau^2 (D+1)^2 Q^{4 \sqrt{D}} \le 1/2$ where $Q := \max\{\lambda (D+1)^2, 1\}$, then
$$
\Corr_{\le D}^2 \le \tau^2 \big( 1+\lambda^2 + 4 n \tau^2 (D+1)^5 Q^{4 \sqrt{D}} \big) .
$$

\item
If $\lambda^2 (D+1)^4 M \le 1/2$ where $M := \max\{n \tau^2 (D+1)^2, 1\}$, then 
$$
\Corr_{\le D}^2 \le \tau^2 \big( 1 + 4 \lambda^2 (D+1)^{7} M \big).
$$
\end{itemize}
\end{proposition}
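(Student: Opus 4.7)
The plan is to start from the bound \eqref{eq:corr-rho-bound} and estimate the sum by organizing the subgraphs $\alpha$ according to their edge count $\ell = |\alpha|$ and vertex count $v = |V(\alpha)|$, then mirroring the double-sum analysis carried out in Proposition~\ref{prop:adv-bound}. First, I would peel off the trivial contributions: $\alpha = \varnothing$ gives exactly $\rho_\varnothing^2 = \tau^2$, and $\alpha = \{(1,2)\}$ gives at most $\tau^2 \lambda^2$ by the special case in Lemma~\ref{lem:kappa-bd}. For every remaining $\alpha$ with nonzero contribution, Lemma~\ref{lem:zero-cumulant} forces $\alpha$ to be connected with $1,2 \in V(\alpha)$, hence $|\alpha| \ge 2$ and $v \le \ell + 1 \le D+1$.

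Next, I would apply Lemma~\ref{lem:kappa-bd} to obtain
\[
\rho_\alpha^2 \le 2\,(D+1)^{2\ell}\,(\tau v)^{2(v-1)},
\]
using $\tau D^4 \le 0.1$ to absorb the $(1+\tau|\alpha|^4)^2$ factor into the constant $2$ and using $|\alpha|+1 \le D+1$. The number of connected graphs $\alpha$ on $[n]$ with $|\alpha| = \ell$, $|V(\alpha)| = v$, and $\{1,2\} \subseteq V(\alpha)$ is at most $\binom{n-2}{v-2}\cdot \min\bigl\{\binom{\binom{v}{2}}{\ell},\, 2^{\binom{v}{2}}\bigr\} \le n^{v-2}\cdot\min\bigl\{(D+1)^{2\ell},\, 2^{v^2}\bigr\}$. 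Bounding $v^{2(v-1)} \le (D+1)^{2(v-1)}$ and collecting these factors yields
\[
\Corr_{\le D}^2 \,\le\, \tau^2(1+\lambda^2) + 2\tau^2\!\!\sum_{\ell=2}^{D}\sum_{v=2}^{\ell+1} n^{v-2}\, (D+1)^{2\ell}\,\bigl(\tau(D+1)\bigr)^{2(v-1)}\!\lambda^{2\ell}\,\min\bigl\{(D+1)^{2\ell}, 2^{v^2}\bigr\}.
\]

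From here the calculation splits into the two regimes stated in the proposition, in direct parallel with the two cases treated in Proposition~\ref{prop:adv-bound}. For the first bound, I would split the inner sum at $v = \sqrt{D}$: for $v \le \sqrt{D}$ use $2^{v^2}$ together with $2\ell \le v^2 \le \sqrt{D}\,v$ to collect factors of $Q^{\sqrt{D}\,v}$, while for $v > \sqrt{D}$ use $(D+1)^{2\ell}$ to collect factors of $Q^{2D}$; in both subregions the assumption $n\tau^2(D+1)^2 Q^{4\sqrt{D}} \le 1/2$ ensures the resulting geometric series sums to a constant multiple of the leading term $n\tau^2(D+1)^5 Q^{4\sqrt{D}}$. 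For the second bound, I would not split on $v$; instead I would use the $(D+1)^{2\ell}$ bound uniformly, swap the order of summation, and exploit $\lambda^2(D+1)^4 M \le 1/2$ so the $\ell$-sum becomes a geometric series dominated by its $\ell=2$ term, and the $v$-sum is dominated by its $v=2$ term, producing the factor $\lambda^2(D+1)^7 M$.

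The main obstacle is just carefully tracking the exponents of $n$, $\tau$, $\lambda$, and $D+1$ in the two case analyses so that the geometric series converge and the final bounds match exactly the stated constants $4n\tau^2(D+1)^5 Q^{4\sqrt{D}}$ and $4\lambda^2(D+1)^7 M$. This is a bookkeeping exercise structurally identical to the proof of Proposition~\ref{prop:adv-bound}; the only substantive new inputs are the vanishing theorem for $\rho_\alpha$ (Lemma~\ref{lem:zero-cumulant}), which restricts the sum to connected graphs containing vertices $1$ and $2$ and thereby saves a factor of $n^2$ relative to the detection sum, and the bound from Lemma~\ref{lem:kappa-bd}, which produces the overall $\tau^2$ prefactor.
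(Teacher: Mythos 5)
Your approach matches the paper's proof almost step for step: peel off $\varnothing$ and the single edge $\{(1,2)\}$, invoke Lemma~\ref{lem:zero-cumulant} to restrict to connected graphs containing both $1$ and $2$, bound $\rho_\alpha^2$ by Lemma~\ref{lem:kappa-bd}, count graphs by $n^{v-2}\min\{\binom{\binom{v}{2}}{\ell},2^{\binom{v}{2}}\}$, and then do a two-case analysis, splitting at $v=\sqrt{D}$ for the first bound and using the $(D+1)^{2\ell}$ branch of the minimum for the second. This is exactly the paper's Proposition~\ref{prop:corr-bound}, and the analogy to Proposition~\ref{prop:adv-bound} that you invoke is the paper's own template.

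Two bookkeeping issues, though, would need to be fixed before the sketch compiles into a correct proof. First, your displayed intermediate bound has a spurious extra factor of $\tau^2$: Lemma~\ref{lem:kappa-bd} gives $\rho_\alpha^2\lambda^{2\ell}\lesssim (D+1)^{2\ell}(\tau v)^{2(v-1)}\lambda^{2\ell}$, whose $\tau$-exponent is $2v-2$, yet your display carries both a prefactor $2\tau^2$ \emph{and} the factor $(\tau(D+1))^{2(v-1)}$, giving $\tau^{2v}$. That claim is not derivable from the lemma (it asserts the sum is smaller than it actually is by a factor $\tau^2$); the paper writes $\tau^{2v-2}=\tau^2\cdot(\tau^2)^{v-2}$ precisely so the leading $\tau^2$ and the $(\tau^2)^{v-2}$ inside $(n\tau^2(D+1)^2)^{v-2}$ account for the whole exponent without double counting. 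Relatedly, your inner sum starts at $v=2$, but a connected $\alpha$ with $|\alpha|\ge 2$ and $1,2\in V(\alpha)$ necessarily has $v\ge 3$; starting at $v=2$ is not fatal (it only adds nonnegative terms) but signals the same accounting slip.

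Second, in Case 2 your description ``swap the order of summation\ldots the $v$-sum is dominated by its $v=2$ term'' is imprecise in a way that matters when $M>1$. The $v$-sum is only controlled because the constraint $\ell\ge v-1$ forces the inner $\ell$-sum's leading term to carry a factor $(\lambda^2(D+1)^4)^{v-1}$, so that the $v$-ratio becomes $\lambda^2(D+1)^4\cdot n\tau^2(D+1)^2\le\lambda^2(D+1)^4 M\le 1/2$; if one drops the constraint $\ell\ge v-1$ and sums $\ell$ from $2$ for every $v$, the $v$-sum is \emph{not} dominated by its smallest term once $n\tau^2(D+1)^2>1$. The paper avoids this subtlety by keeping $v$ on the inside, bounding $\sum_{v}(n\tau^2(D+1)^2)^{v-2}\le \ell M^\ell$ uniformly, and then letting the $\ell$-sum converge via $\lambda^2(D+1)^4M\le 1/2$. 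Your swap route can be made rigorous (and in fact yields a bound like $\lambda^4(D+1)^{10}M$, which under the standing hypothesis is even smaller than the stated $\lambda^2(D+1)^7M$), but as written the domination claim does not produce the advertised factor without the additional step of tracking the $v$-dependence of the $\ell$-sum's lower limit.
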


\begin{proof}
To ease the notation, we consider $D$ such that $\sqrt{D}/2$ is an integer; the proof can be easily adapted to the general case by using floors $\lfloor \cdot \rfloor$ or ceilings $\lceil \cdot \rceil$. 

To bound $\Corr_{\le D}^2$, we apply \eqref{eq:corr-rho-bound} and \eqref{eq:rho-recursion}.
By Lemma~\ref{lem:zero-cumulant}, it suffices to consider connected graphs $\alpha \subseteq \binom{[n]}{2}$ such that $1,2 \in V(\alpha)$, for otherwise $\rho_\alpha = 0$.
In the sequel, we focus on such $\alpha$ but suppress the conditions for brevity. 
Note that there is only one such $\alpha$ with $|\alpha|=1$, i.e., the graph $\alpha^*$ consisting of a single edge $(1,2)$. 
For other graphs, we have $|\alpha| \ge 2$ and $|V(\alpha)| \ge 3$. 
Since the graph $\alpha$ is connected, we have $|V(\alpha)| \le |\alpha| + 1$.
Applying \eqref{eq:corr-rho-bound}, \eqref{eq:rho-recursion}, Lemma~\ref{lem:kappa-bd}, and the assumption $\tau D^4 \le 0.1$, we obtain
\begin{align*}
\Corr_{\le D}^2 
&\le \rho_\varnothing^2 + \lambda^2 \rho_{\alpha^*}^2 + \sum_{\alpha \,:\, 2 \le |\alpha| \le D} \lambda^{2 |\alpha|}  \, (1 + \tau |\alpha|^4)^2 \, (|\alpha|+1)^{2 |\alpha|} \, (\tau \, |V(\alpha)|)^{2 |V(\alpha)|-2}\\
&\le \tau^2 + \lambda^2 \tau^2 + 2 \sum_{\ell=2}^{D} \sum_{v=3}^{D+1} \sum_{\substack{\alpha \,:\, |\alpha| = \ell , \, |V(\alpha)| = v}} \\ 
&\qquad \qquad \qquad \qquad \qquad \big( \lambda (D+1) \big)^{2 \ell} \, \big( \tau(D+1) \big)^{2v-2} \, \bbone\{ v -1 \le \ell \le v^2/2 \} .
\end{align*}
Since $1, 2 \in V(\alpha)$, there are at most $\binom{n}{v-2} \, \Big[ \binom{\binom{v}{2}}{\ell} \land 2^{\binom{v}{2}} \Big]$ graphs $\alpha$ with $|V(\alpha)| = v$ and $|\alpha| = \ell$. 
By the bounds $\binom{n}{v-2} \le n^{v-2}$, $\binom{\binom{v}{2}}{\ell} \le \binom{v}{2}^\ell \le (v-1)^{2 \ell} \le (D+1)^{2 \ell}$, and $2^{\binom{v}{2}} \le 2^{v^2}$, it follows that
\begin{align}
\Corr_{\le D}^2 
&\le \tau^2 (1+\lambda^2) + 2 \tau^2 (D+1)^2 \cdot \notag \\ 
& \sum_{\ell=2}^{D} \sum_{v=3}^{D+1} \Big[ (D+1)^{2\ell} \land 2^{v^2} \Big] \big( \lambda (D+1) \big)^{2 \ell} \, \big( n \tau^2 (D+1)^2 \big)^{v-2} \, \bbone\{ v -1 \le \ell \le v^2/2 \} .
\label{eq:corr-bound-intermediate}
\end{align}
Let us consider two cases.
\paragraph{Case 1:}
We bound the summation in \eqref{eq:corr-bound-intermediate} by splitting it into the following terms according to the value of $v$:
\begin{subequations}
\begin{align}
\Corr_{\le D}^2 
\le \tau^2 (1+\lambda^2) &+ 2 \tau^2 (D+1)^2 \cdot \notag \\ 
&\bigg[ \sum_{v=3}^{\sqrt{D}} \sum_{\ell=2}^{D} 2^{v^2} \big( \lambda (D+1) \big)^{2 \ell} \, \big( n \tau^2 (D+1)^2 \big)^{v-2} \, \bbone\{ \ell \le v^2/2 \} 
\label{eq:corr-term-1} \\
&+ \sum_{\ell=2}^{D} \sum_{v=\sqrt{D}+1}^{D+1} (D+1)^{2\ell} \big( \lambda (D+1) \big)^{2 \ell} \, \big( n \tau^2 (D+1)^2 \big)^{v-2} \bigg] .
\label{eq:corr-term-2}
\end{align}
\end{subequations}
Recall that $Q := \max\{\lambda (D+1)^2, 1\}$. 
Moreover, by assumption,
$$
Q^{\sqrt{D}} \, n \tau^2 (D+1)^2 \le 1/2 .
$$
For $v \le \sqrt{D}$, we have $2 \ell \le v^2 \le \sqrt{D} \, v$, so the sum in \eqref{eq:corr-term-1} is bounded by 
\begin{align*}
\sum_{v=3}^{\sqrt{D}} D Q^{\sqrt{D} \, v} \, \big( n \tau^2 (D+1)^2 \big)^{v-2} 
&= D Q^{2 \sqrt{D}} \, \sum_{v=3}^{\sqrt{D}}  \Big( Q^{\sqrt{D}} \, n \tau^2 (D+1)^2 \Big)^{v-2} \\
&\le 2 D Q^{2 \sqrt{D}} Q^{\sqrt{D}} \, n \tau^2 (D+1)^2 . 
\end{align*}
Next, $n \tau^2 (D+1)^2 \le 1/2$ by assumption, so the sum in \eqref{eq:corr-term-2} is bounded by 
\begin{align*}
\sum_{\ell=2}^{D} \sum_{v=\sqrt{D}+1}^{D+1} Q^{2 \ell} \, \big( n \tau^2 (D+1)^2 \big)^{v-2} 
&\le 2 D Q^{2D} \big( n \tau^2 (D+1)^2 \big)^{\sqrt{D}-1} \\
&\le 2 D \big( Q^{4 \sqrt{D}} n \tau^2 (D+1)^2 \big)^{\sqrt{D}/2} \\
&\le 2 D Q^{4 \sqrt{D}} n \tau^2 (D+1)^2 ,
\end{align*}
where the last step holds because $Q^{4 \sqrt{D}} n \tau^2 (D+1)^2 \le 1/2$.
Plugging the above two bounds into \eqref{eq:corr-term-1} and \eqref{eq:corr-term-2} respectively, we complete the proof.

\paragraph{Case 2:}
Continuing from \eqref{eq:corr-bound-intermediate}, we have
\begin{align*}
\Corr_{\le D}^2 
\le \tau^2 (1+\lambda^2) + 2 \tau^2 (D+1)^2
\sum_{\ell=2}^{D} \sum_{v=3}^{\ell+2} \big( \lambda (D+1)^2 \big)^{2 \ell} \, \big( n \tau^2 (D+1)^2 \big)^{v-2} .
\end{align*}
Recall that $M = \max\{n \tau^2 (D+1)^2, 1\}$ and $\lambda^2 (D+1)^4 M \le 1/2$. 
We conclude that 
\begin{align*}
\Corr_{\le D}^2 
&\le \tau^2 (1+\lambda^2) + 2 \tau^2 (D+1)^2 \sum_{\ell=2}^{D} \big( \lambda^2 (D+1)^4 \big)^\ell \, M^\ell \ell \\
&\le \tau^2 (1+\lambda^2) + 2 \tau^2 (D+1)^2 \cdot 2 \lambda^2 (D+1)^4 M D \\
&\le \tau^2 \big( 1 + 4 (D+1)^7 \lambda^2 M \big) ,
\end{align*}
finishing the proof.
\end{proof}

We now prove Theorem~\ref{thm:recover-lower}.

\begin{proof}[Proof of Theorem~\ref{thm:recover-lower}]
It suffices to apply Proposition~\ref{prop:corr-bound} to bound $\Corr_{\le D}^2$. 
Consider two cases:
\begin{itemize}
\item
If $(\log n)^{-100} \le \lambda \le O(1)$, then $n \tau^2 \le n^{-\delta/2}$ by \eqref{eq:recover-lower-cond}.
We now apply the first statement of Proposition~\ref{prop:corr-bound}.
Since $D = o \Big( \big(\frac{\log n}{\log \log n}\big)^2 \Big)$, we have $Q = \max\{\lambda (D+1)^2, 1\} = \tilde O(1)$ and
$n \tau^2 (D+1)^2 Q^{4 \sqrt{D}} 
\le n \tau^2 \cdot n^{o(1)} \le 1/2$.
It follows that
$$
\Corr_{\le D}^2 \le \tau^2 \big( 1+\lambda^2 + 4 n \tau^2 (D+1)^5 Q^{4 \sqrt{D}} \big) 
\le \tau^2 (1 + \lambda^2 + n \tau^2 \cdot n^{o(1)})
= O(\tau^2) .
$$

\item
Next, suppose that $\lambda \le (\log n)^{-100}$, $n \tau^2 \lambda^2 \le n^{-\delta}$, and $D \le (\log n)^{10}$, which hold by \eqref{eq:recover-lower-cond}.
We apply the second statement of Proposition~\ref{prop:corr-bound} in each of the following two subcases:
\begin{itemize}
\item 
If $\tau \le n^{-1/2}$, then $M = \max\{n \tau^2 (D+1)^2, 1\} \le (D+1)^2$ and $\lambda^2 (D+1)^4 M \le 1/2$.
Therefore, $\Corr_{\le D}^2 \le \tau^2 \big( 1 + 4 \lambda^2 (D+1)^{7} M \big) = O(\tau^2).$

\item
If $\tau > n^{-1/2}$, then $M = n \tau^2 (D+1)^2$ and $\lambda^2 (D+1)^4 M = n \tau^2 \lambda^2 (D+1)^6 \le 1/2$.
We again obtain $\Corr_{\le D}^2 \le \tau^2 \big( 1 + 4 \lambda^2 (D+1)^{7} M \big) = O(\tau^2).$
\end{itemize}
\end{itemize}
Combining the above cases, we conclude that $\Corr_{\le D} = O(\tau)$ if \eqref{eq:recover-lower-cond} holds.
This completes the proof once we recall Definition~\ref{def:weak-recovery} and that $\E_{\cP}[\chi] = \tau$.
\end{proof}

\subsection{Recovery upper bound}
\label{sec:recover-upper-proof}

For brevity, write $T = T(A)$ in the sequel.
We let $i_0 := 1$ and $i_{\ell+1} := 2$, so that a length-$(\ell+1)$ self-avoiding walk in consideration is through vertices $i_j$ for $j = 0,1,\dots,\ell+1$.
Hence we can rewrite \eqref{eq:def-stat-t} as
\begin{equation}
T = \sum_{\substack{3 \le i_1,\ldots,i_\ell \le n \\i_1\neq \cdots \neq i_\ell}} \prod_{j=0}^{\ell}  \tilde A_{i_j i_{j+1}} .
\label{eq:redefine-t}
\end{equation}
We assume $2 \tau (\ell+1) \le 1$ in the rest of this section.

\begin{lemma}\label{lem:centered Y property}
Let $\tilde A_{ij}$ be defined by \eqref{eq:def-a-tilde} and $\lambda$ be defined by \eqref{eq:def-lambda}.
We have
    \begin{itemize}
        \item $\E\kr{\tilde{A}_{ij}| z_i, z_j} = \lambda \cdot \bbone\{\dist(z_i,z_j) \le \tau/2\}$;
        \item $\E\kr{\tilde{A}_{ij}^2|z_i,z_j} \le p/q$.
    \end{itemize}
\end{lemma}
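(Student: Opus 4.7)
The plan is to handle each of the two conditional moments by cases, based on whether $\dist(z_i,z_j) \le \tau/2$ or not, since the conditional distribution of $A_{ij}$ given $(z_i, z_j)$ is simply $\Bern(p)$ or $\Bern(q)$ respectively according to Definition~\ref{def:mod-p}.

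For the first identity, I would substitute the definition $\tilde A_{ij} = (A_{ij}-q)/\sqrt{q(1-q)}$ and compute $\E[A_{ij} \mid z_i, z_j]$ in each case: it equals $p$ when $\dist(z_i,z_j) \le \tau/2$, giving $\E[\tilde A_{ij} \mid z_i, z_j] = (p-q)/\sqrt{q(1-q)} = \lambda$; otherwise it equals $q$, giving $0$. Combining the two cases yields exactly $\lambda \cdot \bbone\{\dist(z_i,z_j) \le \tau/2\}$.

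For the second inequality, I would use the identity $\E[(A_{ij}-q)^2 \mid z_i, z_j] = \Var(A_{ij} \mid z_i, z_j) + (\E[A_{ij} \mid z_i, z_j] - q)^2$. In the case $\dist(z_i,z_j) > \tau/2$, this evaluates to $q(1-q)$, so $\E[\tilde A_{ij}^2 \mid z_i, z_j] = 1 \le p/q$ since $p>q$. In the case $\dist(z_i,z_j) \le \tau/2$, it evaluates to $p(1-p)+(p-q)^2$, and the desired bound $p(1-p)+(p-q)^2 \le p(1-q)$ reduces (after canceling $p(1-p)$ and dividing by $p-q>0$) to the trivial inequality $p-q \le p$. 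Dividing through by $q(1-q)$ gives the claimed bound.

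Neither part presents a real obstacle; the lemma is essentially a bookkeeping computation that rescales the first and second moments of a Bernoulli variable to match the normalization in \eqref{eq:def-a-tilde} and \eqref{eq:def-lambda}. The only minor subtlety is recognizing that the bound $p/q$ in the second part is not tight when $\dist(z_i,z_j) > \tau/2$ (where the value is $1$), but is the right bound to state uniformly because it controls the worst case uniformly and is the form that will be convenient when the lemma is used downstream in bounding moments of $T(A)$.
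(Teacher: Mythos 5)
Your proof is correct and takes essentially the same approach as the paper: a direct case split on whether $\dist(z_i,z_j)\le\tau/2$, giving the conditional law $\Bern(p)$ or $\Bern(q)$, and then elementary moment computations for a shifted Bernoulli. The paper computes the second moment directly as $\frac{p(1-q)^2+(1-p)q^2}{q(1-q)}$ and bounds it in two steps, while you organize the same computation via the variance-plus-bias-squared identity; the arithmetic is equivalent and both reductions ultimately rest on $q\le p$.
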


\begin{proof}
The first statement is obvious in view of \eqref{eq:def-a-tilde} and \eqref{eq:def-lambda}.
For the second statement, note that if $\dist(z_i,z_j) > \tau/2$, then $\E\kr{\tilde{A}_{ij}^2|z_i,z_j} = 1$, and if $\dist(z_i,z_j) \le \tau/2$, then
$$
\E\kr{\tilde{A}_{ij}^2|z_i,z_j}
= \frac{p(1-q)^2 + (1-p)q^2}{q(1-q)}
\le \frac{p(1-q) + q^2}{q} \le \frac{p}{q}.
$$
\end{proof}

\begin{proposition}\label{prop:recovery-expectation}
If $\dist(z_1,z_2) > \frac{(\ell+1)\tau}{2}$, then $\E[T \cond z_1, z_2] = 0$. 
If $\dist(z_1,z_2) \le \frac{(\ell+1)\tau}{2}$, then 
    \begin{equation}
        \E\kr{T\cond z_1,z_2} = \binom{n-2}{\ell} \tau^{\ell} \lambda^{\ell+1}\int_{\frac{\ell}{2}+\frac{\dist(z_1,z_2)}{\tau}-\frac{1}{2}}^{\frac{\ell}{2}+\frac{\dist(z_1,z_2)}{\tau}+\frac{1}{2}}f_\ell(t)\rmd t ,
        \label{eq:cond-exp-t-z2}
    \end{equation}
    where $\PDFIH_\ell(x)$ is the probability density function of the Irwin-Hall distribution with parameter $\ell$, i.e.,
    \begin{equation}
        \PDFIH_\ell(x) = \frac{1}{(\ell - 1)!} \sum_{k=0}^{\lfloor x \rfloor} (-1)^k \binom{\ell}{k} (x - k)^{\ell-1} \quad \text{ for } x \in [0,\ell], 
        \label{eq:def-ih-pdf}
    \end{equation}
    and $\PDFIH_\ell(x) = 0$ otherwise. 
Moreover, $u \mapsto \E[T \cond \dist(z_1,z_2) = u]$ is a decreasing function on $[0, \frac{(\ell+1)\tau}{2}]$.
\end{proposition}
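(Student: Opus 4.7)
The plan is to condition on the latent vector $z$, use the conditional independence of the entries of $\tilde A$ together with Lemma~\ref{lem:centered Y property} to rewrite $\E[T \mid z]$ as $\lambda^{\ell+1}$ times a sum of indicators of ``feasible walks'', and then compute the resulting probability over the i.i.d.\ uniform $z_3,\ldots,z_n$ explicitly. By Lemma~\ref{lem:centered Y property} and the conditional independence of edges in model~$\cP$, every ordered tuple $(i_1,\ldots,i_\ell)$ of distinct indices from $\{3,\ldots,n\}$ contributes $\lambda^{\ell+1} P(z_1,z_2)$, where
\[
P(z_1,z_2) := \Pb\bigl(\dist(Z_{j-1},Z_j) \le \tau/2 \text{ for all } j=1,\ldots,\ell+1\bigr),
\]
with $Z_0=z_1$, $Z_{\ell+1}=z_2$, and $Z_1,\ldots,Z_\ell$ i.i.d.\ $\Unif([0,1])$. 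If $\dist(z_1,z_2) > (\ell+1)\tau/2$, then the triangle inequality on the circle forces $P(z_1,z_2)=0$, which yields the first statement.

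For the formula in the regime $u:=\dist(z_1,z_2)\le(\ell+1)\tau/2$, I would change variables to the signed increments $D_j:=Z_j-Z_{j-1}$ taken in $(-1/2,1/2]$. Because each $Z_j$ is uniform on the circle, the $D_j$'s are i.i.d.\ $\Unif((-1/2,1/2])$. The assumption $2\tau(\ell+1)\le 1$ guarantees that once $|D_j|\le\tau/2$ for $j=1,\ldots,\ell$, no wrap-around on the circle can occur, so the end-point constraint $|D_{\ell+1}|\le\tau/2$ reduces to $u-\sum_{j=1}^\ell D_j\in[-\tau/2,\tau/2]$ (any sign ambiguity in $u$ is inessential by the reflection $D_j\mapsto -D_j$). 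Rescaling $D_j=\tau(E_j-1/2)$ with $E_j$ i.i.d.\ $\Unif([0,1])$ and letting $S_\ell=\sum_{j=1}^\ell E_j$ have the Irwin--Hall density $f_\ell$, the probability collapses to $\tau^\ell\int_{\ell/2+u/\tau-1/2}^{\ell/2+u/\tau+1/2} f_\ell(t)\,\rmd t$, which together with the combinatorial count of ordered tuples yields \eqref{eq:cond-exp-t-z2}.

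Finally, monotonicity of $u\mapsto\E[T\mid\dist(z_1,z_2)=u]$ on $[0,(\ell+1)\tau/2]$ reduces via differentiation under the integral to the pointwise inequality
\[
f_\ell\bigl(\ell/2+u/\tau+1/2\bigr) \le f_\ell\bigl(\ell/2+u/\tau-1/2\bigr).
\]
Since $f_\ell$ is symmetric about $\ell/2$ and unimodal (being the convolution of $\ell$ symmetric unimodal densities), it suffices to compare distances to $\ell/2$: the upper point sits at distance $u/\tau+1/2$ and the lower at distance $|u/\tau-1/2|$, and the former dominates the latter for every $u\ge 0$ by a two-case check ($u/\tau<1/2$ vs.\ $u/\tau\ge 1/2$). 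I expect the main obstacle to be the careful treatment of the circular geometry in the change-of-variables step, specifically verifying that the no-wrap-around bound $2\tau(\ell+1)\le 1$ really does justify reducing the circular constraint to ordinary real arithmetic and hence to the Irwin--Hall density.
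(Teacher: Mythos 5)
Your proposal is correct and follows essentially the same strategy as the paper's proof: condition on $z$, use Lemma~\ref{lem:centered Y property} and conditional independence to reduce each ordered tuple's contribution to $\lambda^{\ell+1}$ times a geometric walk probability, reduce that probability to the Irwin--Hall density after passing to the i.i.d.\ increments of the walk, and finish monotonicity via the unimodality and symmetry of $f_\ell$. The only stylistic difference is that you do a single change of variables to the increments $D_j$ while the paper organizes the same computation as sequential conditioning on the events $E_0,\dots,E_{\ell-1}$, but the two presentations are mathematically identical, and your treatment of the no-wrap-around bound $2\tau(\ell+1)\le 1$ matches the implicit use in the paper.
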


\begin{proof}
Throughout this proof, we condition on $z_1$ and $z_2$, and use $\E$ and $\p$ to denote the conditional expectation and conditional probability respectively. 
By \eqref{eq:redefine-t} and the independence of $(A_{i_j i_{j+1}})_{j=0}^\ell$ conditional on $(z_{i_j})_{j=0}^{\ell+1}$, we have
$$
\E\kr{T} = \sum_{\substack{3 \le i_1,\ldots,i_\ell \le n \\i_1\neq \cdots \neq i_\ell}} \E \Bigg[ \prod_{j=0}^{\ell}\E \kr{ \tilde A_{i_ji_{j+1}} \cond (z_{i_s})_{s=0}^{\ell+1}} \Bigg] .
$$
Applying the first statement of Lemma~\ref{lem:centered Y property}, we then obtain
\begin{equation}
\E\kr{T} = \lambda^{\ell+1} \sum_{\substack{3 \le i_1,\cdots,i_\ell \le n \\i_1\neq \cdots \neq i_\ell}} \p \left\{ \dist(z_{i_s}, z_{i_{s+1}}) \le \tau/2 \text{ for all } s \in [\ell]  \right\} .
\label{eq:t-cond-exp-inter}
\end{equation}
If $\dist(z_{i_s}, z_{i_{s+1}}) \le \tau/2$ for all $s \in [\ell]$, then $\dist(z_1,z_2) \le (\ell+1)\tau/2$ by the triangle inequality.
Therefore, we see that
$\E[T ] = 0$ if $\dist(z_1,z_2) > (\ell+1)\tau/2$.

Next, suppose that $\dist(z_1,z_2) \le (\ell+1)\tau/2$.
Fix vertices $i_1, \dots, i_\ell$ and define
\begin{equation*}
E_s := \br{ \dfrak(z_{i_s},z_{i_{s+1}})\leq \frac{\tau}{2}}, \qquad
\EventExpect = \bigcap_{s=0}^{\ell} E_s,
\end{equation*}
where we suppress the dependency on $i_1, \dots, i_\ell$ for brevity.
We now compute $\p\{\cE\}$, i.e., the probability in \eqref{eq:t-cond-exp-inter}.
Let us write
\begin{equation}
\Pb\br{\EventExpect} 
= \prod_{s=0}^{\ell} \p\bigg\{ E_s \; \bigg| \; \bigcap_{j=0}^{s-1} E_j \bigg\} .
\label{eq:prob-product}
\end{equation}
Since $(z_{i_s})_{s=1}^\ell$ are i.i.d.\ uniform random variables in $[0,1]$, it is not hard to see that
\begin{equation}
\p\bigg\{ E_s \; \bigg| \; \bigcap_{j=0}^{s-1} E_j \bigg\} = \tau  \quad \text{ for } 0 \le s \le \ell-1.
\label{eq:prod-s-l-1}
\end{equation}

It remains to compute the conditional probability $\Pb\br{E_{\ell}\cond \bigcap_{j=0}^{\ell-1} E_j}$.
For any $0 \le s \le \ell-1$, conditional on any realization of $z_{i_0}, z_{i_1}, \dots, z_{i_s}$ and the event $E_s$, 
the random variable $z_{i_{s+1}} - z_{i_s}$ is uniform $[-\tau/2, \tau/2]$.
Crucially, this distribution does not depend on $z_{i_0}, z_{i_1}, \dots, z_{i_s}$.
Applying this argument for $s = 0,1, \dots, \ell-1$, we see that conditional on $\bigcap_{s = 0}^{\ell-1} E_s$, the random variables $z_{i_1} - z_{i_0}$, $z_{i_2} - z_{i_1}$, \dots, $z_{i_\ell} - z_{i_{\ell-1}}$ are i.i.d.\ and uniform in $[-\tau/2, \tau/2]$.
We can write
$$
z_{i_\ell} = z_{i_0} + \tau I_\ell - \frac{\ell \tau}{2} , \quad \text{ where }
I_\ell := \sum_{s=0}^{\ell-1} \Big( \frac{z_{i_{s+1}} - z_{i_s}}{\tau} + \frac 12 \Big) .
$$
Since $I_r$ is a sum of $\ell$ i.i.d.\ uniform random variables in $[0,1]$, it has the Irwin--Hall distribution with parameter $\ell$ (see, e.g., \cite{johnson1995continuous}).
Moreover, since $i_0 = 1$ and $i_{\ell+1} = 2$, the event $E_\ell$ occurs if and only if $\dist(z_2, z_1 + \tau I_\ell - \frac{\ell \tau}{2}) \le \tau/2$, i.e.,
$$
\frac{z_2 - z_1}{\tau} + \frac{\ell}{2} - \frac 12 \le I_\ell \le \frac{z_2 - z_1}{\tau} + \frac{\ell}{2} + \frac 12 .
$$
Let $\PDFIH_\ell (x)$ be the PDF of the Irwin--Hall distribution with parameter $\ell$.
Then
\begin{equation}
\p \bigg\{ E_{\ell}\; \bigg| \; \bigcap_{j=0}^{\ell-1} E_j \bigg\} = \int_{\frac{\ell}{2}+\frac{z_2-z_1}{\tau}-\frac{1}{2}}^{\frac{\ell}{2}+\frac{z_2-z_1}{\tau}+\frac{1}{2}} \PDFIH_\ell (t) \rmd t.
\label{eq:prob-e-l}
    \end{equation}

Plugging \eqref{eq:prod-s-l-1} and \eqref{eq:prob-e-l} into \eqref{eq:prob-product} and then combining the result with \eqref{eq:t-cond-exp-inter}, we obtain    \begin{equation*}
\E\kr{T} = \lambda^{\ell+1} \binom{n-2}{\ell} \, \tau^{\ell} \int_{\frac{\ell}{2}+\frac{z_2 - z_1}{\tau}-\frac{1}{2}}^{\frac{\ell}{2}+\frac{z_2 - z_1}{\tau}+\frac{1}{2}}f_\ell(t)\rmd t, 
\end{equation*}
which is almost \eqref{eq:cond-exp-t-z2}.
It remains to show that the above quantity is an even function in $u := z_2 - z_1$ and decreasing for $u \in [0, \frac{(\ell+1)\tau}{2}]$.
Its derivative as a function of $u$ is proportional to
\begin{equation}
f_\ell \Big( \frac{\ell}{2}+\frac{u}{\tau}+\frac{1}{2} \Big) - f_\ell \Big( \frac{\ell}{2}+\frac{u}{\tau}-\frac{1}{2} \Big) .
\label{eq:deri-diff}
\end{equation}
The PDF $f_\ell(t)$ is symmetric around $\ell/2$, increasing on $[0,\ell/2]$, decreasing on $[\ell/2, \ell]$, and zero outside $[0,\ell]$ (and the monotonicity of $f_\ell(t)$ on $[0,\ell/2]$ and $[\ell/2, \ell]$ is strict if $\ell > 1$).
Hence, the difference in \eqref{eq:deri-diff} is an odd function in $u$; it is positive if $u \in [-\frac{(\ell+1)\tau}{2}, 0]$ and negative if $u \in [0, \frac{(\ell+1)\tau}{2}]$.
Consequently, $\E[T]$ is an even function in $u = z_2 - z_1$, and it is increasing on $[-\frac{(\ell+1)\tau}{2}, 0]$ and decreasing on $[0, \frac{(\ell+1)\tau}{2}]$, proving the last statement.
\end{proof}

\begin{lemma}
\label{lem:separation lemma}
    Let $\epsilon \in (0, \tau/2)$.
    There is a constant $\constGap > 0$ depending only on $\ell$ such that
\begin{equation}
        \Delta(\epsilon):=\E\kr{T\cond \dfrak(z_1,z_2) = \frac{\tau}{2}} - \E\kr{T\cond \dfrak(z_1,z_2)=\frac{\tau}{2}+\epsilon} \ge \constGap \, n^\ell \epsilon \, \tau^{\ell-1} \lambda^{\ell+1}.
        \label{eq:def-delta-epsilon}
    \end{equation}
\end{lemma}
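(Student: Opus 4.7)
The plan is to apply the explicit formula from Proposition~\ref{prop:recovery-expectation} and reduce the claim to a purely analytic estimate on the Irwin--Hall density $f_\ell$. Define
\[
g(u) := \int_{\ell/2 + u/\tau - 1/2}^{\ell/2 + u/\tau + 1/2} f_\ell(t)\, \rmd t,
\]
so that \eqref{eq:cond-exp-t-z2} gives $\E[T \mid \dist(z_1,z_2) = u] = \binom{n-2}{\ell}\tau^\ell \lambda^{\ell+1}\, g(u)$ for $u \le (\ell+1)\tau/2$. Hence
\[
\Delta(\epsilon) = \binom{n-2}{\ell}\tau^\ell \lambda^{\ell+1}\,\bigl[g(\tau/2) - g(\tau/2+\epsilon)\bigr].
\]
Since $\binom{n-2}{\ell} \ge c_\ell' n^\ell$ for some $c_\ell' > 0$ and all $n$ sufficiently large depending on $\ell$, the lemma will follow once I show $g(\tau/2) - g(\tau/2+\epsilon) \ge c_\ell \epsilon / \tau$ for a positive constant $c_\ell$ depending only on $\ell$.

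Next I would differentiate under the integral sign, obtaining
\[
g'(u) = \frac{1}{\tau}\bigl[f_\ell\bigl(\tfrac{\ell}{2} + \tfrac{u}{\tau} + \tfrac12\bigr) - f_\ell\bigl(\tfrac{\ell}{2} + \tfrac{u}{\tau} - \tfrac12\bigr)\bigr].
\]
For $u \in [\tau/2,\, \tau/2 + \epsilon] \subseteq [\tau/2,\, \tau]$, the lower argument lies in $[\ell/2,\, \ell/2 + 1/2]$ and the upper in $[\ell/2 + 1,\, \ell/2 + 3/2]$. Since $f_\ell$ is symmetric about $\ell/2$ and strictly decreasing on $[\ell/2, \ell]$ (for $\ell \ge 2$), the two arguments both lie in the decreasing region and are separated by distance $1$, so the bracketed expression is strictly negative and, by continuity and compactness of the interval $[\ell/2, \ell/2 + 1/2]$ combined with the explicit polynomial form \eqref{eq:def-ih-pdf}, uniformly bounded away from zero by a constant $c_\ell > 0$ depending only on $\ell$. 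Concretely, $f_\ell(\tfrac{\ell}{2} + \tfrac{u}{\tau} - \tfrac12) \ge f_\ell(\ell/2 + 1/2) > 0$ while $f_\ell(\tfrac{\ell}{2} + \tfrac{u}{\tau} + \tfrac12) \le f_\ell(\ell/2 + 1)$, and $f_\ell(\ell/2 + 1/2) - f_\ell(\ell/2 + 1) > 0$ by strict monotonicity. Therefore $-g'(u) \ge c_\ell/\tau$ throughout $[\tau/2,\, \tau/2 + \epsilon]$.

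Integrating this bound yields $g(\tau/2) - g(\tau/2 + \epsilon) = -\int_{\tau/2}^{\tau/2 + \epsilon} g'(u)\, \rmd u \ge c_\ell \epsilon / \tau$, and combining with the prefactor and the binomial estimate gives $\Delta(\epsilon) \ge \constGap\, n^\ell \epsilon\, \tau^{\ell-1} \lambda^{\ell+1}$ with $\constGap = c_\ell c_\ell'$. The only mildly delicate step is verifying the uniform constant $c_\ell$ for the density gap; this is essentially an explicit computation from \eqref{eq:def-ih-pdf}, so no substantive obstacle is expected. Everything else is bookkeeping on top of Proposition~\ref{prop:recovery-expectation}.
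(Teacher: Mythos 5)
Your proof is correct and essentially the same as the paper's: where the paper shifts the two integrals, cancels the overlap, and applies the mean value theorem to each remaining piece, you differentiate $g$ under the integral sign and integrate the derivative back over $[\tau/2,\tau/2+\epsilon]$; these are the same calculation by the fundamental theorem of calculus, and both reduce to the strict decrease of $f_\ell$ on $[\ell/2,\ell]$. The only small omission is that you parenthetically restrict to $\ell\ge2$, whereas the lemma as stated also covers $\ell=1$ (where $f_1$ is the uniform density on $[0,1]$, so the lower argument gives density $1$ and the upper argument gives density $0$, yielding the constant $1$ directly), which the paper handles in a separate one-line case.
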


\begin{proof}
It follows from Proposition~\ref{prop:recovery-expectation} that
\begin{align*}
\Delta(\epsilon) &= \pr{\int_{\frac{\ell}{2}}^{\frac{\ell}{2}+1} \PDFIH_\ell(t)\rmd t - \int_{\frac{\ell}{2}+\frac{\epsilon}{\tau}}^{\frac{\ell}{2}+1+\frac{\epsilon}{\tau}}\PDFIH_\ell(t)\rmd t} \binom{n-2}{\ell} \tau^{\ell} \lambda^{\ell+1} \\
&= \pr{\int_{\frac{\ell}{2}}^{\frac{\ell}{2}+\frac{\epsilon}{\tau}}\PDFIH_\ell(t)\rmd t 
-\int_{\frac{\ell}{2}+1}^{\frac{\ell}{2}+1+\frac{\epsilon}{\tau}}\PDFIH_\ell(t)\rmd t} \binom{n-2}{\ell} \tau^{\ell} \lambda^{\ell+1}.
\end{align*}
By the mean value theorem, there exists $\xi_1 \in \pr{\frac{\ell}{2},\frac{\ell}{2}+\frac{\epsilon}{\tau}}$ and $\xi_2 \in \pr{\frac{\ell}{2}+1,\frac{\ell}{2}+1+\frac{\epsilon}{\tau}}$
such that
    \begin{equation}
        \Delta(\epsilon) = \frac{\epsilon}{\tau}\pr{\PDFIH_{\ell}(\xi_1)-\PDFIH_{\ell}(\xi_2)} \binom{n-2}{\ell} \tau^{\ell} \lambda^{\ell+1}.
        \label{eq:delta-eps-gap}
    \end{equation}
If $\ell = 1$, then $\PDFIH_{\ell}(\xi_1)-\PDFIH_{\ell}(\xi_2) = 1$ as $\eps/\tau \in (0,1/2)$;
if $\ell > 1$, then $\PDFIH_\ell(x)$ is strictly decreasing for $x \in [\ell/2, \ell]$, so $\PDFIH_{\ell}(\xi_1)-\PDFIH_{\ell}(\xi_2) \ge c'_\ell$ for a constant $c'_\ell > 0$.
The conclusion follows from \eqref{eq:delta-eps-gap}.
\end{proof}

Recall that we identify an edge set $\alpha \subseteq \binom{[n]}{2}$ with the graph induced by $\alpha$, and $V(\alpha) \subseteq [n]$ denotes the vertex set of $\alpha$.
Recall \eqref{eq:def-saw}.
For $\alpha,\beta\in \SAW$, we consider the graph $\alpha \triangle \beta$ and introduce the following notation which will be used in the rest of this section:
\begin{subequations}
\label{eq:def-evc}
\begin{align}
\esf &:= |\alpha \triangle \beta| , \\
\vsf &:= |V(\alpha \triangle \beta)| , \\
\csf &:= \text{number of connected components of } \alpha \triangle \beta .
\end{align}
\end{subequations}

\begin{lemma}\label{lem:variance summand bound}
For $\alpha,\beta\in \SAW$, let $\esf$, $\vsf$, and $\csf$ be defined in \eqref{eq:def-evc}. 
Recall \eqref{eq:def-a-alpha}.
We have
\begin{align*}
\E \left[\tilde{A}_{\alpha\cap \beta}^2 \tilde{A}_{\alpha\triangle \beta} \mid z_1, z_2 \right]
\le 
\begin{cases}
(p/q)^{\ell+1} 
& \text{ if } \alpha \triangle \beta = \varnothing ,\\
(p/q)^{\ell+1-\esf/2} \,
\lambda^{\esf} (\ell\tau)^{\vsf-\csf-1} & \text{ if } \alpha \triangle \beta \ne \varnothing .
\end{cases}
\end{align*}
\end{lemma}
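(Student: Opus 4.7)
The plan is to use the conditional independence of the $\tilde{A}_{ij}$'s given the latent vector $z$, together with Lemma~\ref{lem:centered Y property}, to reduce the expectation to a purely geometric probability on the vertices of $\alpha \triangle \beta$. Conditional on $z$, the variables $\tilde{A}_{ij}$ are independent, so
\[
\E\!\left[\tilde{A}_{\alpha\cap\beta}^2\,\tilde{A}_{\alpha\triangle\beta}\,\bigm|\, z\right]
= \prod_{(i,j)\in \alpha\cap\beta} \E[\tilde{A}_{ij}^2 \mid z_i,z_j]
\cdot \prod_{(i,j)\in \alpha\triangle\beta} \E[\tilde{A}_{ij} \mid z_i,z_j].
\]
By Lemma~\ref{lem:centered Y property}, the first product is at most $(p/q)^{|\alpha\cap\beta|}$, and each factor in the second product equals $\lambda \cdot \bbone\{\dist(z_i,z_j)\le \tau/2\}$. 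Since $|\alpha|=|\beta|=\ell+1$, we have $|\alpha\cap\beta|=\ell+1-\esf/2$, so after taking conditional expectation over the free $z$-variables:
\[
\E\!\left[\tilde{A}_{\alpha\cap\beta}^2\,\tilde{A}_{\alpha\triangle\beta}\,\bigm|\, z_1,z_2\right]
\le (p/q)^{\ell+1-\esf/2}\,\lambda^{\esf}\cdot \Pb\!\left\{\dist(z_i,z_j)\le \tau/2 \text{ for all } (i,j)\in \alpha\triangle\beta \,\bigm|\, z_1,z_2\right\}.
\]

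The case $\alpha\triangle\beta = \varnothing$ is immediate: both $\lambda^{\esf}$ and the probability equal $1$, giving the stated bound $(p/q)^{\ell+1}$. When $\alpha\triangle\beta \ne \varnothing$, the main step is to bound the geometric probability by $(\ell\tau)^{\vsf-\csf-1}$. I will decompose $\alpha\triangle\beta$ into its $\csf$ connected components $C_1,\dots,C_\csf$, each with $v_t$ vertices (so $\sum_t v_t = \vsf$ and $v_t \ge 2$ since each component contains an edge). For each component $C_t$, I pick a root vertex $r_t$, preferring vertex $1$ or $2$ if it lies in $C_t$; then, since $C_t$ is connected, every other vertex in $C_t$ is joined to $r_t$ by a path of length at most $v_t-1\le \ell$, so the event forces every non-root vertex $v\in C_t$ to satisfy $\dist(z_v,z_{r_t})\le (v_t-1)\tau/2\le \ell\tau/2$. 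Whether $r_t$ is one of $z_1,z_2$ (fixed) or a free uniform variable (integrated out), conditional on $r_t$ each of the $v_t-1$ non-root vertices of $C_t$ lies in an interval of length $\ell\tau$ with probability at most $\ell\tau$. Multiplying over the components, the probability is at most $(\ell\tau)^{\sum_t(v_t-1)}=(\ell\tau)^{\vsf-\csf}$, and using $\ell\tau\le 1$ (from $2\tau(\ell+1)\le 1$) this is in turn bounded by $(\ell\tau)^{\vsf-\csf-1}$, matching the claim.

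The main obstacle is verifying the per-component probability bound cleanly when the chosen root is one of the conditioned vertices $z_1,z_2$, since the free vertices are i.i.d.\ uniform \emph{independent} of $z_1,z_2$, so fixing the root still gives each non-root vertex an independent probability $\le \ell\tau$ of lying in the required neighborhood. Once this step is in place, assembling the three pieces (the $(p/q)$-factor from $\alpha\cap\beta$, the $\lambda^{\esf}$ from $\alpha\triangle\beta$, and the geometric probability) yields the stated inequality.
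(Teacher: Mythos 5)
Your overall approach is the same as the paper's: condition on $z$, apply Lemma~\ref{lem:centered Y property} to factor out the $(p/q)$ and $\lambda$ powers, and reduce to bounding the geometric event on $\alpha \triangle \beta$ by a root-per-component counting argument. However, there is a genuine gap in your intermediate claim that the geometric probability is at most $(\ell\tau)^{\vsf-\csf}$. Your argument multiplies a factor of $\ell\tau$ for each of the $\vsf-\csf$ non-root vertices, but this requires each non-root vertex's $z$-coordinate to be a \emph{free} uniform variable. When both vertices $1$ and $2$ lie in $V(\alpha\triangle\beta)$ and in the same connected component (this happens, e.g., when $\alpha$ and $\beta$ are edge-disjoint except at the endpoints, so $\alpha\triangle\beta$ is a single cycle through $1$ and $2$), you can take at most one of them as the root of that component, and the other is a non-root vertex whose coordinate is \emph{conditioned on}, not integrated out. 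That vertex contributes a factor of $0$ or $1$ — not $\le \ell\tau$ — so the count of free vertices in that component is $v_t-2$, not $v_t-1$, and the overall probability is only $(\ell\tau)^{\vsf-\csf-1}$, not $(\ell\tau)^{\vsf-\csf}$.

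Your final bound $(\ell\tau)^{\vsf-\csf-1}$ is still correct, but only because you happen to weaken the (unjustified) $(\ell\tau)^{\vsf-\csf}$ by exactly one factor of $\ell\tau$ at the end; the saving is accidental rather than the result of a valid intermediate step. The paper avoids this by directly counting the set $V(\alpha\triangle\beta)\setminus(\{j_1,\dots,j_\csf\}\cup\{1,2\})$ of genuinely free vertices and noting it has size at least $\vsf-\csf-1$ once $j_1 := 1$ is chosen when $1 \in V(\alpha\triangle\beta)$. In fact, the discrepancy between $(\ell\tau)^{\vsf-\csf}$ and $(\ell\tau)^{\vsf-\csf-1}$ is exactly the content of the paper's follow-up Lemma~\ref{lem:variance summand bound ver 2}: the stronger exponent $\vsf-\csf$ is available precisely when $\dist(z_1,z_2) > (\ell+1)\tau/2$ forces $1$ and $2$ into different components of $\alpha\triangle\beta$, so that both can be taken as roots. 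Your claim that $(\ell\tau)^{\vsf-\csf}$ holds unconditionally would render that second lemma superfluous, which should have been a red flag.
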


\begin{proof}
For brevity, write $z = \{z_i : i \in V(\alpha \cup \beta)\}$,
and let $\E$ and $\p$ be the expectation and probability conditional on $z_1, z_2$ in the proof.
By the independence of $(\tilde{A}_{ij})_{(i,j) \in \alpha\cap \beta}$ and $(\tilde{A}_{ij})_{(i,j) \in \alpha\triangle \beta}$ conditional on $z$, we have
\begin{equation*}
\E\kr{\tilde{A}_{\alpha\cap \beta}^2\tilde{A}_{\alpha\triangle \beta}}
    = \E \bigg[ \prod_{(i,j) \in \alpha \cap \beta} \E\kr{\tilde{A}_{ij}^2\cond z} \cdot \prod_{(i,j) \in \alpha \triangle \beta} \E\kr{\tilde{A}_{ij}\cond z} \bigg].
\end{equation*}
It then follows from Lemma~\ref{lem:centered Y property} that
\begin{equation}
    \E\kr{\tilde{A}_{\alpha\cap \beta}^2\tilde{A}_{\alpha\triangle \beta}}
    \le 
    (p/q)^{|\alpha \cap \beta|} 
    \, \lambda^{\esf} \cdot \p\{ \dist(z_i,z_j) \le \tau/2 \text{ for all } (i,j) \in \alpha \triangle \beta \} .
    \label{eq:exp-a-tilde-sq}
\end{equation}
If $\alpha \triangle \beta = \varnothing$, then $\esf = 0$ and $|\alpha \cap \beta| = \ell+1$, so the first bound of the lemma follows.
For the second bound where $\alpha \triangle \beta \ne \varnothing$, note that 
$|\alpha \cap \beta| = \frac 12 (|\alpha| + |\beta| - |\alpha \triangle \beta|) = \frac 12 (2 \ell + 2 - \esf)$.
Hence, it remains to bound the probability in \eqref{eq:exp-a-tilde-sq} by $(\ell \tau)^{\vsf - \csf - 1}$.

Suppose that $\dist(z_i,z_j) \le \tau/2$ for all $(i,j) \in \alpha \triangle \beta$.
Choose vertices $j_1, \dots, j_{\csf} \in V(\alpha \triangle \beta)$, one from each of the $\csf$ connected components of $\alpha \triangle \beta$; in particular, if $1 \in V(\alpha \triangle \beta)$, we choose $j_1 = 1$. For every $i \in V(\alpha \triangle \beta) \setminus \{1,2\}$, there is a path of length at most $\ell$ from vertex $i$ to vertex $j_{s_i}$ for some $s_i \in [\csf]$ such that the path lies entirely in (the $s$th connected component of) $\alpha \triangle \beta$.
It follows that $\dist(z_i, z_{j_{s_i}}) \le \ell \tau/2$.
Therefore, 
\begin{align}
&\p\{ \dist(z_i,z_j) \le \tau/2 \text{ for all } (i,j) \in \alpha \triangle \beta \} \notag \\
&\le \p \Big\{ \dist(z_i,z_{j_{s_i}}) \le \ell \tau/2 \text{ for all } i \in V(\alpha \triangle \beta) \setminus (\{j_1, \dots, j_{\csf}\}\cup\{1,2\}) \Big\}
\le (\ell \tau)^{\vsf - \csf - 1} , \label{eq:ltau2vc1}
\end{align}
since the random variables $\{z_i : i \in V(\alpha \triangle \beta) \setminus (\{j_1, \dots, j_{\csf}\}\cup\{1,2\}) \}$ are i.i.d.\ uniform in $[0,1]$ conditional on any realization of $z_{j_1}, \dots, z_{j_{\csf}}, z_1, z_2$. 
Plugging the above bound into \eqref{eq:exp-a-tilde-sq} completes the proof.
\end{proof}

We now state a slightly improved version of the above lemma when $z_1$ and $z_2$ are far apart.

\begin{lemma}\label{lem:variance summand bound ver 2}
In the setting of the above lemma, if $\dist(z_1,z_2) > \frac{(\ell+1) \tau}{2}$ and $\alpha \triangle \beta \ne \varnothing$, we have
\begin{align*}
\E \left[\tilde{A}_{\alpha\cap \beta}^2 \tilde{A}_{\alpha\triangle \beta} \mid z_1, z_2 \right]
\le (p/q)^{\ell+1-\esf/2} \,
\lambda^{\esf} (\ell\tau)^{\vsf-\csf} .
\end{align*}
\end{lemma}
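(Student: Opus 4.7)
My plan is to follow the proof of the previous lemma verbatim up to the point where it suffices to bound $\p\{\dist(z_i, z_j) \le \tau/2 \text{ for all } (i,j) \in \alpha \triangle \beta \mid z_1, z_2\}$; the only change is that this probability factor must now be shown to be at most $(\ell\tau)^{\vsf-\csf}$, which saves one power of $\ell\tau$ compared to the previous bound. I would establish this by splitting on whether $1$ and $2$ lie in a common connected component of $\alpha \triangle \beta$.

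For the ``easy'' case---when it is \emph{not} true that both $1,2 \in V(\alpha \triangle \beta)$ sit in the same component---I would sharpen the representative choice of the previous proof by additionally taking some $j_s = 2$ whenever $2 \in V(\alpha \triangle \beta)$. This is unambiguous in the present case, since $2$ is either absent from $V(\alpha \triangle \beta)$ or lies in a component different from the one containing $1$. A direct recount then yields $|(\{j_1,\dots,j_\csf\} \cup \{1,2\}) \cap V(\alpha \triangle \beta)| = \csf$, so the number of ``free'' vertices improves from $\vsf - \csf - 1$ to $\vsf - \csf$, and exactly the same union-bound argument as before delivers $(\ell\tau)^{\vsf-\csf}$.

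For the ``hard'' case where $1, 2$ lie in the same component $C$ of $\alpha \triangle \beta$, I claim the probability is actually zero. The key structural fact is that $\alpha \triangle \beta$ is Eulerian: for each vertex $v$, the identity $\deg_{\alpha \triangle \beta}(v) = \deg_\alpha(v) + \deg_\beta(v) - 2\deg_{\alpha \cap \beta}(v)$, together with $\deg_\alpha(v), \deg_\beta(v) \in \{0,2\}$ for $v \notin \{1,2\}$ and $\deg_\alpha(v) = \deg_\beta(v) = 1$ for $v \in \{1,2\}$, makes every degree even. Therefore $C$ admits an Eulerian circuit starting at $1$; locating the first occurrence of $2$ on this circuit and taking the shorter of its two arcs gives a walk from $1$ to $2$ using at most $|E(C)|/2 \le \esf/2 \le \ell+1$ edges, and shortcutting any repeated vertices produces a simple path $P \subseteq C$ with $|P| \le \ell+1$. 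Under the event that every edge of $\alpha \triangle \beta$ has close endpoints, the edges of $P$ in particular are close, so the triangle inequality along $P$ forces $\dist(z_1, z_2) \le |P|\tau/2 \le (\ell+1)\tau/2$, contradicting the hypothesis.

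The main obstacle is the hard-case argument---specifically, recognizing the Eulerian structure of $\alpha \triangle \beta$ and invoking the standard Eulerian-circuit-to-shortest-path shortcut to bound the $1$-to-$2$ distance in $C$ by $\esf/2 \le \ell+1$. Once that geometric constraint is in place, the hypothesis $\dist(z_1, z_2) > (\ell+1)\tau/2$ is incompatible with the event and the probability is trivially zero; the easy case is then a routine tightening of the previous bookkeeping.
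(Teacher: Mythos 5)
Your proof is correct and follows essentially the same route as the paper: take both $1$ and $2$ as component representatives so the exponent in \eqref{eq:ltau2vc1} improves from $\vsf - \csf - 1$ to $\vsf - \csf$, exploiting the fact that $1$ and $2$ must lie in distinct components of $\alpha \triangle \beta$. The paper states that last fact without justification; your Eulerian-circuit argument (invoking the even-degree structure of $\alpha\triangle\beta$, which the paper records separately in the proof of Lemma~\ref{lem:vce-excess}) supplies the missing reasoning, and it is the right tool -- a naive spanning-tree path bound of $|V(C)|-1 \le \vsf - 1 \le 2\ell-1$ edges would only yield $\dist(z_1,z_2) \le (2\ell-1)\tau/2$, which is too weak to contradict the hypothesis, so the halved Eulerian circuit giving $\le \esf/2 \le \ell+1$ edges is genuinely needed.
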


\begin{proof}
The only difference from the above lemma is that we now have $(\ell\tau)^{\vsf-\csf}$ instead of $(\ell\tau)^{\vsf-\csf-1}$.
This difference originates from \eqref{eq:ltau2vc1}.
Recall that we suppose $\dist(z_i,z_j) \le \tau/2$ for all $(i,j) \in \alpha \triangle \beta$.
However, since $\dist(z_1,z_2) > \frac{(\ell+1)\tau}{2}$, vertices $1$ and $2$ cannot be in the same connected components of $\alpha \triangle \beta$.
Therefore, when selecting the vertices $j_1, \dots, j_{\csf}$, we can choose $j_1 = 1$ and $j_2 = 2$ without loss of generality.
Then \eqref{eq:ltau2vc1} becomes
\begin{align*}
&\p\{ \dist(z_i,z_j) \le \tau/2 \text{ for all } (i,j) \in \alpha \triangle \beta \} \notag \\
&\le \p \Big\{ \dist(z_i,z_{j_{s_i}}) \le \ell \tau/2 \text{ for all } i \in V(\alpha \triangle \beta) \setminus \{j_1, \dots, j_{\csf}\} \Big\}
\le (\ell \tau)^{\vsf - \csf} , 
\end{align*}
thereby improving the bound by a factor $\ell\tau$.
\end{proof}

\begin{lemma}\label{lem:vcex}
For $\alpha,\beta\in \SAW$, let $\esf$, $\vsf$, and $\csf$ be defined in \eqref{eq:def-evc}.
We have 
$$
\abs{V(\alpha\cup \beta)} \leq \vsf -\frac{1}{2}\esf - \csf + \ell + 2 .
$$
\end{lemma}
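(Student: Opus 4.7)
The plan is to prove the inequality via a monotonicity argument for the cyclomatic number $r(G) := |E(G)| - |V(G)| + c(G)$, where $c(G)$ denotes the number of connected components of $G$ (and $V(G)$ is the non-isolated vertex set, consistent with the paper's convention).

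First, I compute $r(\alpha \triangle \beta)$ directly from the definition:
\[ r(\alpha \triangle \beta) = \esf - \vsf + \csf. \]

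Second, I establish the key monotonicity lemma: if $H' = H \cup \{e\}$ for an edge $e$, then $r(H') \ge r(H)$. This is a short case analysis on whether each endpoint of $e$ lies in $V(H)$ and whether both endpoints lie in the same component of $H$:
(i) both endpoints in $V(H)$, same component: $|E|$ increases by $1$, $|V|$ and $c$ unchanged, so $r$ increases by $1$;
(ii) both endpoints in $V(H)$, different components: $|E|$ increases by $1$, $c$ decreases by $1$, so $r$ is unchanged;
(iii) exactly one endpoint new: $|E|$ and $|V|$ both increase by $1$, $c$ unchanged, $r$ unchanged;
(iv) both endpoints new: $|E|$ and $c$ increase by $1$, $|V|$ increases by $2$, $r$ unchanged.

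Third, observe that $\alpha \cup \beta = (\alpha \triangle \beta) \sqcup (\alpha \cap \beta)$ as edge sets, so adding the edges of $\alpha \cap \beta$ one at a time to $\alpha \triangle \beta$ yields $\alpha \cup \beta$. The monotonicity lemma gives
\[ r(\alpha \cup \beta) \ge r(\alpha \triangle \beta) = \esf - \vsf + \csf. \]

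Fourth, I compute $r(\alpha \cup \beta)$ explicitly. Since $\alpha$ and $\beta$ are connected (each is a path) and share vertex $1$, their union is connected, so $c(\alpha \cup \beta) = 1$. By inclusion-exclusion, $|\alpha \cap \beta| = \tfrac{1}{2}(|\alpha| + |\beta| - |\alpha \triangle \beta|) = \ell + 1 - \esf/2$, and therefore $|E(\alpha \cup \beta)| = |\alpha| + |\beta| - |\alpha \cap \beta| = \ell + 1 + \esf/2$. Substituting,
\[ r(\alpha \cup \beta) = \ell + 2 + \esf/2 - |V(\alpha \cup \beta)|. \]
Combining with $r(\alpha \cup \beta) \ge \esf - \vsf + \csf$ and rearranging yields $|V(\alpha \cup \beta)| \le \vsf - \esf/2 - \csf + \ell + 2$, as claimed.

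There is no genuine obstacle here: the argument is purely structural. The only step that deserves a careful write-up is the edge-addition monotonicity of $r$, but this is routine provided one is careful about the convention that $V(\cdot)$ excludes isolated vertices. Everything else is bookkeeping.
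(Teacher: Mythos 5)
Your proof is correct. It is close in spirit to the paper's own argument---both start from $\alpha\triangle\beta$ and add the edges of $\alpha\cap\beta$ one at a time---but the bookkeeping is organized differently. The paper chooses a specific order of edge additions (always picking an edge of $\alpha\cap\beta$ with at least one endpoint already in the current graph, which is possible because $\alpha\cup\beta$ is connected), and then directly counts how often $|V(K)|$ grows versus how often the component count must drop. You instead package the same combinatorics into the monotonicity of the cyclomatic number $r(G) = |E(G)| - |V(G)| + c(G)$ under arbitrary edge additions; because your case analysis (i)--(iv) covers every way an edge can attach, no ordering argument is needed, and the connectedness of $\alpha\cup\beta$ enters only through $c(\alpha\cup\beta)=1$ in the final computation. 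What the cyclomatic-number route buys is a standard, order-free invariant and a slightly shorter write-up; what the paper's route buys is avoiding case (iv) entirely (adding an edge with both endpoints new never happens in their chosen order), which sidesteps the small subtlety---that you correctly flag---of the $V(\cdot)$ convention excluding isolated vertices, under which $c(\varnothing)=0$ and $r(\varnothing)=0$. Both are valid and yield the same bound.
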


\begin{proof}
Note that the graph $\alpha \cup \beta$ is the disjoint union of $\alpha \triangle \beta$ and $\alpha \cap \beta$. 
To bound the number of vertices of $\alpha \cup \beta$, we start from the graph $K = \alpha \triangle \beta$, which has $\vsf$ vertices, and then sequentially add vertices and edges of $\alpha \cap \beta$ to $K$ until we eventually reach $K = \alpha\cup \beta$.
Hence, $|V(\alpha \cup \beta)|$ will be bounded by $\vsf$ plus the total number of vertices we add in this procedure.

To be more precise, at each step, we choose an edge $(i,j)$ of $\alpha \cap \beta$ that has not yet been added to the current $K$, such that $i \in V(K)$.
Such an edge exists because the eventual graph $\alpha \cup \beta$ is connected.
Now we add $(i,j)$ to $K$, and there are two cases: $j \in V(K)$ or $j \notin V(K)$.
If $j \in V(K)$, then $|V(K)|$ does not increase; if $j \notin V(K)$, then $|V(K)|$ increases by $1$.
Moreover, the number of connected components of $K$ may decrease by $1$ if $j \in V(K)$ (when $(i,j)$ connects two components); the number of connected components of $K$ will not decrease if $j \notin V(K)$.
Since the number of connected components of $K$ decreases from $\csf$ to $1$ in the entire procedure, the first case must occur at least $\csf - 1$ times, so $|V(K)|$ does not increase in at least $\csf - 1$ steps.
Since there are $|\alpha \cap \beta|$ steps of adding an edge in total, the number of vertices added is at most $|\alpha \cap \beta| - \csf + 1$.
Therefore, we obtain
\begin{equation*}
\abs{V(\alpha \cup \beta)} \leq \vsf + \abs{\alpha \cap \beta} -\csf+1.
\end{equation*}
To complete the proof, it suffices to recall that $|\alpha| = |\beta| = \ell + 1$ so that $2\abs{\alpha\cap \beta}+\esf = 2\ell +2$. 
\end{proof}

\begin{lemma}
\label{lem:v-2-ell}
For $\alpha,\beta\in \SAW$, let $\vsf$ be defined in \eqref{eq:def-evc}.
Suppose $\alpha \cap \beta \ne \varnothing$.
Then we have $\vsf \le 2 \ell$.
\end{lemma}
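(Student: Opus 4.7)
The plan is to perform a case analysis on $k := |V(\alpha) \cap V(\beta)|$. Since $\alpha$ and $\beta$ are self-avoiding walks of length $\ell+1$ from vertex $1$ to vertex $2$, each has exactly $\ell + 2$ vertices and $\{1,2\} \subseteq V(\alpha) \cap V(\beta)$, giving $k \ge 2$ and $|V(\alpha \cup \beta)| = 2\ell + 4 - k$ by inclusion-exclusion. Letting $S$ denote the set of vertices in $V(\alpha \cup \beta)$ not incident to any edge of $\alpha \triangle \beta$, we have $\vsf = 2\ell + 4 - k - |S|$, so the target bound $\vsf \le 2\ell$ reduces to showing $|S| \ge 4 - k$.

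First I would observe that $S \subseteq V(\alpha) \cap V(\beta)$: any $v \in V(\alpha) \setminus V(\beta)$ has all of its $\alpha \cup \beta$-incident edges lying only in $\alpha$, and therefore each such edge belongs to $\alpha \setminus \beta \subseteq \alpha \triangle \beta$, placing $v$ in $V(\alpha \triangle \beta)$. By symmetry the same holds for $V(\beta) \setminus V(\alpha)$. So every vertex excluded from $V(\alpha \triangle \beta)$ must be common to both walks, and moreover a shared edge in $\alpha \cap \beta$ must have both endpoints in $V(\alpha) \cap V(\beta)$.

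I would then split into three cases. If $k \ge 4$, the bound is immediate from $|V(\alpha \cup \beta)| = 2\ell + 4 - k \le 2\ell$. If $k = 2$, any shared edge would have both endpoints in $\{1,2\}$ and hence would be the single edge $(1,2)$, but this edge does not appear in a self-avoiding walk of length $\ell + 1 \ge 2$ from $1$ to $2$; this contradicts $\alpha \cap \beta \ne \varnothing$, so the case is vacuous. If $k = 3$, let $w$ denote the unique shared internal vertex, so that the only possible shared edges are $(1,w)$ and $(w,2)$. If $(1,w) \in \alpha \cap \beta$, then since vertex $1$ has degree exactly one in each of the walks $\alpha$ and $\beta$, the edge $(1,w)$ is the unique $\alpha \cup \beta$-edge incident to $1$, and it lies in $\alpha \cap \beta$; hence $1 \in S$. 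Symmetrically, $(w,2) \in \alpha \cap \beta$ forces $2 \in S$. In either subcase $|S| \ge 1 = 4 - k$.

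There is no substantive technical obstacle; the only conceptual point is to exploit the degree-one boundary structure at vertices $1$ and $2$, which both rules out the $k = 2$ situation and exhibits an element of $S$ in the $k = 3$ situation.
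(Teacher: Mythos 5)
Your proof is correct and takes essentially the same route as the paper's: both hinge on the observation that a shared edge incident to the degree-one endpoint $1$ (or $2$) forces that vertex out of $V(\alpha\triangle\beta)$, combined with the inclusion-exclusion bound on $|V(\alpha\cup\beta)|$. You merely organize the case split by $k=|V(\alpha)\cap V(\beta)|$ rather than by the incidence pattern of a single chosen shared edge as the paper does, which is a cosmetic difference.
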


\begin{proof}
Let $(i,j) \in \alpha \cap \beta$ where $i<j$.
If $i = 1$, then $1 \notin V(\alpha \triangle \beta)$ and $|V(\alpha \cup \beta)| \le 2 \ell + 1$.
We see that $\vsf \le |V(\alpha \cup \beta)| - 1 \le 2 \ell$.
The case $j = 2$ is similar.
In other cases where $i \ne 1$ and $j \ne 2$, we have $\vsf \le |V(\alpha \cup \beta)| \le 2 \ell$.
\end{proof}

\begin{lemma}\label{lem:vce-excess}
For $\alpha,\beta\in \SAW$, let $\esf$, $\vsf$, and $\csf$ be defined in \eqref{eq:def-evc}.
Suppose $\alpha \triangle \beta \ne \varnothing$.
Then the graph $\alpha \triangle \beta$ does not contain any dangling edge, i.e., an edge $(i,j)$ such that vertex $j$ is connected to only vertex $i$ in $\alpha \triangle \beta$.
As a result, we have 
$\csf + \esf - \vsf \ge 1.$
\end{lemma}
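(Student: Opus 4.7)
The plan is to establish a parity statement about vertex degrees in $\alpha \triangle \beta$, from which both conclusions follow quickly. The key observation is that each of $\alpha, \beta \in \SAW$ is a self-avoiding walk from vertex $1$ to vertex $2$, so in the graph $\alpha$ the endpoints $1$ and $2$ each have degree exactly $1$ while every other vertex of $\alpha$ has degree exactly $2$; the same holds for $\beta$.

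First I would record the identity
\begin{equation*}
\deg_{\alpha \triangle \beta}(v) = \deg_\alpha(v) + \deg_\beta(v) - 2 \deg_{\alpha \cap \beta}(v),
\end{equation*}
which follows from inclusion-exclusion applied to the edges incident to $v$. Then I would check case by case: for $v \notin \{1,2\}$ we have $\deg_\alpha(v), \deg_\beta(v) \in \{0,2\}$, and for $v \in \{1,2\}$ we have $\deg_\alpha(v) = \deg_\beta(v) = 1$. Either way, the right-hand side is even, so no vertex of $\alpha \triangle \beta$ has degree exactly $1$. This already rules out dangling edges, establishing the first part of the lemma.

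For the inequality $\csf + \esf - \vsf \ge 1$, I would exploit the fact that $V(\alpha \triangle \beta)$ by definition contains only non-isolated vertices, so every vertex in $V(\alpha \triangle \beta)$ has degree at least $1$; combined with the evenness established above, every such vertex has degree at least $2$. Consequently every connected component $C$ of $\alpha \triangle \beta$ has minimum degree $\ge 2$ and therefore contains a cycle, which forces $|E(C)| \ge |V(C)|$. Summing over all $\csf \ge 1$ components yields $\esf \ge \vsf$, so
\begin{equation*}
\csf + \esf - \vsf \ge \csf \ge 1.
\end{equation*}

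I do not anticipate a serious obstacle; the main thing to verify carefully is the parity of degrees at the two special vertices $1$ and $2$, since they have odd degree in $\alpha$ and in $\beta$ individually and it is exactly the coincidence of the endpoints of the two walks that makes these contributions cancel modulo $2$ in $\alpha \triangle \beta$. Without this coincidence---if, say, $\alpha$ and $\beta$ were self-avoiding walks between different pairs of endpoints---the argument would break, so the structure of $\SAW$ is genuinely used.
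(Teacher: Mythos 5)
Your proof is correct and rests on the same key observation as the paper's: that every vertex of $\alpha \triangle \beta$ has even degree because the two self-avoiding walks $\alpha$ and $\beta$ share the endpoints $1$ and $2$. The paper phrases this by viewing $\alpha \cup \beta$ as an Eulerian multigraph and invokes the excess/forest characterization, whereas you use the degree inclusion--exclusion identity and deduce $\esf \ge \vsf$ component-by-component; these are minor repackagings of the same argument.
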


\begin{proof}
The quantity $\csf + \esf - \vsf$ is known as the excess of the graph $\alpha \triangle \beta$; it is always nonnegative and is zero only if $\alpha \triangle \beta$ is a forest.
Since a forest obviously contains a dangling edge, it remains to prove that $\alpha \triangle \beta$ does not contain a dangling edge. 

To see this, it is convenient to view $\alpha \cup \beta$ as a multigraph, which has even degree at each vertex.
Further, to obtain $\alpha \triangle \beta$ from $\alpha \cup \beta$, we delete all the double edges in $\alpha \cap \beta$, so $\alpha \triangle \beta$ also has even degree at each vertex.
As a result, $\alpha \triangle \beta$ does not contain a dangling edge.
\end{proof}

\begin{proposition}    \label{prop:recovery-variance}
There is a constant $\constVar > 0$ that depends only on $\ell$ such that 
$$
\Var(T\cond z_1, z_2) \leq \constVar \, 
\bigg[ n^\ell \left(\frac{p}{q}\right)^{\ell+1} + n^{2\ell - 1} \tau^{2 \ell - 2} \lambda^{2 \ell} \, \frac{p}{q} + n^{\ell + \frac 12} \tau \lambda^3 \left(\frac{p}{q}\right)^{\ell - \frac 12} \bigg] .
$$
Moreover, if $\dist(z_1,z_2) > \frac{(\ell+1) \tau}{2}$, then
$$
\Var(T\cond z_1, z_2) \leq \constVar \, 
\bigg[ n^\ell \left(\frac{p}{q}\right)^{\ell+1} + n^{2\ell - 1} \tau^{2 \ell - 1} \lambda^{2 \ell} \, \frac{p}{q} + n^{\ell + \frac 12} \tau^2 \lambda^3 \left(\frac{p}{q}\right)^{\ell - \frac 12} \bigg] .
$$
\end{proposition}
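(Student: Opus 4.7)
The proof is a careful second-moment calculation built on Lemma~\ref{lem:variance summand bound} and Lemma~\ref{lem:variance summand bound ver 2}. Starting from $T=\sum_{\alpha\in\SAW}\tilde A_\alpha$ and using $\tilde A_\alpha \tilde A_\beta = \tilde A_{\alpha\cap\beta}^2\,\tilde A_{\alpha\triangle\beta}$, I would expand
\begin{equation*}
\Var(T\cond z_1,z_2) = \sum_{\alpha,\beta\in\SAW}\Big(\E\left[\tilde A_{\alpha\cap\beta}^2\tilde A_{\alpha\triangle\beta}\cond z_1,z_2\right] - \E[\tilde A_\alpha\cond z_1,z_2]\,\E[\tilde A_\beta\cond z_1,z_2]\Big).
\end{equation*}
The first observation is that whenever $V(\alpha)\cap V(\beta)=\{1,2\}$, the latent variables entering $\tilde A_\alpha$ and those entering $\tilde A_\beta$ are independent conditional on $z_1,z_2$, so the summand vanishes. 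For the remaining pairs I would drop the (nonnegative) subtracted term and upper bound the summand by $\E[\tilde A_{\alpha\cap\beta}^2\tilde A_{\alpha\triangle\beta}\cond z_1,z_2]$.

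The diagonal $\alpha=\beta$ is handled by the first case of Lemma~\ref{lem:variance summand bound}, contributing at most $|\SAW|(p/q)^{\ell+1}\le n^\ell (p/q)^{\ell+1}$, which matches the first term of the proposition. For the off-diagonal pairs I would group by the triple $(\esf,\vsf,\csf)$ from \eqref{eq:def-evc} and apply the second case of Lemma~\ref{lem:variance summand bound} to bound each summand by $(p/q)^{\ell+1-\esf/2}\lambda^\esf(\ell\tau)^{\vsf-\csf-1}$. By Lemma~\ref{lem:vcex}, the pair uses at most $\vsf-\esf/2-\csf+\ell+2$ vertices in total; since $1$ and $2$ are fixed, there are at most $n^{\vsf-\esf/2-\csf+\ell}$ ways to choose the remaining vertex labels, and the number of ways to turn such a labeled vertex set into an ordered pair in $\SAW\times\SAW$ with the prescribed intersection and symmetric-difference type is a constant $C_\ell$ depending only on $\ell$. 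The total off-diagonal contribution is therefore at most
\begin{equation*}
C_\ell \sum_{(\esf,\vsf,\csf)} n^{\vsf-\esf/2-\csf+\ell}\,(p/q)^{\ell+1-\esf/2}\,\lambda^\esf\,(\ell\tau)^{\vsf-\csf-1}.
\end{equation*}

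The combinatorial optimization of this sum is the technical heart of the argument, and where I expect the main obstacle to lie. Setting $k:=\vsf-\csf-1$, the excess inequality $\csf+\esf-\vsf\ge 1$ of Lemma~\ref{lem:vce-excess} forces $0\le k\le \esf-2$, and the summand becomes $n^{\ell+1+k-\esf/2}(p/q)^{\ell+1-\esf/2}\lambda^\esf(\ell\tau)^k$. For each fixed $\esf$, the $n$-power is maximized at $k=\esf-2$; setting further $\esf=2\ell$ (its maximum value in view of the bound $\vsf\le 2\ell$ from Lemma~\ref{lem:v-2-ell}) yields the second term $n^{2\ell-1}\tau^{2\ell-2}\lambda^{2\ell}(p/q)$. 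The other extremal case is a triangle in $\alpha\triangle\beta$, i.e.\ $(\esf,\vsf,\csf)=(3,3,1)$, which contributes $n^{\ell+1/2}\tau\lambda^3(p/q)^{\ell-1/2}$ and produces the third term. The bulk of the remaining work is to verify that every other admissible $(\esf,\vsf,\csf)$ is dominated, up to $\ell$-dependent constants, by one of these two extremal choices; this should follow from separate monotonicity checks in $k$ and $\esf$ together with the excess constraint, although the bookkeeping across all cases is delicate. Finally, the sharper bound under $\dist(z_1,z_2)>(\ell+1)\tau/2$ follows by replacing Lemma~\ref{lem:variance summand bound} with Lemma~\ref{lem:variance summand bound ver 2} in the off-diagonal analysis, which multiplies every off-diagonal summand by an extra $\ell\tau$ and thereby upgrades the two off-diagonal terms to $n^{2\ell-1}\tau^{2\ell-1}\lambda^{2\ell}(p/q)$ and $n^{\ell+1/2}\tau^2\lambda^3(p/q)^{\ell-1/2}$ respectively, while leaving the diagonal term unchanged.
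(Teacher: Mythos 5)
Your proposal follows essentially the same route as the paper's proof: condition on $z_1,z_2$, expand the variance over pairs $(\alpha,\beta)\in\SAW\times\SAW$, kill the pairs with $V(\alpha)\cap V(\beta)=\{1,2\}$ by conditional independence, drop the (nonnegative) product of expectations for the remaining pairs, write $\tilde A_\alpha\tilde A_\beta=\tilde A^2_{\alpha\cap\beta}\tilde A_{\alpha\triangle\beta}$, and invoke Lemma~\ref{lem:variance summand bound} (resp.\ Lemma~\ref{lem:variance summand bound ver 2}), Lemma~\ref{lem:vcex}, Lemma~\ref{lem:v-2-ell}, and Lemma~\ref{lem:vce-excess}. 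The combinatorial count, the diagonal term, and the two off-diagonal extremal contributions are all identified correctly, as is the mechanism by which the $\dist(z_1,z_2)>\frac{(\ell+1)\tau}{2}$ bound gains an extra $\tau$.

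The one place where your argument is only sketched and could actually go wrong as written is the optimization over admissible triples $(\esf,\vsf,\csf)$. You claim the summand is ``maximized at $k=\esf-2$'' and then at $\esf=2\ell$, but the monotonicity in $k$ and in $\esf$ each flips sign depending on whether $n\ell\tau$ and $n\tau^2\lambda^2 q/p$ exceed $1$, and the proposition must hold across all parameter regimes. The paper avoids case-splitting by refactoring the generic off-diagonal summand, after absorbing $\ell$-dependent constants, as
\[
n^\ell\Big(\tfrac{p}{q}\Big)^{\ell+1}\cdot \frac{1}{\tau}\cdot \Big(\tfrac{n\tau^2\lambda^2 q}{p}\Big)^{\frac{1}{2}(\vsf-\csf)}\Big(\tfrac{\lambda^2 q}{np}\Big)^{\frac{1}{2}(\csf+\esf-\vsf)},
\]
then using $\lambda^2 q/(np)\le 1$ with the excess bound $\csf+\esf-\vsf\ge 1$ to set the last exponent to $\tfrac12$, and using $2\le \vsf-\csf\le 2\ell-1$ with monotonicity of $x\mapsto a^x$ to reduce to the two endpoint values of $\vsf-\csf$. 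This one-line rearrangement turns your ``delicate bookkeeping'' into a clean two-factor monotonicity argument, regardless of the sign of $\log(n\tau^2\lambda^2 q/p)$. A smaller slip: you allow $k=\vsf-\csf-1\ge 0$, but Lemma~\ref{lem:vce-excess} (no dangling edges, so every component has at least three vertices) gives $\vsf-\csf\ge 2$, i.e.\ $k\ge 1$; this matters, since $k=0$ would produce an undominated term $n^\ell(p/q)^\ell\lambda^2$ in some regimes. Your two chosen extremal cases happen to coincide with the paper's endpoints, so the conclusion is right, but the proposal as stated does not yet establish that all other admissible triples are dominated.
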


\begin{proof}
Throughout this proof, we condition on $z_1$ and $z_2$, and the notations $\E$, $\p$, and $\Var$ are all with respect to the conditional probability. 
By \eqref{eq:def-stat-t}, we have
\begin{equation}
\Var(T)
= \E[T^2]  - (\E T)^2 
= \sum_{\alpha,\beta\in \SAW} \E [\tilde{A}_\alpha \tilde{A}_\beta] - \E[\tilde{A}_\alpha] \cdot \E[\tilde{A}_\beta] .
\label{eq:var-first-eq}
\end{equation}
Note that $\ell + 2 \le |V(\alpha \cup \beta)| \le 2 \ell + 2$.

Let us first consider the extreme case $|V(\alpha \cup \beta)| = 2 \ell + 2$, where the two walks $\alpha$ and $\beta$ have disjoint edge sets and only common vertices $1$ and $2$.
By the independence of $(\tilde A_{ij})_{(i,j) \in \alpha \cup \beta}$ conditional on $(z_i)_{i \in V(\alpha \cup \beta)}$ and the first statement of Lemma~\ref{lem:centered Y property}, we have 
\begin{align*}
\E [\tilde{A}_\alpha \tilde{A}_\beta]
&= \E \bigg[ \prod_{(i,j) \in \alpha \cup \beta} \E[ \tilde A_{ij} \mid (z_i)_{i \in V(\alpha \cup \beta)} ] \bigg] \\
&= \lambda^{2 \ell + 2} \, \p\{ \dist(z_i, z_j) \le \tau/2 \text{ for all } (i,j) \in \alpha \cup \beta \} .
\end{align*}
Similarly,
$$
\E[\tilde{A}_\alpha] \cdot \E[\tilde{A}_\beta]
= \lambda^{2 \ell + 2} \, \p\{ \dist(z_i, z_j) \le \tau/2 \text{ for all } (i,j) \in \alpha \} \cdot \p\{ \dist(z_i, z_j) \le \tau/2 \text{ for all } (i,j) \in \beta \} .
$$
Recall that we already condition on $z_1$ and $z_2$, and the variables $\{z_i : i \in V(\alpha), \, i \ne 1,2\}$ and $\{z_i : i \in V(\beta), \, i \ne 1,2\}$ are independent.
Hence, the above two displays are equal, i.e.,
$$
\E [\tilde{A}_\alpha \tilde{A}_\beta]
- \E[\tilde{A}_\alpha] \cdot \E[\tilde{A}_\beta] = 0.
$$

In all other case where $\ell + 2 \le |V(\alpha \cup \beta)| \le 2 \ell + 1$, we have that $\alpha \cap \beta \ne \varnothing$.
The first statement of Lemma~\ref{lem:centered Y property} implies $\E[\tilde A_\alpha] , \E[\tilde A_\beta] \ge 0$.
Therefore, we conclude from \eqref{eq:var-first-eq} that
$$
\Var(T)
\le \sum_{v=\ell+2}^{2\ell+1} \sum_{\substack{\alpha,\beta\in\SAW,\\ |V(\alpha\cup \beta)| = \vunion}} \E [\tilde{A}_\alpha \tilde{A}_\beta] .
$$
If $|V(\alpha\cup \beta)| = \vunion$, there are $\binom{n-2}{v-2} \le n^{v-2}$ choices of the vertices in $V(\alpha \cup \beta) \setminus \{1,2\}$.
With the vertices of $\alpha \cup \beta$ fixed, the number of possible graphs $\alpha \cup \beta$ is bounded by a constant $C_1 = C_1(\ell) > 0$.
Moreover, we can write $\tilde{A}_\alpha \tilde{A}_\beta = \tilde{A}^2_{\alpha\cap \beta} \tilde{A}_{\alpha\triangle \beta}$.
It follows that
\begin{align*}
\Var(T) 
\le C_1 \sum_{v={\ell+2}}^{2\ell+1} n^{\vunion-2} \max_{\substack{\alpha,\beta \in \SAW,\\ \abs{V(\alpha\cup \beta)}=v}} \E [\tilde{A}^2_{\alpha\cap \beta} \tilde{A}_{\alpha\triangle \beta}] .
\end{align*}

Let $\esf$, $\vsf$, and $\csf$ 
be defined in \eqref{eq:def-evc}.
If $\alpha \triangle \beta = \varnothing$, then $|V(\alpha \cup \beta)| = \ell+2$; if $\alpha \triangle \beta \ne \varnothing$, then $|V(\alpha \cup \beta)| \ge \ell+3$.
Applying Lemmas~\ref{lem:variance summand bound} and~\ref{lem:vcex} together with the above bound on $\Var(T)$, we obtain
\begin{align*}
\Var(T) 
&\le C_1 \bigg[ n^\ell (p/q)^{\ell+1} + \sum_{v={\ell+3}}^{2\ell+1} \max_{\substack{\alpha,\beta \in \SAW,\\ \abs{V(\alpha\cup \beta)}=v}}  n^{\vsf - \esf/2 - \csf + \ell} (p/q)^{\ell + 1 - \esf/2} \, \lambda^{\esf} (\ell\tau)^{\vsf-\csf-1} \bigg] \\
& \le C_1 \bigg[ n^\ell (p/q)^{\ell+1} +  \ell  \max_{\substack{\alpha,\beta \in \SAW , \\ \ell + 3 \le |V(\alpha \cup \beta)| \le 2 \ell + 1}}  n^{\vsf - \esf/2 - \csf + \ell} (p/q)^{\ell + 1 - \esf/2} \, \lambda^{\esf} (\ell\tau)^{\vsf-\csf-1} \bigg] .
\end{align*}
It follows that, for a sufficiently large constant $C_2 = C_2(\ell) > 0$,
\begin{align}
\Var(T) 
\le C_2 \, n^\ell \left(\frac{p}{q}\right)^{\ell+1}  \Bigg[ 1 + \frac{1}{\tau} \max_{\substack{\alpha,\beta \in \SAW , \\ \ell + 3 \le |V(\alpha \cup \beta)| \le 2 \ell + 1}} \left( \frac{n\tau^2 \lambda^2 q}{p} \right)^{\frac{1}{2}(\vsf-\csf)} \pr{\frac{\lambda^2 q}{n p}}^{\frac{1}{2}(\csf+\esf-\vsf)} \Bigg] .
\label{eq:var-intermediate-bound}
\end{align}
To further control the above maximum, we consider the two factors:
\begin{itemize}
\item
Since $\lambda = \frac{p-q}{\sqrt{q(1-q)}}$, we have $\frac{\lambda^2 q}{n p} = \frac{(p-q)^2}{n p (1-q)} \le 1$.
In addition, $\csf + \esf - \vsf \ge 1$ by Lemma~\ref{lem:vce-excess}.
Hence, it holds that
$\pr{\frac{\lambda^2 q}{n p}}^{\frac{1}{2}(\csf+\esf-\vsf)} \le \pr{\frac{\lambda^2 q}{n p}}^{\frac{1}{2}}$.

\item
By Lemma~\ref{lem:v-2-ell} and $\csf \ge 1$, we have $\vsf - \csf \le 2 \ell - 1$.
By Lemma~\ref{lem:vce-excess}, $\alpha \triangle \beta$ does not contain any dangling edge, so every connected component of it has at least three vertices, and thus $\vsf - \csf \ge 2$.
It follows that
$\left( \frac{n\tau^2 \lambda^2 q}{p} \right)^{\frac{1}{2}(\vsf-\csf)} \le \left( \frac{n\tau^2 \lambda^2 q}{p} \right)^{\ell - \frac 12} \lor \left( \frac{n\tau^2 \lambda^2 q}{p} \right)$.
\end{itemize}
Combining these facts with the above bound on $\Var(T)$, we see that
\begin{align*}
\Var(T) 
&\le C_4 \, n^\ell \left(\frac{p}{q}\right)^{\ell+1}  \bigg[ 1 + \frac{1}{\tau} \left( \frac{n\tau^2 \lambda^2 q}{p} \right)^{\ell - \frac 12} \pr{\frac{\lambda^2 q}{n p}}^{\frac{1}{2}} + \frac{1}{\tau} \left( \frac{n\tau^2 \lambda^2 q}{p} \right) \pr{\frac{\lambda^2 q}{n p}}^{\frac{1}{2}} \bigg] \\
&\le C_4 \, \bigg[ n^\ell \left(\frac{p}{q}\right)^{\ell+1} + n^{2\ell - 1} \tau^{2 \ell - 2} \lambda^{2 \ell} \, \frac{p}{q} + n^{\ell + \frac 12} \tau \lambda^3 \left(\frac{p}{q}\right)^{\ell - \frac 12} \bigg] 
\end{align*}
for a constant $C_4 = C_4(\ell) > 0$.

Finally, if $\dist(z_1,z_2) > \frac{(\ell+1) \tau}{2}$, then the application of Lemma~\ref{lem:variance summand bound} can be replaced by Lemma~\ref{lem:variance summand bound ver 2} in the above proof, so that we gain a factor $\tau$ in the case $\alpha \triangle \beta \ne \varnothing$.
As a result, we do not have the factor $1/\tau$ before the max in \eqref{eq:var-intermediate-bound}.
Consequently, we gain a factor $\tau$ in the second and the third term of the eventual bound.
\end{proof}

We now prove Theorems~\ref{thm:recover-upper} and~\ref{thm:exact-recovery}.

\begin{proof}[Proof of Theorem~\ref{thm:recover-upper}]
For brevity, write $T = T(A)$.
We need to show that $\frac{\E[T\cdot\chi]}{\sqrt{\E[T^2]}} = \omega(\tau)$.
Without loss of generality, we may condition on $z_1 = 0$ throughout the proof, because the distribution of $A$ does not change if we condition on any realization of $z_1$.
Let $\E$ and $\p$ be the expectation and the probability with respect to the conditional distribution respectively.

Using $\chi = \bbone\{\dist(0,z_2) \le \tau/2\}$ and Proposition~\ref{prop:recovery-expectation}, we obtain
\begin{align*}
\E\kr{T \chi} = \int_{-\tau/2}^{\tau/2} \E\kr{T\cond z_2} \, \rmd z_2
\ge \tau \, \E\kr{ T \cond z_2 = \tau/2}
= \tau \binom{n-2}{\ell} \tau^{\ell} \lambda^{\ell+1} C_1 
\ge c_2 \, n^\ell \tau^{\ell+1} \lambda^{\ell+1} ,
\end{align*}
where $C_1 = C_1(\ell) = \int_{\frac{\ell}{2}}^{\frac{\ell}{2}+1}f_\ell(t)\rmd t$ with $\PDFIH_\ell$ defined in \eqref{eq:def-ih-pdf}, and $c_2 = c_2(\ell) > 0$.

Next, we have
$$
\E[T^2] = \E\kr{ \E[T^2 \mid z_2] }
= \E\kr{\Var(T\cond z_2)} + \E\kr{(\E[T\cond z_2])^2} .
$$
By Proposition~\ref{prop:recovery-expectation} again, 
\begin{align*}
\E\kr{(\E[T\cond z_2])^2}
&=\int_{-\frac{(\ell+1)\tau}{2}}^{\frac{(\ell+1)\tau}{2}} (\E[T \mid z_2])^2 \, \rmd z_2 \\
&\le (\ell+1) \tau \, (\E[T \mid z_2 = 0])^2 \\
&= (\ell+1) \tau \bigg[ \binom{n-2}{\ell} \tau^{\ell} \lambda^{\ell+1} C_3 \bigg]^2 \\
&\le C_4 \, n^{2\ell} \tau^{2\ell + 1} \lambda^{2\ell + 2} ,
\end{align*}
where $C_3 = C_3(\ell) = \int_{\frac{\ell}{2}-\frac{1}{2}}^{\frac{\ell}{2}+\frac{1}{2}}f_\ell(t)\rmd t$, and $C_4 = C_4(\ell) > 0$.
Moreover, recall that $p$ and $q$ are of the same order by assumption, so for any realization of $z_2$, Proposition~\ref{prop:recovery-variance} gives
$$
\Var(T \mid z_2)
\le C_5 \left( n^\ell + n^{2\ell - 1} \tau^{2 \ell - 2} \lambda^{2 \ell} + n^{\ell + 1/2} \tau \lambda^3 \right)
$$
for a constant $C_5 = C_5(\ell) > 0$.
Therefore, for a constant $C_6 = C_6(\ell) > 0$, 
\begin{align*}
\sqrt{\E[T^2]}
\le C_6 \left( n^\ell \tau^{\ell + 1/2} \lambda^{\ell+1} + n^{\ell/2} + n^{\ell - 1/2} \tau^{\ell - 1} \lambda^{\ell} + n^{\ell/2 + 1/4} \tau^{1/2} \lambda^{3/2} \right) .
\end{align*}

Combining the above bounds on $\E[T\chi]$ and $\sqrt{\E[T^2]}$, we conclude that
\begin{align*}
\frac{\E[T\cdot\chi]}{\sqrt{\E[T^2]}}
&\ge \frac{c_2}{C_6} \cdot \frac{ n^{\ell} \tau^{\ell+1} \lambda^{\ell+1} }{ n^\ell \tau^{\ell + 1/2} \lambda^{\ell+1} + n^{\ell/2} + n^{\ell - 1/2} \tau^{\ell - 1} \lambda^{\ell} + n^{\ell/2 + 1/4} \tau^{1/2} \lambda^{3/2} } \\
&\ge c_7 \, \min\left\{ \tau^{1/2} , n^{\ell/2} \tau^{\ell+1} \lambda^{\ell+1} , n^{1/2} \tau^{2} \lambda , n^{\ell/2-1/4} \tau^{\ell + 1/2} \lambda^{\ell - 1/2} \right\} .
\end{align*}
For this bound to be of order $\omega(\tau)$, it suffices to have
$$
\tau = o(1), \qquad
n \tau^{2} \lambda^{2 + 2/\ell} = \omega(1), \qquad 
n \tau^2 \lambda^2 = \omega(1).
$$
Since $C q \le p \le C' q$ for $C' > C > 1$, we have $\lambda = \frac{p-q}{\sqrt{q(1-q)}} = \Theta(p^{1/2})$.
Therefore, the above conditions all hold by the assumptions $\ell > 1/\delta$ and \eqref{eq:recover-upper-cond}.
\end{proof}

\begin{proof}[Proof of Theorem~\ref{thm:exact-recovery}]
Similar to the proof of Theorem~\ref{thm:recover-upper}, we write $T = T(A)$ and condition on $z_1 = 0$ throughout the proof.
We start by rewriting the expectation as the sum of type I and type II errors:
    \begin{equation*}
        \E\kr{(\hat \chi - \chi)^2} = \Pb\br{\hat\chi \neq \chi} = \Pb\br{\chi=1,\hat\chi=0}+\Pb\br{\chi=0,\hat\chi=1}.
    \end{equation*}
Since $\chi = \bbone\{-\tau/2 \le z_2 \le \tau/2\}$ and $\hat \chi = \bbone\{ T < \kappa \}$, we have
    \begin{align*}
        \Pb\br{\chi=1,\hat\chi=0} &= \int_{-\frac{\tau}{2}}^{\frac{\tau}{2}} \Pb\br{T<\kappa\cond z_2} \, \rmd z_2, \\
        \Pb\br{\chi=0,\hat\chi=1} 
        &= \int_{\frac{\tau}{2} \le |z_2| \le 1} \Pb\br{T\geq \kappa\cond z_2} \, \rmd z_2 \\
        &\le 2 \eps + \int_{\frac{\tau}{2} + \eps \le |z_2| \le \frac{(\ell+1) \tau}{2}} \Pb\br{T\geq \kappa\cond z_2} \, \rmd z_2 + \int_{\frac{(\ell+1) \tau}{2} < |z_2| \le 1} \Pb\br{T\geq \kappa\cond z_2} \, \rmd z_2.
    \end{align*}
It remains to bound the above three integrals:
\begin{itemize}
    \item 
Consider $z_2 \in [-\tau/2, \tau/2]$.
Let $\Delta(\eps)$ be defined in \eqref{eq:def-delta-epsilon} and $\kappa$ be defined in \eqref{eq:def-kappa}.
By Proposition~\ref{prop:recovery-expectation} and Chebyshev's inequality, 
$$
\Pb\br{T< \kappa \cond z_2}
\le \Pb\br{\abs{T-\E[T\cond z_2]}> \frac{\Delta(\epsilon)}{2} \; \Big| \; z_2}
\le \frac{4 \Var(T \mid z_2)}{\Delta(\eps)^2} .
$$

Lemma~\ref{lem:separation lemma} and Proposition~\ref{prop:recovery-variance} together imply that
\begin{align*}
\frac{\Var(T \mid z_2)}{\Delta(\eps)^2} 
&\le C_1 \frac{n^\ell + n^{2\ell - 1} \tau^{2 \ell - 2} \lambda^{2 \ell} + n^{\ell + \frac 12} \tau \lambda^3}{n^{2\ell} \eps^2 \tau^{2\ell-2} \lambda^{2\ell+2}} \\
&\le C_1 \left( \frac{1}{n^{\ell} \eps^2 \tau^{2\ell-2} \lambda^{2\ell+2}} + \frac{1}{n \eps^2  \lambda^{2}} + \frac{1}{n^{\ell-1/2} \eps^2 \tau^{2\ell-3} \lambda^{2\ell-1}} \right) 
\end{align*}
for a constant $C_1 = C_1(\ell) > 0$.
Using the assumptions $n \tau^2 \lambda^2 = \Theta(n \tau^2 p) \ge n^\delta$, $\ell > 3/\delta$, and $\eps = \tau n^{-\delta/4}$, we can check
\begin{equation}
\frac{\Var(T \mid z_2)}{\Delta(\eps)^2}
\le 3 C_1 \, n^{-\delta/2} .
\label{eq:che-var-bd}
\end{equation}

Therefore,
$$
\int_{-\frac{\tau}{2}}^{\frac{\tau}{2}} \Pb\br{T<\kappa\cond z_2} \, \rmd z_2
\le 12 C_1 \tau n^{-\delta/2} .
$$

\item
For $\frac{\tau}{2} + \eps \le |z_2| \le \frac{(\ell+1) \tau}{2}$, again, by Proposition~\ref{prop:recovery-expectation}, Chebyshev's inequality, and \eqref{eq:che-var-bd},
$$
\Pb\br{T \ge \kappa \cond z_2}
\le \Pb\br{\abs{T-\E[T\cond z_2]}> \frac{\Delta(\epsilon)}{2} \mid z_2}
\le \frac{4 \Var(T \mid z_2)}{\Delta(\eps)^2} 
\le 12 C_1 n^{-\delta/2} .
$$
Therefore,
$$
\int_{\frac{\tau}{2} + \eps \le |z_2| \le \frac{(\ell+1) \tau}{2}} \Pb\br{T\geq \kappa\cond z_2} \, \rmd z_2
\le 12 (\ell+1) C_1 \tau n^{-\delta/2} .
$$

\item
For $\frac{(\ell+1) \tau}{2} < |z_2| \le 1$, we have $\E[T \mid z_2] = 0$ by Proposition~\ref{prop:recovery-expectation}.
Combining Chebyshev's inequality, the second bound in Proposition~\ref{prop:recovery-variance}, \eqref{eq:def-kappa}, and Proposition~\ref{prop:recovery-expectation}, we obtain
\begin{align*}
\Pb\br{T\geq \kappa\cond z_2} 
\le \frac{\Var(T\cond z_2)}{\kappa^2}
&\le C_2 \frac{n^\ell + n^{2\ell - 1} \tau^{2 \ell - 1} \lambda^{2 \ell} + n^{\ell + \frac 12} \tau^2 \lambda^3}{n^{2\ell} \tau^{2\ell} \lambda^{2\ell+2}} \\
&\le C_2 \left( \frac{1}{n^{\ell} \tau^{2\ell} \lambda^{2\ell+2}} + \frac{1}{n \tau  \lambda^{2}} + \frac{1}{n^{\ell-1/2} \tau^{2\ell} \lambda^{2\ell-1}} \right) 
\end{align*}
for $C_2 = C_2(\ell) > 0$.
Using the assumptions $n \tau^2 \lambda^2 = \Theta(n \tau^2 p) \ge n^\delta$ and $\ell > 3/\delta$, we can check
$$
\Pb\br{T\geq \kappa\cond z_2}
\le 3 C_2 \, \tau n^{-\delta/2} .
$$
Therefore,
$$
\int_{\frac{(\ell+1) \tau}{2} < |z_2| \le 1} \Pb\br{T\geq \kappa\cond z_2} \, \rmd z_2 \le 3 C_2 \, \tau n^{-\delta/2} .
$$
\end{itemize}
In summary, we have obtained $\E\kr{(\hat \chi - \chi)^2} = \Pb\br{\hat\chi \neq \chi} \le C_3 \tau n^{-\delta/2}$ for $C_3 = C_3(\ell) > 0$.
\end{proof}

\section*{Acknowledgments}
C.M.\ was supported in part by NSF grants DMS-2053333 and DMS-2210734. Part of this work was done while A.S.W.\ was with the Algorithms and Randomness Center at Georgia Tech, supported by NSF grants CCF-2007443 and CCF-2106444. S.Z.\ was supported in part by NSF grant DMS-2053333. We thank Guy Bresler, Will Perkins, and Jiaming Xu for helpful discussions. We thank the anonymous referees for constructive comments.

\bibliographystyle{alpha}
\bibliography{main}

\end{document}